\documentclass[12pt, a4paper]{article}
\usepackage{amsmath,amssymb,amsthm,amsfonts,latexsym,graphicx,subfigure}
\usepackage[all,import]{xy}
\usepackage[numbers,sort&compress]{natbib}
\usepackage{indentfirst}
\usepackage{fancyhdr}
\usepackage{bbm}
\usepackage{accents,cases}
\usepackage{multirow}
\usepackage[lined, ruled, linesnumbered, longend]{algorithm2e}

\usepackage{array}
\newcommand{\PreserveBackslash}[1]{\let\temp=\\#1\let\\=\temp}
\newcolumntype{C}[1]{>{\PreserveBackslash\centering}p{#1}}
\newcolumntype{R}[1]{>{\PreserveBackslash\raggedleft}p{#1}}
\newcolumntype{L}[1]{>{\PreserveBackslash\raggedright}p{#1}}

\setlength{\textheight}{232mm} \setlength{\textwidth}{168mm}
\setlength{\oddsidemargin}{-5mm} \setlength{\evensidemargin}{0mm}
\setlength{\topmargin}{-10mm}

\DeclareMathOperator*{\argmin}{\ensuremath{arg\,min}}

\DeclareMathOperator*{\median}{\ensuremath{median}}
\DeclareMathOperator*{\sgn}{\ensuremath{Sgn}}
\DeclareMathOperator*{\card}{\ensuremath{Card}}
\DeclareMathOperator*{\sign}{\ensuremath{sign}}
\DeclareMathOperator*{\cone}{\ensuremath{cone}}
\DeclareMathOperator*{\cen}{\ensuremath{center}}
\DeclareMathOperator*{\vol}{\ensuremath{vol}}

\makeatletter
\def\wbar{\accentset{{\cc@style\underline{\mskip8mu}}}}

\makeatother

\renewcommand{\vec}[1]{\mbox{\boldmath \small $#1$}}



\newcommand{\diag}{\mathrm{diag}}

\newcommand{\videpost}{{vide post}{}}

\newcommand{\ie}{{i.e.}{~}}
\newcommand{\eg}{{e.g.}{~}}
\newcommand{\vs}{{v.s.}{~}}

\numberwithin{equation}{section}

\theoremstyle{plain}
\newtheorem{theorem}{Theorem}

\newtheorem{defn}{Definition}

\newtheorem{lemma}{Lemma}
\newtheorem{remark}{Remark}

\newtheorem{example}{Example}

\begin{document}
\bibliographystyle{unsrt} 
\title{The $1$-Laplacian Cheeger Cut: Theory and Algorithms}
\author{K.C. Chang\footnotemark[2],
\and Sihong Shao\footnotemark[2],
\and Dong Zhang\footnotemark[2]}
\renewcommand{\thefootnote}{\fnsymbol{footnote}}
\footnotetext[2]{LMAM and School of Mathematical Sciences, Peking University, Beijing 100871, China.}
\date{}
\maketitle

\begin{abstract}
This paper presents a detailed review of both theory and algorithms for the Cheeger cut based on the graph $1$-Laplacian. In virtue of the cell structure of the feasible set, we propose a cell descend (CD) framework for achieving the Cheeger cut. While plugging the relaxation to guarantee
the decrease of the objective value in the feasible set, from which both the inverse power (IP) method and the steepest descent (SD) method can also be recovered, we are able to get two specified CD methods. A compare study of all these methods are conducted on several typical graphs.
\end{abstract}

\section{Introduction}

Graph cut, partitioning the vertices of a graph into two or more disjoint subsets,
is a fundamental problem in graph theory \cite{Chung1997}.
It is a very powerful tool in data clustering with wide applications ranging from statistics, computer learning, image processing,  biology to social sciences \cite{JainMurtyFlynn1999}.
There exist several kinds of balanced graph cut \cite{ShiMalik2000,HeinSetzer2011,BuhlerRangapuramSetzerHein2013}.
The Cheeger cut \cite{Cheeger1970}, which has recently been shown to provide excellent classification results~\cite{SzlamBresson2010,HeinBuhler2010,BressonLaurentUminskyBrecht2013}, is one of them and its definition is as follows. Let $G=(V,E)$ denote a undirected and unweighted graph with vertex set $V=\{1,2,\cdots,n\}$ and edge set $E$. Each edge $e\in E$ is a pair of vertices
$\{i,j\}$. For any vertex $i$, the degree of $i$, denoted by $d_i$, is defined to be the number of edges passing through $i$. Let $S$ and $T$ be two nonempty subsets of $V$ and use
\[
E(S,T)=\left\{\{i,j\}\in E:i\in S,j\in T\right\}
\]
to denote the set of edges between $S$ and $T$. The edge boundary of $S$ is $\partial S = E(S,S^c)$ ($S^c$ is the complement of $S$ in $V$) and
the volume of $S$ is defined to be $\vol(S): =\sum_{i\in S}d_i.$ The number
\[
h(G)=\min\limits_{S\subset V,S\not\in\{\emptyset,V\}} \frac{|\partial S|}{\min\{\vol(S),\vol(S^c)\}}
\]
is called the {\sl{Cheeger constant}}, and a partition $(S,S^c)$ of $V$ is called a {\sl{Cheeger cut}} of $G$ if
\[
\frac{|\partial S|}{\min\{\vol(S),\vol(S^c)\}}=h(G),
\]
where $|\partial S|$ is the cardinality of the set $\partial S$.

However, solving analytically the Cheeger cut problem is combinatorially NP-hard \cite{SzlamBresson2010}. Approximate solutions are required.
The most well-known approach to approximate the Cheeger cut solutions is the spectral clustering method, which relaxes the original discrete combination optimization problem into a continuous function optimization problem
through the graph Laplacian \cite{Luxburg2007}. The standard graph Laplacian (\ie the $2$-Laplacian) is defined as $L:=D-A$, where $D=\diag(d_1,\cdots,d_n)$ is a diagonal matrix and $A$ the adjacency matrix of $G$. According to the linear spectral graph thoery, the eigenvalues of $L$ satisfy $0=\lambda_1\le \lambda_2\le \cdots\le \lambda_n\le 2$ and the second eigenvalue $\lambda_2$ can be used to bound the Cheeger constant as follows
\begin{equation}
\label{eq:mu_2&lambda_2}
\frac{\lambda_2}{2}\leq h(G) \leq \sqrt{2\lambda_2},
\end{equation}
which is nothing but the Cheeger inequality \cite{Chung1997}. Furthermore,
the corresponding second eigenvector is also used to approximate the Cheeger cut, \ie the $2$-spectral clustering or the $\ell^2$ relaxation.
It should be noted that this second eigenvector is not the Cheeger cut, but only an approximation \cite{Luxburg2007}.

In order to achieve a better cut than the $\ell^2$ relaxation, a spectral clustering based on the graph $p$-Laplacian defined by
$$(\Delta_p\vec x)_i=\sum_{j\sim i} |x_i-x_j|^{p-1}\sign(x_i-x_j)$$
with small $p\in(1,2)$ was proposed, in view of the fact that
the cut by threshold the second eigenvector of the graph $p$-Laplacian
tends to the Cheeger cut as $p\rightarrow 1^+$~\cite{BuhlerHein2009}.
Here $j\sim i$ denotes vertex $j$ is adjacent to vertex $i$, $\sum_{j\sim i}$ means the summation is with respect to all vertices adjacent to vertex $i$,
and $\sign(t)$ is the standard sign function which
equals to $1$ if $t>0$, $0$ if $t=0$, and $-1$ if $t<0$.
The resulting $\ell^p$ relaxation
\begin{equation}
\label{eq:p_lap}
\frac{\sum_{i\sim j}|x_i-x_j|^p}{\sum_{i=1}^nd_i|x_i|^p}
\end{equation}
is differentiable but nonconvex, so that standard Newton-like methods can be applied, but only local minimizers are obtained. In actual calculations, multiple runs with random initializations are taken to approximate the global minimizer.

All above mentioned $p$-spectral clustering for any $p\in(1,2]$ are {\sl{indirect}} methods.

Since the second (the first non-zero) eigenvalue of $1$-Laplacian (see Definition~\ref{defn:1_lap}) for connected graphs equals to the Cheeger constant, and the corresponding eigenvectors provide exact solutions of the Cheeger cut problem~\cite{SzlamBresson2010,Chang2014}. We study the numerical solution of the second eigenvector of the graph $1$-Laplacian.

However, the $\ell^1$ nonlinear eigenvalue problem (the corresponding object function is obtained by setting $p=1$ in Eq.~\eqref{eq:p_lap}) is not only nonconvex but also nondifferentiable. Three types of algorithms have been proposed to minimize the $1$-spectral clustering problem. They are: the Split-Bregman like ratio minimization algorithm~\cite{SzlamBresson2010},
the inverse power (IP) method~\cite{HeinBuhler2010},
and the steepest descent (SD) algorithm~\cite{BressonLaurentUminskyBrecht2012}.
Unfortunately, all these methods fail to give global minimizers.

Motivated by a recently developed nonlinear spectral graph theory of $1$-Laplacian~\cite{Chang2014} by the first author of this paper, we propose a cell descend (CD) algorithm framework. The main idea is based on the fact that the feasible set consists of a collection of cells, and the objective function on
each cell is convex. At each step, one obtains the minimum on the initial cell, and finding out a suitable direction transfer to a new cell by descending the value of the objective function.
This CD algorithm framework combines the advantages of both the original discrete combination optimization
and its equivalent continuous optimization.

Preliminary numerical results on several typical graphs
demonstrate that the proposed CD algorithm framework could provide better cut than the IP and SD methods. In order to provide a solid theoretical foundation for algorithms, we also present a unified framework for designing algorithms for the $1$-Laplacian Cheeger cut problem, in which both IP and SD algorithms can be easily recovered but from a totally different angle.


The paper is organized as follows. In the rest of this section we list the notations, definitions and some known results.
 A brief review of the spectral theory of $1$-Laplacian and Cheeger cut is in Section~\ref{sec:theory}. A new CD framework for solving the Cheeger cut problem is proposed in Section~\ref{sec:alg}. The corresponding
numerical experiments on several typical graphs are presented with discussions in Section~\ref{sec:num}. The conclusion with a few remarks is presented in Section~\ref{sec:con}.

\subsection{Notations and definitions}

In the following, we list some notations which will be used in this paper.
\begin{itemize}
\item  To the edge $e\in E$, we assign an orientation, let $i$  be the head, and $j$ be the tail, they are denoted by $i= e_h,$ and $j= e_t$ respectively. Under this orientation, one defines the incidence matrix $B=(b_{ei})_{l\times n}$ where
\begin{equation}
b_{ei}=
\begin{cases}
1,  & \text{if} \;\;\; i=e_h,\\
-1, & \text{if} \;\;\; i=e_t,\\
0,  & \text{if} \;\;\; i\notin e,
\end{cases}
\end{equation}
with $e\in E$, $i\in V$ and $l$ being the number of edges in $E$.


\item For any $\vec x\in \mathbb{R}^n$,
we denote
\begin{align}
D_+(\vec x) &=\{i\in V|x_i>0\}, \quad
D_0(\vec x) =\{i\in V|x_i=0\}, \quad
D_-(\vec x) =\{i\in V|x_i<0\}, \\
\delta_\pm(\vec x) &=\sum_{i\in D_\pm(\vec x)}d_i, \quad
\delta_0(\vec x) =\sum_{i\in D_0(\vec x)}d_i, \quad \|\vec x\|_2 = \left(\sum_{i=1}^n x_i^2\right)^\frac12,\\
I(\vec x) &= \frac12 \sum_{i,j=1}^n a_{ij}|x_i-x_j|, \quad
\|\vec x\|_{1,d}  =\sum_{i=1}^nd_i|x_i|, \quad
F(\vec x) =\frac{I(\vec x)}{\|\vec x\|_{1,d}}. \label{eq:I1F}
\end{align}
\item Let
\begin{align}
X &=\{\vec x\in \mathbb{R}^n: \|\vec x\|_{1,d}=1\}, \label{eq:X}\\
\pi &=\{\vec x\in X: |\delta_+(\vec x)-\delta_-(\vec x)|\le \delta_0(\vec x)\}. \label{eq:pi_1}
\end{align}
\item For given $S\subset \mathbb{R}^n$,
define
\begin{equation}
\cone(S)=\{k\vec x:\vec x\in S,k>0\}.
\end{equation}
\item Given a subset $T\subset X$, let $\vec x, \vec y \in T$, we say that $\vec x$ is equivalent to $\vec y$ in $T$, denoted by $\vec x\simeq \vec y$ in $T$, if there is a path $\gamma$ connecting $\vec x$ and $\vec y$ in $T$, \ie $\exists$ a continuous $\gamma: [0, 1]\to T$ such that $\gamma(0)=\vec x$, $\gamma(1)=\vec y$.
\end{itemize}

\begin{defn}
\label{defn:1_lap}
Given a graph $G=(V,E)$, for any $\vec x\in\mathbb{R}^n$, the set valued map:
$$\Delta_1: \vec x \to \{B^T \vec z | \vec z: E\to \mathbb{R}^1\; \text{is a}\;\mathbb{R}^l\;\text{vector satisfying}\;z_e(B\vec x)_e=|(B\vec x)_e|,\;\forall\;e\in E\}$$
is called the graph $1$-Laplacian on $G$.
Namely, it can be rewritten as
\begin{equation}\label{eq:l-Lap}
\Delta_1 \vec x = B^T \sgn (B\vec x),
\end{equation}
where $\sgn: \mathbb{R}^n\to (2^\mathbb{R})^n$ is a set valued mapping:
\[
\sgn(\vec y) = (\sgn(y_1),\sgn(y_2),\cdots,\sgn(y_n)), \quad \forall \vec y=(y_1,y_2,\cdots,y_n)\in\mathbb{R}^n,
\]
and
\[
\sgn(t)=
\begin{cases}
1,  & \text{if} \;\;\; t>0,\\
-1, & \text{if} \;\;\; t<0,\\
[-1,1],  & \text{if} \;\;\; t=0.
\end{cases}
\]
\end{defn}

For convenience, we often write Eq.~\eqref{eq:l-Lap} in the coordinate form for $i=1,2,\cdots,n$:
\begin{equation}\label{eq:1-Lap-i}
\begin{aligned}
(\Delta_1 \vec x)_i &= (B^T \sgn(B\vec x))_i\\
&= \left\{\left.\sum_{j\sim i} z_{ij}(\vec x)\right|z_{ij}(\vec x)\in \sgn(x_i-x_j),\; z_{ji}(\vec x)=-z_{ij}(\vec x),\; \forall j\sim i\right\}.
\end{aligned}
\end{equation}
 Obviously, from Eq.~\eqref{eq:1-Lap-i}, the graph $1$-Laplacian $\Delta_1$ is a nonlinear set valued mapping, which is independent to the special choice of orientation.

\begin{defn} Given a graph $G=(V,E)$, a pair $(\mu, \vec x)\in \mathbb{R}^1\times X$ is called an eigenpair of the $1$-Laplacian $\Delta_1$ on $G$ if
\begin{equation}\label{eq:pair}
\mu D \sgn(\vec x)\bigcap \Delta_1 \vec x\neq \varnothing.
\end{equation}
In the coordinate form, Eq.~\eqref{eq:pair} is equivalent to the system: $\exists z_{ij}(\vec x)\in \sgn(x_i-x_j)$ satisfying $z_{ji}(\vec x)=-z_{ij}(\vec x),\forall j\sim i$, and
\begin{equation}
\sum_{j\sim i} z_{ij}(\vec x)\in \mu d_i \sgn(x_i),\quad i=1,2,\cdots, n.
\end{equation}
The set of all solutions of Eq.~\eqref{eq:pair} is denoted by $S(G)$.
\end{defn}

\begin{remark}\label{remark:cut}
According to Theorem~\ref{th:chang2014} (\videpost),
one can choose either $S=D_+(\vec x)$ or $S=D_-(\vec x)$ based on an eigenvector $\vec x$ with respect to the second eigenvalue to produce a Cheeger cut $(S,S^c)$.
\end{remark}

\begin{defn}
Given a graph $G=(V,E)$ and $\vec x\in \mathbb{R}^n$, let $\sigma:\{1,2,\cdots,n\}\to\{1,2,\cdots,n\} $ be a permutation satisfying
$x_{\sigma(1)}\le x_{\sigma(2)}\le \cdots \le x_{\sigma(n)}$.
Then there exists unique $k_0\in \{1,\cdots,n\}$ such that $$-\sum_{i=1}^n d_{\sigma(i)}<\cdots <\sum_{i=1}^{k_0-1}d_{\sigma(i)}-\sum_{i=k_0}^nd_{\sigma(i)}< 0\le \sum_{i=1}^{k_0}d_{\sigma(i)}-\sum_{i=k_0+1}^nd_{\sigma(i)} <\cdots<\sum_{i=1}^n d_{\sigma(i)} .$$
The set
\begin{equation*}
\begin{cases}
[x_{\sigma(k_0)},x_{\sigma(k_0+1)}], & \text{if}\;\sum_{i=1}^{k_0}d_{\sigma(i)}-\sum_{i=k_0+1}^n d_{\sigma(i)} =0,\\
\{ x_{\sigma(k_0)}\}, & \text{if}\; \sum_{i=1}^{k_0}d_{\sigma(i)}-\sum_{i=k_0+1}^n d_{\sigma(i)} >0,
\end{cases}
\end{equation*} is said to be the {median} of $\vec x$, denoted by $\median(\vec x)$.
\end{defn}

\begin{defn}
Let $m:\{1,\cdots,n\}\to \{-1,0,1\}$ be a nonzero map. 
The set
\begin{align*}
\triangle_m:=\{\vec x\in X: \sum_{i=1}^nd_im(i)x_i=1\text{ with }m(i)x_i> 0\text{ for }m(i)\ne 0\}
\end{align*}
is called a cell.
\end{defn}

We have several remarks about the cell as follows.
\begin{itemize}
\item $\triangle_m$ are piecewise linear manifolds. The center of gravity of $\triangle_m$ is
$$
\cen(\triangle_m) =  \frac{1}{\delta}\sum_{i=1}^n d_i m(i) \vec{e}_i.
$$
The dimension of $\triangle_m$ is
$$\dim \triangle_m =\card\{m(i):m(i)\ne 0\}-1,$$
and the total number of $k$-dimensional cells is $C_n^{k+1}2^{k+1}$ for $k=0,1,\cdots,n-1$.
\item It is evident that $\sign(\vec x):=(\sign(x_1), \cdots, \sign(x_n))=m$ if and only if $\vec x\in\triangle_m$. Consequently, the set of all cells having nonempty intersection with $\pi$ can be denoted by $\Pi$:
\begin{equation}
\Pi=\{\triangle_m:\exists \vec x\in \pi\; \text{such that}\; \sign(\vec x)=m\}.
\end{equation}

\item Hereafter, we sometimes use $[i \cdot m(i):m(i)\ne 0]$ to denote the cell $\triangle_m$ for simplicity.
\end{itemize}



\section{Theory of $1$-Laplacian Cheeger cut}
\label{sec:theory}


In this section, we present a detailed review of the theory of $1$-Laplacian Cheeger cut, including the spectrum of $\Delta_1$,
the property of the feasible set and the connection between
the Cheeger constant $h(G)$ and the second eigenvalue $\mu_2$ of $\Delta_1$.

\subsection{Spectrum of $\Delta_1$}


The function $I(\vec x)$ defined in Eq.~\eqref{eq:I1F} is Lipschitzian on $\mathbb{R}^n$. Let $\tilde{I}=I|_X$. A characterization of the subgradient vector field on $X$ has been studied in \cite{Chang2014}, by which one proves

\begin{theorem}[Theorem 4.11 in \cite{Chang2014}]
\label{th:S(G)=K}
 $\vec x\in S(G)$ if and only if $\vec x$ is a critical point of $\tilde{I}$.
\end{theorem}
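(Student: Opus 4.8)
The plan is to treat $\tilde I=I|_X$ as the objective of a constrained nonsmooth problem on $X=\{\vec x:\|\vec x\|_{1,d}=1\}$, to write out the Clarke critical point (nonsmooth Lagrange multiplier) condition for it, and to match that condition term by term with the eigenpair equation~\eqref{eq:pair}. The first observation is that $I$ is a composition of the $\ell^1$ norm with the linear incidence map: since each edge $e=\{i,j\}$ contributes $a_{ij}|x_i-x_j|+a_{ji}|x_j-x_i|=2|(B\vec x)_e|$ to the double sum, $I(\vec x)=\sum_{e\in E}|(B\vec x)_e|=\|B\vec x\|_1$, while $\|\vec x\|_{1,d}=\sum_i d_i|x_i|$ is a weighted $\ell^1$ norm. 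Both functions are convex, positively homogeneous of degree one and globally Lipschitz, so their Clarke generalized gradients coincide with the convex subdifferentials.

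Next I would compute these subdifferentials exactly. The chain rule for a convex function precomposed with the linear map $B$ (no constraint qualification is needed, as $\|\cdot\|_1$ has full domain) gives $\partial I(\vec x)=B^T\partial\|\cdot\|_1(B\vec x)=B^T\sgn(B\vec x)=\Delta_1\vec x$, which in coordinates is precisely~\eqref{eq:1-Lap-i}; likewise $\partial\|\cdot\|_{1,d}(\vec x)=D\sgn(\vec x)$. On $X$ one has $\vec x\neq\vec 0$, hence some $x_i\neq 0$, so the $i$-th component of every element of $D\sgn(\vec x)$ equals $\pm d_i\neq 0$; therefore $\vec 0\notin\partial\|\cdot\|_{1,d}(\vec x)$, i.e.\ the constraint defining $X$ is nondegenerate at each of its points.

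The third step is the nonsmooth Fermat/Lagrange rule: because the constraint map is Lipschitz, positively homogeneous and nondegenerate on $X$, a point $\vec x\in X$ is a critical point of $\tilde I$ if and only if there is $\mu\in\mathbb{R}$ with $\vec 0\in\partial I(\vec x)-\mu\,\partial\|\cdot\|_{1,d}(\vec x)$, that is, $\mu D\sgn(\vec x)\cap\Delta_1\vec x\neq\varnothing$ — which is exactly~\eqref{eq:pair}, so $\vec x\in S(G)$; conversely, a solution of~\eqref{eq:pair} furnishes such a $\mu$ and hence is critical. An equivalent route, avoiding an explicit multiplier, uses that $F=I/\|\cdot\|_{1,d}$ is locally Lipschitz and zero-homogeneous on $\mathbb{R}^n\setminus\{\vec 0\}$: the rays meet $X$ transversally, so critical points of $\tilde I$ are exactly those of $F$, and the quotient rule together with $\|\vec x\|_{1,d}=1$ and $F(\vec x)=I(\vec x)=\mu$ yields the same inclusion.

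The main obstacle is making ``critical point of $\tilde I$'' rigorous on $X$ and proving that the inclusion above is \emph{both necessary and sufficient}: $X$ is not a smooth manifold but a finite union of linear cells $\triangle_m$, so one cannot simply differentiate in charts, and one must rule out spurious Lagrange multipliers. This is where the characterization of the subgradient vector field of $\tilde I$ on $X$ from \cite{Chang2014} is essential — it describes the generalized gradient of $I$ restricted to each cell and its faces and pins down its zeros; with that structural description, necessity is the Fermat rule on $X$, while sufficiency (and the upgrade of the quotient-rule inclusion to the equality needed in the alternative route) follows from the polyhedral convexity of $I$ and $\|\cdot\|_{1,d}$.
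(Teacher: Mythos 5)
The paper does not actually prove this statement: it is imported verbatim as Theorem 4.11 of \cite{Chang2014}, with only the remark that it follows from a characterization of the subgradient vector field on $X$ developed there. So your proposal must stand on its own, and it has a genuine gap --- one the paper itself flags. Your preliminary computations are fine: $I(\vec x)=\|B\vec x\|_1$, $\partial I(\vec x)=B^T\sgn(B\vec x)=\Delta_1\vec x$ by the exact convex chain rule, and $\partial\|\cdot\|_{1,d}(\vec x)=D\sgn(\vec x)$ with $\vec 0\notin D\sgn(\vec x)$ on $X$. The problem is your third step, where you assert that ``$\vec x$ is a critical point of $\tilde I$'' is \emph{equivalent} to the existence of a multiplier $\mu$ with $\vec 0\in\partial I(\vec x)-\mu\,\partial\|\cdot\|_{1,d}(\vec x)$, and especially your ``equivalent route'' identifying critical points of $\tilde I$ with Clarke critical points of $F=I/\|\cdot\|_{1,d}$. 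That identification is demonstrably false here: the remark immediately following this theorem, together with Appendix A, exhibits an eigenvector $\vec x_0$ of $\Delta_1$ (hence a critical point of $\tilde I$ by the theorem) at which the Clarke derivative of $F$ in the direction $\vec h_0$ is bounded above by $-2$, so $\vec x_0$ is \emph{not} a Clarke critical point of $F$. Clarke critical points of $F$ form a strict subset of $S(G)$; any argument routed through them can only deliver the inclusion ``critical $\Rightarrow$ eigenvector,'' never its converse.

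The same difficulty reappears, in disguise, in your primary route. A nonsmooth Fermat/Lagrange rule gives that criticality in the usual (Clarke-type) sense \emph{implies} the multiplier inclusion; the reverse implication --- that every $\vec x$ with $\mu D\sgn(\vec x)\cap\Delta_1\vec x\neq\varnothing$ is a critical point of $\tilde I$ --- is precisely the direction that fails for the Clarke notion and is the real content of the theorem. It holds only because Definition 4.4 of \cite{Chang2014} adopts a deliberately weaker, bespoke notion of critical point built from an explicit construction of the subgradient vector field of $\tilde I$ on the piecewise-linear set $X$, analyzed cell by cell. You acknowledge in your last paragraph that this characterization is ``essential,'' but you neither state that definition nor verify either implication against it; as written, the ``$\vec x\in S(G)\Rightarrow$ critical'' direction is unproved, and the alternative argument you offer for it is contradicted by the paper's own counterexample.
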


\begin{remark}
It must be pointed out that,
the definition (see Definition 4.4 in \cite{Chang2014}) of the critical point of $\tilde{I}$ adopted in Theorem~\ref{th:S(G)=K} is different from the usual one which studies the critical point of $\frac{I(\cdot)}{\|\cdot\|_{1,d}}$ through the so-called Clarke derivative \cite{HeinBuhler2010}. In fact, the critical points of $\frac{I(\cdot)}{\|\cdot\|_{1,d}}$ in the sense of the Clarke derivative must be the eigenvectors of $\Delta_1$,
but the inverse is not true. To clarify this, we present an example in Appendix A.
\end{remark}

Let $K$ denote the set of critical points of $\tilde{I}$, and let $K_c$ denote the subset of $K$ with critical value $c$.

The Liusternik-Schnirelmann theory is extended to study the multiplicity of the critical points for the even function $\tilde{I}$. The notion of genus due to Krasnoselski is introduced, see for instance, \cite{Chang1985,Rabinowitz1986}.
Let $T\subset\mathbb{R}^n\backslash \{0\}$ be a symmetric set, \ie $-T=T$ satisfying $0\notin T$. An integer valued function, which is called the genus of $T$, $\gamma: T \to \mathbb{Z}^+$ is defined to be:
\begin{equation*}
\gamma(T) =
\begin{cases}
0, & \text{if}\; T=\varnothing,\\
\min\limits\{k\in\mathbb{Z}^+: \exists\; \text{odd continuous}\; h: T\to S^{k-1}\}, & \text{otherwise}.
\end{cases}
\end{equation*}
Obviously, the genus is a topological invariant.

Let us define
\begin{equation}
c_k=\inf_{\gamma(T)\ge k} \max_{\vec x\in T\subset X} I(\vec x),\quad k=1,2, \cdots n.
\end{equation}
It can be proved that these $c_k$ are critical values of $\tilde{I}$. One has
$$c_1\le c_2 \le \cdots \le c_n,$$
and if
\begin{equation}
c=c_{k+1}=\cdots =c_{k+l},\,\, 0\le k\le k+l \le n,
\end{equation}
then $\gamma(K_c)\ge l.$

A critical value $c$ is said of multiplicity $l$, if $\gamma(K_c)=l$.

\begin{theorem}[Theorem 4.10 in \cite{Chang2014}] There are at least $n$ critical points $\phi_k,\,\,k=1,2, \cdots, n$ of $\tilde{I}$ such that
$\phi_k\in K_{c_k}$. Moreover, counting multiplicity, $\tilde{I}$ has at least $n$ critical values.
\end{theorem}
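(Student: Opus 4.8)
The plan is to combine the standard Liusternik--Schnirelmann (LS) minimax machinery with the deformation/compactness properties of $\tilde I$ on $X$ that were established in \cite{Chang2014}. First I would record that $X$ is compact and that $\tilde I = I|_X$ is an even, Lipschitz, nonnegative function, so that each $c_k = \inf_{\gamma(T)\ge k}\max_{\vec x\in T}I(\vec x)$ is finite and the chain of inequalities $c_1\le c_2\le\cdots\le c_n$ holds trivially from monotonicity of the class of admissible symmetric sets $T$ (a set with $\gamma(T)\ge k+1$ also has $\gamma(T)\ge k$). The substantive point is that each $c_k$ is actually attained as a critical value of $\tilde I$; for this I would invoke the nonsmooth deformation lemma already available for $\tilde I$ (the generalized first deformation theorem for the subgradient flow on the piecewise-linear manifold $X$, as set up in the cited reference), together with the fact that the genus is invariant under odd homeomorphisms and does not increase under odd continuous maps, and is continuous in the sense that $\gamma(\overline{N_\delta(K_c)})=\gamma(K_c)$ for a suitable symmetric neighborhood. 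The usual argument then runs: if $c=c_k$ were a regular value, one deforms a nearly-optimal symmetric set $T$ with $\gamma(T)\ge k$ below level $c-\varepsilon$, contradicting the definition of $c_k$; if $c$ is a critical value but $\gamma(K_c)=0$ is impossible since $K_c\ne\varnothing$, so $K_c\ne\varnothing$ and $c$ is a genuine critical value.

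Next I would handle multiplicity. Suppose $c = c_{k+1}=\cdots=c_{k+l}$. The standard LS argument shows $\gamma(K_c)\ge l$: if $\gamma(K_c)\le l-1$, pick a symmetric neighborhood $N$ of $K_c$ with $\gamma(N)=\gamma(K_c)\le l-1$, take $T$ with $\gamma(T)\ge k+l$ and $\max_T I< c+\varepsilon$, deform $T\setminus N$ into $\{I\le c-\varepsilon\}$, and use subadditivity of the genus, $\gamma(T)\le \gamma(\overline{T\setminus N}) + \gamma(N)$, to conclude $\gamma(\overline{T\setminus N})\ge k+1$; but $\overline{T\setminus N}$ after deformation lies in $\{I\le c-\varepsilon\}$, contradicting $c_{k+1}=c$. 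Hence distinct critical values are separated, and counting each critical value with its multiplicity $\gamma(K_c)$ the total count is at least $n$; picking one $\phi_k\in K_{c_k}$ for each $k$ (when $c_k=c_{k+1}$ one still selects distinct points out of the set $K_{c_k}$, whose genus is large enough to contain at least that many points) yields $n$ critical points $\phi_k\in K_{c_k}$. By Theorem~\ref{th:S(G)=K} each such $\phi_k$ is an element of $S(G)$, i.e.\ an eigenvector of $\Delta_1$, which is the geometric content of the statement.

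The main obstacle I anticipate is not the abstract LS scheme but verifying its hypotheses in the nonsmooth, piecewise-linear setting: namely, that the subgradient vector field of $\tilde I$ on $X$ admits a deformation lemma (existence of a locally Lipschitz, odd pseudo-gradient-type flow that strictly decreases $\tilde I$ away from $K$, and that respects the cell stratification of $X$), and that a ``$\gamma$-stable'' neighborhood of $K_c$ exists despite $K_c$ possibly being a union of lower-dimensional faces of cells. Since the statement is quoted as Theorem 4.10 of \cite{Chang2014}, I would cite the deformation and compactness results proved there rather than reprove them; the remaining work is then the purely topological bookkeeping with the genus sketched above. A secondary, minor point to be careful about is the case where some $c_k$ coincide: one must ensure the selected $\phi_k$ can be taken so that $\phi_k\in K_{c_k}$ for every $k$ simultaneously, which follows because $\gamma(K_{c})\ge l$ forces $K_c$ to be an infinite set (a set of positive genus cannot be finite), so there is no shortage of points to assign.
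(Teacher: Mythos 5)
Your proposal follows exactly the Liusternik--Schnirelmann/genus minimax route that the paper itself sketches around this statement; the paper offers no proof of its own (it quotes the result as Theorem 4.10 of the cited reference and only sets up the genus and the minimax values $c_k$), so your outline --- deformation lemma for the subgradient flow, attainment of each $c_k$ as a critical value, subadditivity and continuity of the genus for the multiplicity estimate $\gamma(K_c)\ge l$ --- is the intended argument. One minor imprecision: a nonempty finite symmetric set has genus exactly $1$, so ``a set of positive genus cannot be finite'' is false as stated, but in the only case where you invoke it ($\gamma(K_c)\ge l\ge 2$) the conclusion that $K_c$ is infinite does hold, so the selection of distinct $\phi_k$ goes through.
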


\begin{theorem}[Corollary 5.5 in \cite{Chang2014}] If a graph $G$ consists of $r$ connected components $G_1, G_2, \cdots, G_r$, then the eigenvalue $\mu=0$ has multiplicity $r$, \ie all the eigenvectors with respect to $\mu=0$ form a critical set $K_0$ with $\gamma(K_0)=r$.
\end{theorem}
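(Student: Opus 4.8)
**Proof proposal for "If a graph $G$ consists of $r$ connected components $G_1,\dots,G_r$, then the eigenvalue $\mu=0$ has multiplicity $r$."**

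The plan is to identify the zero-eigenspace explicitly and then compute its genus. First I would characterize the eigenvectors belonging to $\mu = 0$. By the definition of eigenpair, $(\mathbf 0, \vec x)$ is an eigenpair precisely when there exist $z_{ij}(\vec x)\in\sgn(x_i-x_j)$ with $z_{ji}=-z_{ij}$ such that $\sum_{j\sim i} z_{ij}(\vec x) = 0$ for every $i$ (the right-hand side $0\cdot d_i\sgn(x_i)\ni 0$ imposes no constraint beyond this). Equivalently, using Theorem~\ref{th:S(G)=K}, $\vec x\in K_0$ iff $\vec x$ is a critical point of $\tilde I$ with $I(\vec x)=0$, and since $I(\vec x) = \frac12\sum a_{ij}|x_i-x_j|\ge 0$ with equality iff $x_i = x_j$ whenever $i\sim j$, we get $I(\vec x)=0$ iff $\vec x$ is constant on each connected component $G_1,\dots,G_r$. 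The key observation is that $K_0 = \{\vec x\in X : I(\vec x)=0\}$: one inclusion is immediate from $I\ge 0$ (a zero of a nonnegative Lipschitz function is a minimum, hence a critical point), and the other from the fact that $c_1 = \inf_{\gamma(T)\ge1}\max I = 0$ so the smallest critical value is $0$.

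Next I would set up the genus computation. Let $W = \{\vec x\in\mathbb R^n : \vec x \text{ is constant on each } G_s\}$, an $r$-dimensional linear subspace, and $K_0 = W\cap X$ where $X = \{\|\vec x\|_{1,d}=1\}$. The set $K_0$ is the unit sphere (in the $\|\cdot\|_{1,d}$ norm restricted to $W$) of an $r$-dimensional normed space, hence homeomorphic to $S^{r-1}$ via a radial (odd) homeomorphism. To show $\gamma(K_0)=r$: for the upper bound $\gamma(K_0)\le r$, the radial homeomorphism $K_0\to S^{r-1}$ is odd and continuous, so by definition $\gamma(K_0)\le r$. For the lower bound $\gamma(K_0)\ge r$, I would invoke the standard Borsuk–Ulam property of genus (as recorded in \cite{Chang1985,Rabinowitz1986}): if $K_0$ were covered by an odd continuous map into $S^{r-2}$, composing with the homeomorphism $S^{r-1}\cong K_0$ would give an odd continuous map $S^{r-1}\to S^{r-2}$, contradicting Borsuk–Ulam. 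Hence $\gamma(K_0)=r$.

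Finally, I would tie this to the notion of multiplicity. Since $0 = c_1 \le c_2 \le \cdots$, and $\gamma(K_0)=r$, I need $c_r = 0 < c_{r+1}$, i.e. exactly $r$ of the $c_k$ equal $0$. The inequality $c_r \le \max_{\vec x\in K_0} I(\vec x) = 0$ follows by taking the test set $T = K_0$, which has genus $r$. For $c_{r+1}>0$: any symmetric $T\subset X$ with $\gamma(T)\ge r+1$ cannot lie entirely in $\{I=0\}=K_0$ (else $\gamma(T)\le\gamma(K_0)=r$, contradiction), so $\max_T I > 0$; a compactness/continuity argument gives a uniform positive lower bound over all such $T$, yielding $c_{r+1}>0$. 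Therefore the critical value $0$ has multiplicity exactly $\gamma(K_0)=r$, which is the assertion.

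The main obstacle I anticipate is the clean verification that $K_0$ equals $\{I=0\}\cap X$ rather than merely containing it — one must rule out "spurious" critical points of $\tilde I$ at height $0$, which requires carefully invoking the characterization of critical points from \cite{Chang2014} (Definition 4.4 there) and the nonnegativity of $I$; and secondarily, making the passage from $c_{r+1}>0$ rigorous, which needs the upper semicontinuity of $T\mapsto\max_T I$ together with the deformation properties underlying the Liusternik–Schnirelmann machinery already cited in the excerpt.
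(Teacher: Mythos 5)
The paper does not prove this statement: it is imported verbatim as Corollary~5.5 of \cite{Chang2014}, so there is no in-paper proof to compare against. Your reconstruction is correct and is the standard argument one would expect: identify $K_0$ with $W\cap X$, where $W$ is the $r$-dimensional subspace of vectors constant on each component, then compute $\gamma(W\cap X)=r$ by the odd radial homeomorphism onto $S^{r-1}$ together with Borsuk--Ulam. Two small remarks. First, the inclusion $\{I=0\}\cap X\subseteq K_0$ that you worry about at the end is easier than you suggest: if $I(\vec x)=0$ then $x_i=x_j$ for every edge $i\sim j$, so $0\in\sgn(x_i-x_j)$ and the choice $z_{ij}\equiv 0$ exhibits $(0,\vec x)$ as an eigenpair directly from the definition; combined with Theorem~\ref{th:S(G)=K} and the fact (Theorem~\ref{properties}(3)) that an eigenpair $(\mu,\vec x)$ forces $I(\vec x)=\mu$, this gives $K_0=\{I=0\}\cap X$ with no appeal to ``a zero of a nonnegative function is a critical point.'' Second, your final step establishing $c_r=0<c_{r+1}$ is not required by the statement as the paper defines multiplicity (namely, multiplicity of $c$ \emph{is} $\gamma(K_c)$), and the compactness/uniform-lower-bound argument you sketch for $c_{r+1}>0$ is the one genuinely underdeveloped point in your write-up; if you do want that conclusion, it follows at once from the already-quoted Liusternik--Schnirelmann fact that $c_{k+1}=\cdots=c_{k+l}=c$ implies $\gamma(K_c)\ge l$: were $c_{r+1}=0$ one would get $\gamma(K_0)\ge r+1$, contradicting $\gamma(K_0)=r$. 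Finally, note that your ``unit sphere of an $r$-dimensional normed space'' step tacitly uses $d_i>0$ for all $i$ (no isolated vertices), which is the standing assumption needed for $\|\cdot\|_{1,d}$ to be a norm.
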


\begin{lemma}
Assume that $\vec x,\vec y \in X$ satisfy:
\begin{description}
\item(P1) sign preserving property: $x_i=0\Rightarrow y_i=0$; $x_i<0\Rightarrow y_i\le0$; $x_i>0\Rightarrow y_i\ge0$,
\item(P2) order preserving property: $x_i=x_j\Rightarrow y_i=y_j$; $x_i<x_j\Rightarrow y_i\le y_j$,
\end{description}
$\forall\, i,j\in \{1,2,\cdots,n\}$. If $(\mu,\vec x)$ is an eigenpair of $\Delta_1$, then $(\mu,t\vec x+(1-t)\vec y)$ is also an eigenpair of $\Delta_1,\,\forall\,t\in [0,1]$.
\end{lemma}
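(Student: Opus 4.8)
The plan is to reuse the ``dual certificate'' that witnesses $(\mu,\vec x)$ as an eigenpair as a certificate for every point of the segment $[\vec x,\vec y]$. Since $(\mu,\vec x)$ is an eigenpair, the coordinate form~\eqref{eq:pair} supplies numbers $z_{ij}\in\sgn(x_i-x_j)$ with $z_{ji}=-z_{ij}$ for all $j\sim i$, satisfying
\[
\sum_{j\sim i}z_{ij}\in\mu d_i\,\sgn(x_i),\qquad i=1,\dots,n.
\]
Fixing $t\in[0,1]$ and writing $\vec w=t\vec x+(1-t)\vec y$, I would show that the \emph{same} family $(z_{ij})$ witnesses $(\mu,\vec w)$; the antisymmetry $z_{ji}=-z_{ij}$ is then inherited verbatim, so the task reduces to three verifications: (i) $\vec w\in X$; (ii) $z_{ij}\in\sgn(w_i-w_j)$ for every $j\sim i$; and (iii) $\sum_{j\sim i}z_{ij}\in\mu d_i\,\sgn(w_i)$ for every $i$.

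First, for (i), property (P1) says $x_i$ and $y_i$ are never of strictly opposite sign, so $|w_i|=t|x_i|+(1-t)|y_i|$ with no cancellation and hence $\|\vec w\|_{1,d}=t\|\vec x\|_{1,d}+(1-t)\|\vec y\|_{1,d}=1$. Next, for (ii), write $w_i-w_j=t(x_i-x_j)+(1-t)(y_i-y_j)$ and split on the sign of $x_i-x_j$: if $x_i<x_j$ then $z_{ij}=-1$ while (P2) forces $y_i\le y_j$, so $w_i-w_j\le0$ and $-1\in\sgn(w_i-w_j)$; the case $x_i>x_j$ is symmetric; and if $x_i=x_j$ then (P2) gives $y_i=y_j$, so $w_i-w_j=0$ and $\sgn(w_i-w_j)=[-1,1]\ni z_{ij}$. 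Finally, for (iii), split on the sign of $x_i$: if $x_i=0$ then (P1) gives $y_i=0$, so $w_i=0$ and $\sgn(w_i)=\sgn(x_i)=[-1,1]$; if $x_i>0$ then (P1) gives $y_i\ge0$, so $w_i\ge0$, and either $w_i>0$, whence $\sgn(w_i)=\{1\}=\sgn(x_i)$, or $w_i=0$, whence $\mu d_i\,\sgn(x_i)=\{\mu d_i\}\subseteq\mu d_i[-1,1]=\mu d_i\,\sgn(w_i)$; the case $x_i<0$ is symmetric. These three checks, together with the inherited antisymmetry, say precisely that $(\mu,\vec w)$ is an eigenpair.

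I do not anticipate a deep obstacle: the whole argument is a case analysis driven directly by (P1) and (P2). The one place that needs care is that moving from $\vec x$ to $\vec w$ can destroy strict positivity or negativity of a coordinate --- for instance at $t=0$, or whenever $y_i=0\ne x_i$, the coordinate $w_i$ vanishes although $x_i$ does not --- and then one must invoke that $\sgn(0)=[-1,1]\supseteq\{\pm1\}$ is wide enough to still contain the value forced by the eigenequation for $\vec x$; this is exactly the content of the inclusion $\{\mu d_i\}\subseteq\mu d_i[-1,1]$ used in step (iii). Everything else is bookkeeping.
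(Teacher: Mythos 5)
Your proposal is correct and follows essentially the same route as the paper: reuse the certificate $z_{ij}(\vec x)$ and observe that (P1) and (P2) force $\sgn(x_i)\subset\sgn(w_i)$ and $\sgn(x_i-x_j)\subset\sgn(w_i-w_j)$, so the eigenequation transfers verbatim. The only differences are organizational — the paper first establishes the claim for $\vec y$ and then notes that $\vec x$ and $t\vec x+(1-t)\vec y$ again satisfy (P1)--(P2), whereas you verify the convex combination directly and, commendably, also check the normalization $\vec w\in X$, which the paper leaves implicit.
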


\begin{proof}
Since $(\mu,\vec x)$ is an eigenpair, we have: $\exists  z_{ij}(\vec x)\in \sgn(x_i-x_j)$ satisfying
\[
z_{ji}(\vec x)=-z_{ij}(\vec x),\,\forall\,i\sim j
\;\;
\text{and}
\;\;
\sum_{j\sim i} z_{ij}(\vec x)\in \mu d_i \sgn (x_i),\,\, i=1, \cdots, n.
\]
By (P1) and (P2),
we deduce that  $\sgn(x_i)\subset \sgn(y_i)$ and $\sgn(x_i-x_j)\subset \sgn(y_i-y_j)$.
If one takes $z_{ij}(\vec y)=z_{ij}(\vec x), \forall\,i,j\in \{1,2,\cdots,n\}$, then
$z_{ij}(\vec y)\in \sgn(y_i-y_j)$ also satisfies
\[
z_{ji}(\vec y)=-z_{ij}(\vec y),\,\forall\,i\sim j
\;\;
\text{and}
\;\;
\sum_{j\sim i} z_{ij}(\vec y)\in \mu d_i \sgn (y_i),\,\, i=1, \cdots, n.
\]
These mean $(\mu,\vec y)$ is an eigenpair, too. More generally, for any given $t\in[0,1]$, we let $\widetilde{\vec y}=t\vec x+(1-t)\vec y$. It is easy to check that $\vec x$ and $\vec y:=\widetilde{\vec y}$ satisfy (P1) and (P2).
In the same way, we have $(\mu,\widetilde{\vec y})$ is also an eigenpair. This completes the proof.
\end{proof}

A vector $\hat{\vec x}=(\hat{x}_1, \hat{x}_2,\cdots, \hat{x}_n)\in X$ is called a binary valued vector, if $\hat{\vec x}=c(a_1, \cdots, a_n)$ for some $c>0$, where $a_1, a_2, \cdots, a_n$ are either $1$ or $0$.
By the above lemma, it follows
\begin{theorem}\label{th:binary}
For any eigenpair $(\mu,\vec x)$ of $\Delta_1$, there exists a binary valued vector $\hat{\vec x}$  such that $(\mu,\hat{\vec x})$ is an eigenpair and $\vec x\simeq\hat{\vec x}$ in $S(G)\cap I^{-1}(\mu)$.
Consequently, $\exists A\subset V$ such that $\mu=\frac{|\partial A|}{\vol (A)}$.
\end{theorem}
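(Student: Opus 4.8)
The plan is to prove the statement in \emph{one} homotopy step: take the binary valued vector $\hat{\vec x}$ to be the normalized indicator of the positive support of $\vec x$, and join $\vec x$ to $\hat{\vec x}$ by the straight segment, invoking the preceding Lemma to guarantee that every point of the segment is again an eigenvector for the same $\mu$. Since $\sgn$ is odd, $(\mu,-\vec x)$ is an eigenpair whenever $(\mu,\vec x)$ is, so we may assume $D_+(\vec x)\ne\varnothing$ (otherwise replace $\vec x$ by $-\vec x$, which merely interchanges $D_+$ and $D_-$). Put $A:=D_+(\vec x)$ and let $\hat{\vec x}$ be defined by $\hat x_i=1/\vol(A)$ for $i\in A$ and $\hat x_i=0$ for $i\notin A$; then $\|\hat{\vec x}\|_{1,d}=1$, so $\hat{\vec x}\in X$, and $\hat{\vec x}$ is a binary valued vector in the sense defined above.

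Next I would verify that the pair $(\vec x,\hat{\vec x})$ satisfies hypotheses (P1) and (P2) of the Lemma. Property (P1) is immediate: $\hat{\vec x}$ vanishes exactly off $A$ and is a positive constant on $A$, so $x_i=0\Rightarrow\hat x_i=0$, $x_i<0\Rightarrow\hat x_i=0\le 0$, and $x_i>0\Rightarrow\hat x_i>0$. Property (P2) is a short case check: $\hat x$ depends only on $\sign(\cdot)$ and is nondecreasing in it ($\hat x=0$ where $x\le 0$, $\hat x=1/\vol(A)$ where $x>0$), so $x_i<x_j$ forces $\sign(x_i)\le\sign(x_j)$ and hence $\hat x_i\le\hat x_j$, while $x_i=x_j$ gives $\sign(x_i)=\sign(x_j)$ and hence $\hat x_i=\hat x_j$. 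By the Lemma, $(\mu,\,t\vec x+(1-t)\hat{\vec x})$ is an eigenpair for every $t\in[0,1]$; the case $t=0$ shows in particular that $(\mu,\hat{\vec x})$ is an eigenpair.

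It then remains to check that the segment $\gamma(t):=t\vec x+(1-t)\hat{\vec x}$ lies inside $S(G)\cap I^{-1}(\mu)$, so that it witnesses $\vec x\simeq\hat{\vec x}$ there. Because $\hat x_i\ge 0$ always and $\hat x_i>0$ only where $x_i>0$, no coordinate of $\vec x$ and $\hat{\vec x}$ has opposite strict sign; hence $|t x_i+(1-t)\hat x_i|=t|x_i|+(1-t)|\hat x_i|$ for each $i$, and therefore $\|\gamma(t)\|_{1,d}=t\|\vec x\|_{1,d}+(1-t)\|\hat{\vec x}\|_{1,d}=1$, i.e.\ $\gamma(t)\in X$. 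Since $(\mu,\gamma(t))$ is an eigenpair in $X$, the standard Rayleigh identity $I(\vec v)=\mu\|\vec v\|_{1,d}$ for eigenpairs (got by pairing $\sum_{j\sim i}z_{ij}\in\mu d_i\sgn(v_i)$ with $\vec v$ and using $z_{ji}=-z_{ij}$ together with $z_{ij}(v_i-v_j)=|v_i-v_j|$) gives $I(\gamma(t))=\mu$. Thus $\gamma\colon[0,1]\to S(G)\cap I^{-1}(\mu)$ is continuous with $\gamma(1)=\vec x$, $\gamma(0)=\hat{\vec x}$, which is precisely $\vec x\simeq\hat{\vec x}$ in $S(G)\cap I^{-1}(\mu)$.

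For the consequence, write $c:=1/\vol(A)>0$, so $\hat{\vec x}=c\,\mathbf{1}_A$; then $I(\hat{\vec x})=\tfrac12\sum_{i,j}a_{ij}\,|c\mathbf{1}_A(i)-c\mathbf{1}_A(j)|=c\,|\partial A|$ and $\|\hat{\vec x}\|_{1,d}=c\,\vol(A)$, so the Rayleigh identity applied to $(\mu,\hat{\vec x})$ yields $\mu=I(\hat{\vec x})/\|\hat{\vec x}\|_{1,d}=|\partial A|/\vol(A)$ (equivalently, sum the eigen-equation over $i\in A$ and cancel the antisymmetric terms). I do not expect a serious obstacle: the essential content sits in the preceding Lemma, and what is left is the choice of $\hat{\vec x}$ and routine sign bookkeeping. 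The one point that needs a word of care is the degenerate case $D_+(\vec x)=\varnothing$ (e.g.\ $\mu=0$ with $\vec x$ of constant sign, where $S(G)\cap I^{-1}(\mu)$ can be disconnected): there one must already have passed to $-\vec x$ at the outset, and this is exactly the $\vec x\mapsto-\vec x$ reduction used above.
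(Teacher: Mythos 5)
Your proof is correct and takes essentially the same route as the paper: the paper likewise deduces the first claim directly from the preceding lemma (with your choice of $\hat{\vec x}$ as the normalized indicator of the support left implicit) and obtains $\mu=\frac{|\partial A|}{\vol(A)}$ by summing the eigen-equation, which is exactly your Rayleigh-identity computation. If anything you are more careful than the paper, which omits the verification of (P1)--(P2), the check that the segment stays in $S(G)\cap I^{-1}(\mu)$, and the degenerate all-nonpositive case that you flag at the end.
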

\begin{proof}
We only need to prove the last conclusion. In fact, let $A=\{i\in V \,|\, \hat{\vec x}_i=1\}.$ From the system:
$$ \sum_{j\sim i}z_{ij}(\hat{\vec x})=\mu d_i \hat{x}_i,\,\,\,\,\,\,\forall\,i,$$
after summation on both sides, we have $\mu=\frac{|\partial A|}{\vol(A)}$.
\end{proof}

\begin{theorem}\label{properties}
The following facts are readily observed.
\begin{description}
\item(1) For any eigenvalue $\mu$ of $\Delta_1$, we have $ 0\le \mu \le 1$.
\item(2) The distance of two different eigenvalues are at least $\frac{4}{n^2(n-1)^2}$, where $n=|V|$. So there are only finite eigenvalues of $\Delta_1$.
\item(3) If $(\mu, \vec x)$ is an eigenpair of $\Delta_1$, then $ I(\vec x)=\mu$. Moreover, if $\mu\ne0$, then $0\in \sum^n_{i=1} d_i \sgn(x_i)$.
\item(4) If $\vec x\in X$ with $x_1=x_2= \cdots = x_n$, then $\vec x=\pm\frac{1}{\delta}(1, 1, \cdots, 1)$ is an eigenvector with eigenvalue $\mu=0$. Conversely, if $G$ is connected, then
the eigenvector $\vec x$ corresponding to the eigenvalue $\mu=0$ must be $\vec x=\pm\frac{1}{\delta}(1, 1, \cdots, 1)$.
\end{description}
\end{theorem}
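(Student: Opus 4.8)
The plan is to obtain all four assertions from the coordinate form of the eigenpair equation, with the separation bound in (2) additionally relying on Theorem~\ref{th:binary}. I would begin by establishing the energy identity $I(\vec x)=\mu$ for every eigenpair $(\mu,\vec x)$, which is the first half of (3) and drives the rest. Starting from $\sum_{j\sim i}z_{ij}\in\mu d_i\sgn(x_i)$ with $z_{ij}\in\sgn(x_i-x_j)$ and $z_{ji}=-z_{ij}$, I multiply the $i$-th relation by $x_i$ and sum over $i$. On the right, $x_i s=|x_i|$ for every $s\in\sgn(x_i)$ (three sign cases), so the right side equals $\mu\sum_i d_i|x_i|=\mu\|\vec x\|_{1,d}=\mu$; on the left, pairing the ordered pair $(i,j)$ with $(j,i)$ collapses the double sum to $\sum_{\{i,j\}\in E}(x_i-x_j)z_{ij}=\sum_{\{i,j\}\in E}|x_i-x_j|=I(\vec x)$. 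Given $I(\vec x)=\mu$, part (1) is immediate: $I(\vec x)\ge0$ gives $\mu\ge0$, and $I(\vec x)=\sum_{\{i,j\}\in E}|x_i-x_j|\le\sum_{\{i,j\}\in E}(|x_i|+|x_j|)=\sum_i d_i|x_i|=1$ gives $\mu\le1$. For the second half of (3), summing the $i$-th relation over $i$ \emph{without} the factor $x_i$ kills the left side by antisymmetry, so $0\in\mu\sum_i d_i\sgn(x_i)$, and $\mu\ne0$ lets us divide out $\mu$.

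For (2) I would invoke Theorem~\ref{th:binary} to write every eigenvalue as $\mu=|\partial A|/\vol(A)$ for some nonempty $A\subseteq V$. When $\mu\ne0$, part (3) applied to the corresponding binary eigenvector $\hat{\vec x}$ (positive on $A$, zero off $A$) gives $0\in\sum_i d_i\sgn(\hat x_i)=[\vol(A)-\vol(A^c),\,\vol(A)+\vol(A^c)]$, hence $\vol(A)\le\vol(A^c)$, so $\vol(A)\le\tfrac12\vol(V)=|E|\le n(n-1)/2$. Writing two distinct nonzero eigenvalues as $\mu_1=p_1/q_1$ and $\mu_2=p_2/q_2$ with $p_i=|\partial A_i|$, $q_i=\vol(A_i)$ positive integers and $q_i\le n(n-1)/2$, we get $|\mu_1-\mu_2|=|p_1q_2-p_2q_1|/(q_1q_2)\ge1/(q_1q_2)\ge4/(n^2(n-1)^2)$; if one of them is $0$, the gap equals the nonzero one, which is $\ge1/q\ge2/(n(n-1))\ge4/(n^2(n-1)^2)$ for $n\ge2$ (nothing to prove for $n=1$). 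Combined with $\mu\in[0,1]$ from (1), this shows the spectrum is finite.

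For (4), if $x_1=\cdots=x_n$ and $\vec x\in X$ then $|x_1|\sum_i d_i=\|\vec x\|_{1,d}=1$ forces $\vec x=\pm\tfrac{1}{\delta}(1,\dots,1)$, and the choice $z_{ij}\equiv0$ verifies $\sum_{j\sim i}z_{ij}=0\in0\cdot d_i\sgn(x_i)$, so it is an eigenvector with $\mu=0$; conversely, an eigenpair $(0,\vec x)$ satisfies $I(\vec x)=0$ by (3), so $x_i=x_j$ on every edge, and connectedness propagates equality to all of $V$, whereupon the first part applies. The one spot that needs genuine care is the constant $4$ in (2): it rests on the bound $\vol(A)\le|E|$, which is exactly what $\mu\ne0\Rightarrow0\in\sum_i d_i\sgn(\hat x_i)$ buys us — replacing it by the crude $\vol(A)\le\vol(V)\le n(n-1)$ would only yield $1/(n^2(n-1)^2)$. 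The rest is routine bookkeeping.
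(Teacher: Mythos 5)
Your proposal is correct, and it is worth comparing with what the paper actually does: the paper proves \emph{only} fact (2) and disposes of (1), (3) and (4) with the remark ``other facts follow from \cite{Chang2014}'', whereas you supply self-contained proofs of all four. Your arguments for (1), (3), (4) are the standard ones (multiply the eigen-relation by $x_i$ and sum to get $I(\vec x)=\mu$; sum without the factor $x_i$ to get $0\in\mu\sum_i d_i\sgn(x_i)$; use $I(\vec x)=0$ plus connectedness for (4)) and are all sound. For (2) you follow the same route as the paper --- represent each eigenvalue as $|\partial A|/\vol(A)$ via Theorem~\ref{th:binary} and use integrality of $|\partial A|\vol(B)-|\partial B|\vol(A)|$ --- but you are actually more careful than the printed proof. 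The paper's final inequality $1/(\vol(A)\vol(B))\ge (1/C_n^2)^2$ silently assumes $\vol(A),\vol(B)\le C_n^2=n(n-1)/2$, i.e.\ half the total volume; this is exactly the point you justify by noting that for $\mu\ne 0$ fact (3) applied to the binary eigenvector forces $0\in[\vol(A)-\vol(A^c),\vol(A)+\vol(A^c)]$, hence $\vol(A)\le\vol(A^c)\le |E|\le C_n^2$. You also treat separately the case where one of the two eigenvalues is $0$ (where $A$ may be all of $V$ and the paper's estimate does not literally apply), checking that the gap is then at least $1/C_n^2\ge 4/(n^2(n-1)^2)$ for $n\ge 2$. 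So your write-up both recovers the paper's argument and patches its two small elisions; no gaps remain.
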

\begin{proof}
We only prove here the fact (2). Other facts follow from~\cite{Chang2014}.

For given two different  critical values $\mu,\tilde{\mu}$, by Theorem~\ref{th:binary}, we have:
$\exists A, B\subset V$ such that $\mu=\frac{|\partial A|}{\vol (A)}$ and $\tilde{\mu}=\frac{|\partial B|}{\vol (B)}$. Accordingly, we obtain
$$|\mu-\tilde{\mu}| =
\left|\frac{|\partial A|}{\vol(A)}-\frac{|\partial B|}{\vol(B)}\right|=\frac{||\partial A|\vol(B)-|\partial B|\vol(A)|}{\vol(A)\vol(B)}\ge \frac{1}{\vol(A)\vol(B)}\ge \left(\frac{1}{C_n^2}\right)^2.$$
\end{proof}

Since the number of eigenvalues are finite, all eigenvalues can be ordered as follow:
$$ 0=\mu_1\le \mu_2\le \cdots,$$
On the other hand, the subset of critical values (at least $n$ if counting multiplicit\vec y) $\{c_k\}$, is ordered by the topological genus, i.e.,
$$0=c_1\le c_2 \le \cdots \le c_n.$$
According to Theorem 1, $\mu$ is a eigenvalue of $\Delta_1$ if and only if $\mu$ is a critical value of $\widetilde{I}(\vec x)$.
But we do not know whether these two sequences are identical.

\begin{theorem}\label{th:pi_key}
Any eigenvector $\vec x$ of $\Delta_1$ with eigenvalue $\mu \neq 0$ lies on $\pi$.
\end{theorem}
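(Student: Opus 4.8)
By definition an eigenpair $(\mu,\vec x)$ of $\Delta_1$ has $\vec x\in X$, so the only thing to verify is the defining inequality of $\pi$, namely $|\delta_+(\vec x)-\delta_-(\vec x)|\le \delta_0(\vec x)$. The plan is to extract this directly from the balancing condition already recorded in Theorem~\ref{properties}(3), which asserts that $\mu\ne 0$ forces $0\in\sum_{i=1}^n d_i\,\sgn(x_i)$.

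First I would unwind the right-hand side as a Minkowski sum of sets in $\mathbb{R}$. For $i\in D_+(\vec x)$ one has $\sgn(x_i)=\{1\}$, for $i\in D_-(\vec x)$ one has $\sgn(x_i)=\{-1\}$, and for $i\in D_0(\vec x)$ one has $\sgn(x_i)=[-1,1]$. Hence
\[
\sum_{i=1}^n d_i\,\sgn(x_i)=\Bigl[\delta_+(\vec x)-\delta_-(\vec x)-\delta_0(\vec x),\ \delta_+(\vec x)-\delta_-(\vec x)+\delta_0(\vec x)\Bigr].
\]
Requiring $0$ to lie in this interval is exactly $-\delta_0(\vec x)\le \delta_+(\vec x)-\delta_-(\vec x)\le \delta_0(\vec x)$, i.e. $|\delta_+(\vec x)-\delta_-(\vec x)|\le \delta_0(\vec x)$, which together with $\vec x\in X$ gives $\vec x\in\pi$.

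For completeness I would also include a self-contained derivation of the input fact, in case one does not want to invoke Theorem~\ref{properties}(3) as a black box: write the eigenvector equation in coordinate form, so there exist $z_{ij}(\vec x)\in\sgn(x_i-x_j)$ with $z_{ji}(\vec x)=-z_{ij}(\vec x)$ and $\sum_{j\sim i}z_{ij}(\vec x)\in\mu d_i\,\sgn(x_i)$ for every $i$. Summing these inclusions over $i=1,\dots,n$, the left side telescopes to $0$ by antisymmetry of $z$, giving $0\in \mu\sum_{i=1}^n d_i\,\sgn(x_i)$; dividing by $\mu\ne 0$ yields $0\in\sum_{i=1}^n d_i\,\sgn(x_i)$.

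I do not anticipate a genuine obstacle here: the statement is a one-line consequence of the balancing relation, and the only point requiring any care is the bookkeeping of the set-valued sum over the three sign classes $D_+,D_-,D_0$ (in particular making sure the $D_0$ contribution is the full symmetric interval $[-\delta_0,\delta_0]$ rather than a proper subset). If one prefers to avoid quoting Theorem~\ref{properties}(3), the alternate derivation above makes the argument completely elementary.
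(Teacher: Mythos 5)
Your proposal is correct and follows essentially the same route as the paper: both invoke Theorem~\ref{properties}(3) to get $0\in\sum_{i=1}^n d_i\,\sgn(x_i)$ and then observe that the contributions from $D_+$ and $D_-$ are fixed at $\pm d_i$ while the $D_0$ terms range over $[-d_i,d_i]$, yielding $|\delta_+(\vec x)-\delta_-(\vec x)|\le\delta_0(\vec x)$. Your optional self-contained derivation of the balancing relation (summing the coordinate equations and using antisymmetry of $z_{ij}$) is a harmless extra that the paper omits, since it cites the fact from \cite{Chang2014}.
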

\begin{proof}
By Theorem \ref{properties}, we have $0\in \sum^n_{i=1} d_i \sgn(x_i)$. So there exist $\theta_i\in \sgn(x_i)$ such that $\sum^n_{i=1} d_i\theta_i=0$, $i=1,2,\cdots,n$. Then
$$|\delta_+(\vec x)-\delta_-(\vec x)|=\left|\sum_{i\in D_+(\vec x)}d_i-\sum_{i\in D_-(\vec x)}d_i\right|=\left|\sum_{i\in D_0(\vec x)}\theta_id_i\right|\le \delta_0(\vec x),$$ which means that $\vec x\in\pi$.
\end{proof}


\subsection{Properties of $\pi$}

According to Theorems~\ref{th:pi_key} and \ref{th:chang2014} (\videpost),
$\pi$ is the feasible set for searching the Cheeger cut and some useful properties of $\pi$ will be shown in this section. By simple observation, we have

\begin{theorem}\label{th:compactness}
$\pi$ is compact and $\pi=\bigcup\limits_{\triangle\in\Pi}\triangle$.
\end{theorem}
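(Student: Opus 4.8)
The statement has two parts: (i) $\pi$ is compact, and (ii) $\pi=\bigcup_{\triangle\in\Pi}\triangle$. The plan is to handle the set identity first, since compactness will follow from it together with the finiteness of $\Pi$.

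For the identity, I would argue by double inclusion. For ``$\supseteq$'', take any $\triangle_m\in\Pi$; by definition of $\Pi$ there is some $\vec y\in\pi$ with $\sign(\vec y)=m$. I claim every $\vec x\in\triangle_m$ already lies in $\pi$. Indeed, membership in $\pi$ is governed solely by the quantities $\delta_+(\vec x)$, $\delta_-(\vec x)$, $\delta_0(\vec x)$, and by the remark following the definition of a cell, $\sign(\vec x)=m$ for every $\vec x\in\triangle_m$; hence $D_\pm(\vec x)=D_\pm(\vec y)$ and $D_0(\vec x)=D_0(\vec y)$, so $\delta_\pm(\vec x)=\delta_\pm(\vec y)$ and $\delta_0(\vec x)=\delta_0(\vec y)$. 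Since $\vec y\in\pi$ satisfies $|\delta_+(\vec y)-\delta_-(\vec y)|\le\delta_0(\vec y)$ and also $\vec y\in X$, and since $\vec x\in X$ as well (cells are subsets of $X$), the same inequality holds for $\vec x$, giving $\vec x\in\pi$. Thus $\triangle_m\subseteq\pi$, and the union is contained in $\pi$. For ``$\subseteq$'', take any $\vec x\in\pi\subset X$; then $\vec x\ne\vec 0$, so $m:=\sign(\vec x)$ is a nonzero map, and $\vec x\in\triangle_m$ with $\triangle_m\in\Pi$ by the very definition of $\Pi$ (witnessed by $\vec x$ itself). Hence $\vec x$ lies in the union, completing the identity.

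For compactness, I would note that $\Pi$ is a finite collection — it is a subset of the set of all cells, of which there are only $\sum_{k=0}^{n-1}C_n^{k+1}2^{k+1}<\infty$. So it suffices to show each $\triangle_m$ has compact closure inside $\pi$; but in fact I would argue $\pi$ is closed and bounded directly. Boundedness is immediate: $\pi\subset X=\{\vec x:\|\vec x\|_{1,d}=1\}$, and since $G$ has no isolated vertices in the relevant setting (or more simply since all $d_i\ge 1$ for vertices appearing, and one uses equivalence of norms on $\mathbb{R}^n$), $\|\vec x\|_{1,d}=1$ forces $\vec x$ into a bounded set. For closedness, $X$ is closed (preimage of $\{1\}$ under the continuous map $\vec x\mapsto\|\vec x\|_{1,d}$), and $\pi$ is the intersection of $X$ with $\{\vec x:|\delta_+(\vec x)-\delta_-(\vec x)|\le\delta_0(\vec x)\}$; the latter is a finite union of closed cones (one for each sign pattern $m$ satisfying the inequality, together with their lower-dimensional faces), hence closed. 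Therefore $\pi$ is a closed bounded subset of $\mathbb{R}^n$, i.e. compact.

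\textbf{Main obstacle.} The only delicate point is the closedness of the ``balance'' set $\{\vec x:|\delta_+(\vec x)-\delta_-(\vec x)|\le\delta_0(\vec x)\}$: the maps $\vec x\mapsto\delta_\pm(\vec x)$ are \emph{not} continuous (mass can jump from $D_0$ to $D_\pm$ as coordinates leave zero), so one cannot simply invoke continuity. The clean way around this is precisely the cell decomposition: on the closure $\overline{\triangle_m}$ the inequality is constant in truth value, so $\pi$ is literally the union of those $\overline{\triangle_m}$ for which it holds — a finite union of closed sets — which is exactly the content of part (ii) once one checks that taking closures of the cells in $\Pi$ recovers $\pi$. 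I would therefore fold the compactness proof into the set identity: having shown $\pi=\bigcup_{\triangle\in\Pi}\triangle$, and observing each $\triangle_m$ together with $\pi$ is contained in the closed set $X$, one gets $\overline{\pi}=\bigcup_{\triangle\in\Pi}\overline{\triangle}\subseteq\pi$ (using that the closure of $\triangle_m$ adds only faces, which by the ``$\supseteq$'' argument above already sit in $\pi$ when $\triangle_m\in\Pi$), so $\pi$ is closed, and boundedness finishes it.
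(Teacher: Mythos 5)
Your proposal is correct, and it actually supplies a proof where the paper offers none (the theorem is introduced with ``By simple observation''), so there is nothing to compare it against except the surrounding machinery. The set identity is argued exactly as one should: $\sign(\vec x)=m$ for every $\vec x\in\triangle_m$, so membership in $\pi$ depends only on the cell, which gives ``$\supseteq$''; and every $\vec x\in\pi$ is its own witness for ``$\subseteq$''. You are also right to flag that $\delta_\pm$ are discontinuous, so closedness cannot be read off from the defining inequality. The one spot that is slightly imprecise is your claim that the truth value of $|\delta_+(\vec x)-\delta_-(\vec x)|\le\delta_0(\vec x)$ is ``constant'' on $\overline{\triangle_m}$ and that the faces lie in $\pi$ ``by the $\supseteq$ argument'': the sign vector changes on the boundary faces, so the $\supseteq$ argument does not apply verbatim there. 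What saves you is a monotonicity observation that deserves to be stated: passing from $\vec x\in\triangle_m$ to a point $\vec x'$ in a face only zeroes out coordinates, so $\delta_\pm(\vec x')\le\delta_\pm(\vec x)\le\delta/2$, and Lemma~\ref{lm:pi} then puts $\vec x'\in\pi$ (equivalently, $\delta_+$ and $\delta_-$ are lower semicontinuous, so the sublevel conditions $\delta_\pm\le\delta/2$ of Lemma~\ref{lm:pi} define a closed set, which gives closedness of $\pi$ in one line without the cell decomposition). With that step made explicit, the finite union of closed cells $\overline{\triangle_m}$, $\triangle_m\in\Pi$, is contained in $\pi$ and contains it, and boundedness of $X$ (all $d_i\ge 1$) finishes compactness.
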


\begin{lemma}\label{lm:pi}
$\vec x\in\pi$ if and only if $\delta_+(\vec x)\le \frac{\delta}{2}$ and $\delta_-(\vec x)\le \frac{\delta}{2}$ with $\delta=\sum^n_{i=1} d_i$.
\end{lemma}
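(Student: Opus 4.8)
The statement to prove is Lemma~\ref{lm:pi}: $\vec x\in\pi$ iff $\delta_+(\vec x)\le\frac{\delta}{2}$ and $\delta_-(\vec x)\le\frac{\delta}{2}$. The plan is to work directly from the definition $\pi=\{\vec x\in X:|\delta_+(\vec x)-\delta_-(\vec x)|\le\delta_0(\vec x)\}$ together with the basic identity
\[
\delta_+(\vec x)+\delta_0(\vec x)+\delta_-(\vec x)=\sum_{i=1}^n d_i=\delta,
\]
which holds because $D_+(\vec x),D_0(\vec x),D_-(\vec x)$ partition $V$. Everything reduces to elementary manipulation of this one linear relation, so there is no real analytic content; the point is just to rewrite the defining inequality symmetrically.

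First I would use the identity to eliminate $\delta_0(\vec x)$, writing $\delta_0(\vec x)=\delta-\delta_+(\vec x)-\delta_-(\vec x)$. Then the condition $|\delta_+(\vec x)-\delta_-(\vec x)|\le\delta_0(\vec x)$ splits into the two one-sided inequalities $\delta_+(\vec x)-\delta_-(\vec x)\le\delta_0(\vec x)$ and $\delta_-(\vec x)-\delta_+(\vec x)\le\delta_0(\vec x)$. Substituting for $\delta_0(\vec x)$, the first becomes $\delta_+(\vec x)-\delta_-(\vec x)\le\delta-\delta_+(\vec x)-\delta_-(\vec x)$, i.e. $2\delta_+(\vec x)\le\delta$, i.e. $\delta_+(\vec x)\le\frac{\delta}{2}$; symmetrically the second becomes $\delta_-(\vec x)\le\frac{\delta}{2}$. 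Since $|a-b|\le c$ is equivalent to the conjunction of $a-b\le c$ and $b-a\le c$, this shows the defining condition of $\pi$ is equivalent to the pair $\delta_+(\vec x)\le\frac{\delta}{2}$ and $\delta_-(\vec x)\le\frac{\delta}{2}$, which is exactly the claim. The membership $\vec x\in X$ is common to both sides and needs no comment.

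There is essentially no obstacle here: the only thing to be slightly careful about is that the equivalence $|a-b|\le c\iff(a-b\le c\text{ and }b-a\le c)$ is used in both directions, and that $\delta_0(\vec x)\ge0$ is automatic (so no sign issue arises when the substitution is made). I would present the argument as a short chain of equivalences rather than proving the two implications separately, since each step above is reversible.
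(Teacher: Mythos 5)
Your proposal is correct and follows essentially the same route as the paper: both substitute $\delta_0(\vec x)=\delta-\delta_+(\vec x)-\delta_-(\vec x)$ into the defining inequality of $\pi$ and reduce to $2\delta_\pm(\vec x)\le\delta$ by a chain of equivalences. The only cosmetic difference is that the paper first assumes $\delta_+(\vec x)\ge\delta_-(\vec x)$ without loss of generality, whereas you handle the two one-sided inequalities symmetrically, which is equally valid.
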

\begin{proof}
Without loss of generality, we assume $\delta_+(\vec x)\ge \delta_-(\vec x)$, then
\begin{align*}
\vec x\in\pi&\Leftrightarrow \delta_+(\vec x)-\delta_-(\vec x)\le \delta_0(\vec x)
\\&\Leftrightarrow \delta_+(\vec x)-\delta_-(\vec x)\le \delta-\delta_+(\vec x)-\delta_-(\vec x)
\\&\Leftrightarrow 2\delta_+(\vec x)\le \delta
\\&\Leftrightarrow\delta_+(\vec x)\le \frac{\delta}{2}\text{ and }\delta_-(\vec x)\le \frac{\delta}{2}.
\end{align*}
\end{proof}

\begin{theorem}\label{th:pi_connected}
$\pi$ is connected when $n\ge 3$.
\end{theorem}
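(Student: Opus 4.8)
The plan is to show that $\pi$ is path-connected by exhibiting, for an arbitrary $\vec x\in\pi$, a path inside $\pi$ from $\vec x$ to a fixed reference point, say $\vec x_0=\cen(\triangle_m)$ for some cell, or even more concretely to one of the two binary-type vectors $\pm\tfrac{1}{d_1}\vec e_1\in\pi$. Since path-connectedness is an equivalence relation, it suffices to connect every $\vec x$ to this single anchor. By Lemma~\ref{lm:pi}, membership in $\pi$ is equivalent to the two inequalities $\delta_+(\vec x)\le\delta/2$ and $\delta_-(\vec x)\le\delta/2$, so the whole argument reduces to moving mass around while keeping both the positive and negative weighted supports from exceeding half of $\delta$. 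Note $n\ge 3$ is needed precisely so that we can always ``park'' weight on a zero coordinate: if $\vec x$ lies in a top-dimensional cell with no zero coordinates, then $\delta_+(\vec x)+\delta_-(\vec x)=\delta$, and $\vec x\in\pi$ forces $\delta_+=\delta_-=\delta/2$; with $n\ge 3$ there is room to deform.

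The concrete steps I would carry out: (i) First, within the closed cell $\overline{\triangle_m}$ containing $\vec x$, contract $\vec x$ along a straight segment in $X$ toward a vertex of that cell, i.e.\ toward a point of the form $\tfrac{1}{d_k}\vec e_k$ or $-\tfrac{1}{d_k}\vec e_k$. I must check this segment stays in $\pi$: along it, coordinates only shrink in absolute value toward $0$ (except the surviving one), so $D_+$ and $D_-$ only lose elements, hence $\delta_\pm$ are nonincreasing, and by Lemma~\ref{lm:pi} the segment stays in $\pi$. This connects $\vec x$ to some $\pm\tfrac{1}{d_k}\vec e_k$. (ii) Next, connect $\tfrac{1}{d_k}\vec e_k$ to $\tfrac{1}{d_1}\vec e_1$ for every $k$: pass through an intermediate point supported on $\{1,k\}$, e.g.\ move along the path $t\mapsto$ normalization of $(1-t)\tfrac{1}{d_k}\vec e_k + t\,\tfrac{1}{d_1}\vec e_1$ inside $X$, which keeps $\delta_-=0$ and $\delta_+\le\max\{d_1,d_k\}$; this stays in $\pi$ as long as a single vertex has degree $\le\delta/2$, which holds whenever $n\ge 2$ is non-degenerate, and certainly for $n\ge 3$ — or, if a single vertex already has degree $>\delta/2$ (a possibility with a dominant hub), route instead through a two-point support split evenly, using $n\ge 3$ to find a third vertex. (iii) Finally, connect $\tfrac{1}{d_1}\vec e_1$ to $-\tfrac{1}{d_1}\vec e_1$: this is the step that genuinely uses signs. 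Go $\tfrac{1}{d_1}\vec e_1 \to$ a balanced point like $\tfrac{1}{2d_1}\vec e_1 - \tfrac{1}{2d_2}\vec e_2$ (here $\delta_+ = d_1 \le \delta/2$ and $\delta_- = d_2 \le \delta/2$ once $n\ge 3$, or balance across more vertices if needed) $\to -\tfrac{1}{d_2}\vec e_2 \to -\tfrac{1}{d_1}\vec e_1$, each leg a normalized straight segment verified via Lemma~\ref{lm:pi}.

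The main obstacle is the bookkeeping in step (ii)/(iii) when the graph has a vertex of very large degree, i.e.\ $d_k>\delta/2$ for some $k$ (equivalently, one vertex carries more than half the total degree): then the single-support point $\tfrac{1}{d_k}\vec e_k$ has $\delta_+(\tfrac{1}{d_k}\vec e_k)=d_k>\delta/2$, so it is \emph{not} in $\pi$, and step (i)'s target must be chosen more carefully — one cannot contract all the way to that vertex. The fix is to contract instead to a balanced two- or three-point configuration (possible because $n\ge 3$), and I expect the cleanest write-up chooses, once and for all, a vertex $v$ of \emph{minimum} degree as the hub for the anchor, or better, fixes the anchor to be an evenly-split point on a minimal-weight pair/triple and shows every cell's natural retract reaches it. I would therefore restructure: prove a small lemma that any $\vec x\in\pi$ is connected in $\pi$ to \emph{some} point whose support has at most two elements (by coordinate-wise shrinking, watching Lemma~\ref{lm:pi}), then finish by showing the set of all such ``$\le 2$-sparse'' points of $\pi$ is itself connected — a finite, explicit case check over pairs of vertices, where $n\ge 3$ guarantees the needed maneuvering room. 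The degree-heterogeneity case is where all the care goes; everything else is routine segment-tracking with Lemma~\ref{lm:pi}.
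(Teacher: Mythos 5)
Your skeleton --- contract an arbitrary $\vec x\in\pi$ to a single-vertex anchor $\pm\frac{1}{d_k}\vec e_k$, then connect the anchors pairwise --- is exactly the paper's, and your step (i) is sound; note moreover that the ``dominant hub'' worry you attach to it is moot: if $x_k\ne 0$ and $\vec x\in\pi$ then $k\in D_+(\vec x)\cup D_-(\vec x)$, so $d_k\le\delta_{\pm}(\vec x)\le\delta/2$ by Lemma~\ref{lm:pi}, and the anchor is automatically in $\pi$ (at most one vertex can have degree exceeding $\delta/2$, and no point of $\pi$ is supported on it). The genuine gap is in step (ii): on the interior of the normalized segment from $\frac{1}{d_k}\vec e_k$ to $\frac{1}{d_1}\vec e_1$ \emph{both} coordinates are strictly positive, so $\delta_+=d_1+d_k$, not $\max\{d_1,d_k\}$ as you assert. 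The condition $d_1+d_k\le\delta/2$ fails already for the triangle $K_3$ (all degrees $2$, so $d_1+d_k=4>3=\delta/2$) and for any center--leaf pair of a star, so your criterion ``stays in $\pi$ as long as a single vertex has degree $\le\delta/2$'' is wrong, and the fallback you reserve for the dominant-hub case would never be triggered in these examples. The same defect reappears in the last leg of your step (iii), $-\frac{1}{d_2}\vec e_2\to-\frac{1}{d_1}\vec e_1$, whose interior has $\delta_-=d_1+d_2$.

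The repair is precisely the paper's device: only ever use \emph{opposite-sign} two-point segments such as $y(t)=\bigl(\frac{t}{d_1},\frac{t-1}{d_2},0,\cdots,0\bigr)$, $t\in[0,1]$, which stay in $X$ and along which $\delta_+=d_1\le\delta/2$ and $\delta_-=d_2\le\delta/2$ are bounded \emph{separately}; consequently two same-sign anchors are joined by routing through a third vertex carrying the opposite sign, e.g.\ $\frac{1}{d_1}\vec e_1\simeq -\frac{1}{d_3}\vec e_3\simeq\frac{1}{d_2}\vec e_2$. This is where $n\ge3$ genuinely enters (and why $\pi$ splits into two components when $n=2$). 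With that substitution, and dropping the erroneous same-sign segments, your argument closes and coincides with the paper's proof.
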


\begin{proof}
For any $\vec x\in \pi$, $\delta_+(\vec x)\le \frac{\delta}{2}$, $\delta_-(\vec x)\le \frac{\delta}{2}$. Without loss of generality, we may assume $x_1\ne 0$. On one hand, consider
\[
\gamma(t)=\frac{(x_1,tx_2,\cdots,tx_n)}{d_1|x_1|+t\sum_{i=2}^n d_i|x_i|}, \quad t\in [0,1].
\]
It is easily checked that both $\delta_+(\gamma(t))\le \delta_+(\vec x)\le \frac{\delta}{2}$ and $\delta_-(\gamma(t))\le \delta_-(\vec x)\le \frac{\delta}{2}$ hold, then we have $\gamma([0,1])\subset \pi$.
This means that the path $\gamma$ connecting $\vec x$ and $(\frac{\sign(x_1)}{d_1},0,\cdots,0)$ lies in $\pi$,
\ie $x\simeq (\frac{\sign(x_1)}{d_1},0,\cdots,0)$.
 On the other hand, when $n\ge 3$, we can prove $(\pm\frac{1}{d_1},0,0,\cdots,0)\simeq (0,\pm \frac{1}{d_2},0,\cdots,0)$. Actually, for $(\frac{1}{d_1},0,0,\cdots,0)$ and $(0, -\frac{1}{d_2},0,\cdots,0)$, this can be readily verified by taking the following path
$$y(t)=\left(\frac{t}{d_1},\frac{t-1}{d_2},0,\cdots,0\right)\subset \pi,\,\,\,\forall\,t\in[0,1].$$
For $(\frac{1}{d_1},0,0,\cdots,0)$ and $(0, \frac{1}{d_2},0,\cdots,0)$, we then have
\[
(\frac{1}{d_1},0,0,\cdots,0) \simeq
(0,0,-\frac{1}{d_3},\cdots,0) \simeq
(0, \frac{1}{d_2},0,\cdots,0).
\]
Hence, for any $\vec y \in\pi$ with $y_i\neq 0$, we have $x\simeq (\frac{1}{d_1},0,0,\cdots,0)\simeq(0,\cdots,\frac{1}{d_i},\cdots,0)\simeq \vec y$.
\end{proof}

\begin{remark}
When $n=2$, it can be easily checked that $\pi$ consists of two disjoint connected subsets.
\end{remark}

\begin{lemma}
$\vec y \in\median(\vec x)$ if and only if $\vec y=\argmin\limits_{t\in \mathbb{R}}\sum_{i=1}^n d_i|t-x_i|$.
\end{lemma}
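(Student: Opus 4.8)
The plan is to use that $f(t):=\sum_{i=1}^n d_i|t-x_i|$ is a convex, piecewise-linear, coercive function of the single variable $t$ (being a nonnegative combination of the convex functions $t\mapsto|t-x_i|$), so that its set of global minimizers $\argmin_{t\in\mathbb{R}}f(t)$ is a nonempty compact interval, and $t^\star$ minimizes $f$ precisely when the left derivative of $f$ at $t^\star$ is $\le 0$ and the right derivative is $\ge 0$. The whole argument then reduces to computing these one-sided slopes in terms of the ordering permutation $\sigma$ and matching the resulting minimizer set against the set $\median(\vec x)$ prescribed in its definition.

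Concretely, I would first fix $\sigma$ with $x_{\sigma(1)}\le\cdots\le x_{\sigma(n)}$ and put $x_{\sigma(0)}=-\infty$, $x_{\sigma(n+1)}=+\infty$. On each nonempty open interval $(x_{\sigma(k)},x_{\sigma(k+1)})$ the function $f$ is affine with slope $s_k:=\sum_{i=1}^k d_{\sigma(i)}-\sum_{i=k+1}^n d_{\sigma(i)}$; in particular $s_0=-\delta$ and $s_n=\delta$ with $\delta:=\sum_i d_i$, while $s_k-s_{k-1}=2d_{\sigma(k)}>0$, so $(s_k)_{k=0}^n$ is strictly increasing. The one-sided derivatives of $f$ at any real number are the slopes of the two adjacent linear pieces; ties among the entries of $\vec x$ cause no trouble, because equal values merely make some of these intervals empty without disturbing the strictly increasing sequence $(s_k)$.

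Next I would invoke the index $k_0$ of the definition, which is exactly the unique index with $s_{k_0-1}<0\le s_{k_0}$, and split into the two cases appearing there. If $s_{k_0}=0$, then every slope of index $<k_0$ is negative and every slope of index $>k_0$ is positive, so $f$ strictly decreases on $(-\infty,x_{\sigma(k_0)}]$, is constant on $[x_{\sigma(k_0)},x_{\sigma(k_0+1)}]$, and strictly increases on $[x_{\sigma(k_0+1)},+\infty)$; hence $\argmin_{t}f(t)=[x_{\sigma(k_0)},x_{\sigma(k_0+1)}]$. If instead $s_{k_0}>0$, then every slope of index $<k_0$ is negative and every slope of index $\ge k_0$ is positive, so $f$ strictly decreases on $(-\infty,x_{\sigma(k_0)}]$ and strictly increases on $[x_{\sigma(k_0)},+\infty)$, whence $\argmin_{t}f(t)=\{x_{\sigma(k_0)}\}$. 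In both cases the minimizer set coincides with $\median(\vec x)$, which is precisely the asserted equivalence $\vec y\in\median(\vec x)\iff \vec y\in\argmin_{t\in\mathbb{R}}\sum_i d_i|t-x_i|$.

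The only place demanding care is the bookkeeping for repeated entries of $\vec x$: one must check that the left (respectively right) derivative of $f$ at the breakpoint $x_{\sigma(k_0)}$ really equals the slope $s_{j-1}$ (respectively $s_{k_0}$) of the first nonempty linear piece on that side, where $x_{\sigma(j)}=\cdots=x_{\sigma(k_0)}$ with $j\le k_0$; since then $j-1\le k_0-1$, that slope is still negative (respectively positive), so the case analysis goes through unchanged. Everything else is the classical identification of a weighted median with the minimizer of weighted $\ell^1$ deviation, here specialized to the weights $d_i$ and to the particular tie-breaking normalization built into the definition of $\median(\vec x)$.
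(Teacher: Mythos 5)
Your proof is correct and takes essentially the same route as the paper's: both compute the slope $\sum_{i=1}^{k}d_{\sigma(i)}-\sum_{i=k+1}^{n}d_{\sigma(i)}$ of $f$ on each interval between consecutive ordered entries, locate the transition index $k_0$ where these slopes change sign, and split into the two cases $s_{k_0}=0$ (interval of minimizers) and $s_{k_0}>0$ (unique minimizer) to match $\median(\vec x)$. Your additional bookkeeping for repeated entries is a welcome refinement of a point the paper glosses over, but it does not change the argument.
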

\begin{proof}
Let $f(t)=\sum_{i=1}^nd_i|t-x_i|$.

Without loss of  generality, we assume $x_1\le x_2\le \cdots \le x_n$ and $x_1< x_n$, then there exists $k\in \{1,2,\cdots,n-1\}$ such that $x_k<x_{k+1}$.

Assume $x_k\le t<t+h< x_{k+1}$, then we have $f(t+h)-f(t)=h\sum_{i=1}^kd_i-h\sum_{i=k+1}^nd_i$.

If $\sum_{i=1}^kd_i-\sum_{i=k+1}^nd_i\le 0$, then $f(t+h)-f(t)\le 0$, so $f$ is decreasing on $[x_k,x_{k+1}]$.

If $\sum_{i=1}^kd_i-\sum_{i=k+1}^nd_i\ge 0$, then $f(t+h)-f(t)\ge 0$, so $f$ is increasing on $[x_k,x_{k+1}]$.

Since there exists $k_0\in \{1,\cdots,n\}$ such that $$-\sum_{i=1}^n d_i<\cdots <\sum_{i=1}^{k_0-1}d_i-\sum_{i=k_0}^nd_i< 0\le \sum_{i=1}^{k_0}d_i-\sum_{i=k_0+1}^nd_i <\cdots<\sum_{i=1}^n d_i ,$$ we deduce that $f(t)$ is decreasing on $(-\infty,x_{k_0}]$ and increasing on $[x_{k_0+1},+\infty)$.

When $\sum_{i=1}^{k_0}d_i-\sum_{i=k_0+1}^nd_i =0$,
we have $\median(\vec x)=[x_{k_0},x_{k_0+1}]$, and
$f$ is constant on $\median(\vec x)$. Consequently, the set of minimal points of $f$ equals to $\median(\vec x)$.

When $\sum_{i=1}^{k_0}d_i-\sum_{i=k_0+1}^nd_i >0$,
we have $\median(\vec x)=\{x_{k_0}\}$ and $x_{k_0}$ is the only minimal point of $f$.
\end{proof}

\begin{theorem}\label{th:median}
For any $\vec x\in X$, we have
\[
0\in \median(\vec x)\Leftrightarrow \vec x\in\pi.
\]
In other word,
\begin{equation}\label{eq:pi_2}
\pi = \{\vec x\in X: 0\in \median(\vec x)\}.
\end{equation}
\end{theorem}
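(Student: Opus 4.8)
The plan is to reduce the statement to the preceding lemma, which identifies $\median(\vec x)$ with the set of global minimizers of the convex, piecewise-linear function
\[
f(t)=\sum_{i=1}^n d_i|t-x_i|,\qquad t\in\mathbb{R},
\]
and then to characterize when $0$ lies in that minimizer set purely in terms of the sign pattern of $\vec x$, matching it with the definition \eqref{eq:pi_1} of $\pi$. Note first that $\vec x\in X$ forces $\vec x\ne 0$, so $f$ genuinely attains its minimum and the preceding lemma applies, giving $0\in\median(\vec x)\iff f(0)=\min_{t\in\mathbb{R}}f(t)$.

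Next I would characterize when $0$ minimizes the convex function $f$ by bracketing it with the one-sided derivatives: $0$ is a global minimizer iff $f'_-(0)\le 0\le f'_+(0)$. Computing these from the definition of $f$, for small $t>0$ the term $d_i|t-x_i|$ contributes slope $+d_i$ when $x_i\le 0$ and $-d_i$ when $x_i>0$, so $f'_+(0)=\delta_-(\vec x)+\delta_0(\vec x)-\delta_+(\vec x)$; symmetrically, $f'_-(0)=\delta_-(\vec x)-\delta_0(\vec x)-\delta_+(\vec x)$. (Equivalently one may phrase this via $\partial f(0)=\sum_{i=1}^n d_i\sgn(-x_i)$ and the condition $0\in\partial f(0)$, i.e. the existence of $\theta_i\in\sgn(-x_i)$ with $\sum_i d_i\theta_i=0$; splitting the sum over the classes $D_+(\vec x)$, $D_0(\vec x)$, $D_-(\vec x)$ reproduces the same computation, the free choice $\theta_i\in[-1,1]$ on $D_0$ being exactly the source of the two-sided slack.)

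Then the bracketing condition $f'_-(0)\le 0\le f'_+(0)$ unpacks into the pair of inequalities $\delta_+(\vec x)-\delta_-(\vec x)\le\delta_0(\vec x)$ and $\delta_-(\vec x)-\delta_+(\vec x)\le\delta_0(\vec x)$, that is $|\delta_+(\vec x)-\delta_-(\vec x)|\le\delta_0(\vec x)$, which is precisely the defining condition of $\pi$ in \eqref{eq:pi_1}. (Alternatively, skip the derivative computation and quote Lemma \ref{lm:pi}: $|\delta_+-\delta_-|\le\delta_0\iff\delta_+\le\delta/2$ and $\delta_-\le\delta/2\iff\vec x\in\pi$.) Chaining the equivalences yields $0\in\median(\vec x)\iff\vec x\in\pi$, and letting $\vec x$ range over $X$ gives the displayed identity $\pi=\{\vec x\in X:0\in\median(\vec x)\}$.

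I do not expect a real obstacle: the only care needed is the bookkeeping of which sign class contributes which slope at $t=0$ — in particular that the zero entries $D_0(\vec x)$ add $+d_i$ to $f'_+(0)$ but $-d_i$ to $f'_-(0)$, which is exactly what produces the symmetric bound by $\delta_0(\vec x)$ — plus the trivial remark that $\vec x\in X$ excludes $\vec x=0$. If one instead argues directly from the permutation-and-$k_0$ definition of $\median(\vec x)$, the effort is the same, merely reorganized around locating $0$ relative to $x_{\sigma(k_0)}$ and $x_{\sigma(k_0+1)}$ and translating the two cases of that definition into the $\delta_\pm$ inequalities.
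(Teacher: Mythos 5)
Your proof is correct, but it takes a different route from the paper's. The paper proves Theorem~\ref{th:median} directly from the permutation-and-$k_0$ definition of $\median(\vec x)$: after sorting the coordinates it splits into the two cases of that definition for the forward implication, and argues the converse by contradiction (if $\min\median(\vec x)>0$ or $\max\median(\vec x)<0$, one of $\delta_\pm$ exceeds $\delta/2$), in both directions routing the characterization of $\pi$ through Lemma~\ref{lm:pi} ($\delta_\pm\le\delta/2$). Notably, the paper states the lemma identifying $\median(\vec x)$ with $\argmin_t\sum_i d_i|t-x_i|$ immediately beforehand but does not actually invoke it in this proof. You, by contrast, make that lemma the engine of the argument: you reduce $0\in\median(\vec x)$ to the first-order optimality condition $f'_-(0)\le 0\le f'_+(0)$ for the convex function $f(t)=\sum_i d_i|t-x_i|$, and your slope computations ($f'_+(0)=\delta_--\delta_++\delta_0$, $f'_-(0)=\delta_--\delta_+-\delta_0$) are correct and unpack exactly into $|\delta_+(\vec x)-\delta_-(\vec x)|\le\delta_0(\vec x)$, the original definition~\eqref{eq:pi_1} of $\pi$, so Lemma~\ref{lm:pi} is not even needed. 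What your approach buys is a single uniform computation with no case split and no contrapositive, and it makes transparent why $\delta_0$ appears symmetrically (the zero coordinates contribute opposite slopes to the two one-sided derivatives); what the paper's approach buys is that it stays entirely at the level of sorted partial sums of degrees, requiring no convex-analysis machinery. One tiny inaccuracy on your side: $f$ attains its minimum because it is convex, piecewise linear and coercive (slopes $\pm\delta$ at $\pm\infty$), not because $\vec x\ne 0$; but this does not affect the argument.
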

\begin{proof}
Suppose $x_1\le\cdots\le x_n$.

$\Rightarrow$:

If there exists $k$ such that $\sum_{i=1}^kd_i-\sum_{i=k+1}^nd_i=0$, then $\median(\vec x)=[x_k,x_{k+1}]$ and $D_-(\vec x)\subset \{1,2,\cdots,k\}$, $D_+(\vec x)\subset\{k+1,\cdots,n\} $. Then we have $\delta_-(\vec x)\le \sum_{i=1}^k d_i=\frac{\delta}{2}$ and $\delta_+(\vec x)\le \sum_{i=k+1}^n d_i=\frac{\delta}{2}$.  Otherwise, there exists $k$ such that $\sum_{i=1}^{k-1}d_i-\sum_{i=k}^nd_i <0<\sum_{i=1}^kd_i-\sum_{i=k+1}^nd_i$, then $\median(\vec x)=\{x_k\}$. Since $0\in \median(\vec x)$, $x_k=0$. Therefore, $D_-(\vec x)\subset \{1,2,\cdots,k-1\}$, $D_+(\vec x)\subset\{k+1,\cdots,n\} $. We still have $\delta_-(\vec x)\le \sum_{i=1}^{k-1} d_i<\frac{\delta}{2}$ and $\delta_+(\vec x)\le \sum_{i=k+1}^n d_i<\frac{\delta}{2}$. In either case, by Lemma~\ref{lm:pi}, we always have $\vec x\in\pi$.

$\Leftarrow$:

We suppose the contrary that $0\notin\median(\vec x)$

When $\min\limits \median(\vec x)>0$, let $x_k=\min\limits \median(\vec x)$,
then we have
$$\sum_{i=1}^{k-1}d_i-\sum_{i=k}^nd_i<0,$$
and $\delta_+(\vec x)\ge \sum_{i=k}^nd_i>\frac{\delta}{2}$ implying that $x\notin\pi$, which is a contradiction.

When $\max \median(\vec x)<0$, let $x_{k+1}=\min\limits \median(\vec x)$, then we have
$$0\le \sum_{i=1}^{k}d_i-\sum_{i=k+1}^nd_i < \sum_{i=1}^{k+1}d_i-\sum_{i=k+2}^nd_i,$$
and $\delta_-(\vec x)\ge \sum_{i=1}^{k+1}d_i>\frac{\delta}{2}$ implying that $\vec x\notin\pi$. Again, it is a contradiction.
\end{proof}

In summary, the compactness of the feasible set $\pi$ in Theorem~\ref{th:compactness} ensures the existence of a minimizer of the continuous function $I(\vec x)$ and the connectivity in Theorem~\ref{th:pi_connected} provides a possibility of finding the minimizer via a descend method. Theorem~\ref{th:median} shows that the projection of $\vec x\in X$ onto $\pi$ can be realized by a simple operation $x-\median(\vec x)$. In Section~\ref{sec:alg}, new cell descent algorithms for seeking the Cheeger cut will be designed according to the cell structure of $\pi$ (Theorem~\ref{th:compactness}).

\subsection{Connection between $h(G)$ and $\mu_2$}

Now let's study the relationship between Cheeger's constant and the eigenvalue of $\Delta_1$.  In \cite{HeinBuhler2010,Chang2014}, it is proved:
\begin{theorem}[Proposition 4.1 in \cite{HeinBuhler2010}; Theorem 5.15 in \cite{Chang2014}]
\label{th:chang2014}
We have
\begin{equation}\label{eq:mu=h}
\mu_2=\min\limits_{\vec x\in\pi}I(\vec x)=h(G).
\end{equation}
\end{theorem}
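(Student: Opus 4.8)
The plan is to prove the chain $\mu_2 = \min_{\vec x\in\pi} I(\vec x) = h(G)$ in two halves, establishing the equality with $h(G)$ first since it is the more combinatorial part, and then tying $\mu_2$ into the variational characterization.

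\textbf{Step 1: $\min_{\vec x\in\pi} I(\vec x) = h(G)$.} For the direction $\min_{\vec x\in\pi} I(\vec x)\le h(G)$, I would take the optimal Cheeger set $S$ achieving $h(G)$ and, without loss of generality (swapping $S$ and $S^c$), assume $\vol(S)\le\vol(S^c)$, so $h(G)=|\partial S|/\vol(S)$. Consider the indicator-type vector $\vec x = \mathbbm{1}_S/\vol(S)$; then $\|\vec x\|_{1,d}=\sum_{i\in S} d_i/\vol(S)=1$, so $\vec x\in X$, and $\delta_+(\vec x)=\vol(S)\le\delta/2$ while $\delta_-(\vec x)=0\le\delta/2$, so by Lemma~\ref{lm:pi} we have $\vec x\in\pi$. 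A direct computation gives $I(\vec x)=\frac12\sum a_{ij}|x_i-x_j| = |\partial S|/\vol(S) = h(G)$, hence $\min_{\vec x\in\pi} I(\vec x)\le h(G)$. For the reverse inequality $\min_{\vec x\in\pi} I(\vec x)\ge h(G)$, I would use the standard coarea / layer-cake argument: given any $\vec x\in\pi$, by Theorem~\ref{th:median} we have $0\in\median(\vec x)$, so the superlevel sets $S_t=\{i: x_i>t\}$ for $t\ge 0$ and sublevel sets for $t<0$ all satisfy $\min\{\vol(S_t),\vol(S_t^c)\}=\vol(S_t)$ (using $\delta_\pm(\vec x)\le\delta/2$ from Lemma~\ref{lm:pi}). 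Then one writes $I(\vec x)=\frac12\sum_{i\sim j}|x_i-x_j| = \int_{-\infty}^{\infty} |\partial S_t|\,\d t$ and $\|\vec x\|_{1,d}=\sum_i d_i|x_i| = \int_{-\infty}^{\infty}\vol(\text{appropriate side of }S_t)\,\d t$, and uses $|\partial S_t|\ge h(G)\cdot\min\{\vol(S_t),\vol(S_t^c)\}$ pointwise in $t$ together with $\|\vec x\|_{1,d}=1$ to conclude $I(\vec x)\ge h(G)$. The compactness from Theorem~\ref{th:compactness} guarantees the minimum is attained.

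\textbf{Step 2: $\mu_2 = \min_{\vec x\in\pi} I(\vec x)$.} Here I would argue both inclusions. For $\mu_2\ge\min_{\vec x\in\pi} I(\vec x)$: an eigenvector $\vec x$ for $\mu_2\ne 0$ lies in $\pi$ by Theorem~\ref{th:pi_key}, and by Theorem~\ref{properties}(3) it satisfies $I(\vec x)=\mu_2$, so $\mu_2\in I(\pi)$ and in particular $\mu_2\ge\min_{\pi} I$. For the reverse, let $\vec x^\ast$ minimize $I$ over $\pi$; I would show $\vec x^\ast$ is a critical point of $\tilde I$ in the sense of \cite{Chang2014}, hence (by Theorem~\ref{th:S(G)=K}) an eigenvector, and that its eigenvalue is $I(\vec x^\ast)$; since $I(\vec x^\ast)=h(G)>0$ when $G$ is connected, this eigenvalue is nonzero and therefore $\ge\mu_2$ (as $\mu_1=0$ is the only zero eigenvalue for connected $G$ by Theorem~\ref{properties}(4)), giving $\min_\pi I\ge\mu_2$. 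The cleanest route is probably to invoke the already-cited external result (Proposition 4.1 of \cite{HeinBuhler2010}, Theorem 5.15 of \cite{Chang2014}) for the identification $\mu_2 = \min_\pi I$ and supply only the self-contained coarea argument for $\min_\pi I = h(G)$.

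\textbf{Main obstacle.} The genuinely delicate point is the lower bound $\min_\pi I\ge h(G)$ — specifically, justifying that the median condition $0\in\median(\vec x)$ makes every threshold set $S_t$ the "small side" so that the pointwise Cheeger inequality $|\partial S_t|\ge h(G)\min\{\vol(S_t),\vol(S_t^c)\}$ can be integrated against $\|\vec x\|_{1,d}=\int(\text{small-side volume})\,\d t$. One must handle the threshold at $t=0$ carefully (the coarea decomposition must split positive and negative parts correctly, and $D_0(\vec x)$ contributes nothing to $\|\vec x\|_{1,d}$). Showing that the minimizer is in fact an eigenvector — i.e.\ that the constrained minimum of the nonsmooth quotient is a critical point in the subgradient sense — is the other technical step, but that is exactly what Theorem~\ref{th:S(G)=K} and the cited results are designed to deliver, so I would lean on them rather than redo the nonsmooth critical point analysis.
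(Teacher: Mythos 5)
The paper does not actually prove this theorem: it is imported verbatim from the cited references (Proposition 4.1 of \cite{HeinBuhler2010} and Theorem 5.15 of \cite{Chang2014}), so there is no internal proof to compare against. Judged on its own terms, your Step 1 is correct and is essentially the standard argument from those references: the indicator vector $\mathbbm{1}_S/\vol(S)$ of an optimal Cheeger set (with $\vol(S)\le\vol(S^c)$) lies in $\pi$ by Lemma~\ref{lm:pi} and has $I=h(G)$, and the coarea/layer-cake computation, with the median condition guaranteeing every superlevel set of the positive part and every sublevel set of the negative part is the ``small side,'' gives $I(\vec x)\ge h(G)\|\vec x\|_{1,d}$ for all $\vec x\in\pi$. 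Your handling of the $t=0$ threshold and the degenerate sets $S_t\in\{\emptyset,V\}$ is the right thing to worry about, and both work out.

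The genuine gap is in the second half of Step 2, the inequality $\min_\pi I\ge\mu_2$. You propose to argue that a minimizer $\vec x^\ast$ of $I$ over $\pi$ is a critical point of $\tilde I$ and hence an eigenvector via Theorem~\ref{th:S(G)=K}. But $\pi$ is a proper closed subset of $X$ with nonempty relative boundary, and a constrained minimizer sitting on $\partial\pi$ need not be a critical point of $\tilde I$ on all of $X$; Theorem~\ref{th:S(G)=K} characterizes critical points of $\tilde I$ on $X$, not of $I|_\pi$, so it does not directly deliver what you need. The standard repair --- and what the cited results actually do --- is to avoid ``minimizer $\Rightarrow$ critical point'' entirely: one shows directly that the specific binary vector $\mathbbm{1}_S/\vol(S)$ is an eigenvector with eigenvalue $h(G)$ (this requires constructing admissible $z_{ij}\in\sgn(x_i-x_j)$ with the right row sums, a nontrivial feasibility argument), so $h(G)$ is a nonzero eigenvalue; then Theorem~\ref{th:pi_key} and Theorem~\ref{properties}(3) show every nonzero eigenvalue $\mu$ has an eigenvector in $\pi$ with $I=\mu\ge\min_\pi I=h(G)$, whence the smallest nonzero eigenvalue $\mu_2$ equals $h(G)$. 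You do acknowledge the difficulty and fall back on citing the external result for $\mu_2=\min_\pi I$, which is legitimate (it is exactly what the paper itself does), but as a self-contained proof the critical-point step as written would fail. One further small caveat: your use of Theorem~\ref{th:pi_key} and of ``$\mu_2\ne 0$'' silently assumes $G$ is connected, which should be stated.
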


Combining Theorems~\ref{th:median} and~\ref{th:chang2014} leads to~\cite{Chung1997,SzlamBresson2010}
\begin{equation}\label{th:Chung_d}
h(G)=\min\limits_{x\ne0,\; 0\in\median(\vec x)}\frac{I(\vec x)}{\|\vec x\|_{1,d}}=\min\limits_{x\;\text{nonconstant} \\ \;\alpha\in\median(\vec x)}\frac{I(\vec x)}{\|x-\alpha\vec{1}\|_{1,d}}.
\end{equation}

Theorem~\ref{th:chang2014} means that we could minimize the original discrete graph cut problem through solving an equivalent continuous function optimization problem.
As mentioned in Remark~\ref{remark:cut},
based on corresponding eigenvector $\vec x$,
via $\vec x\simeq \hat{\vec x}$ (\ie finding the equivalent binary vector $\hat{x}$) by Theorem~\ref{th:binary},
we are able to produce a unique Cheeger cut $(A,A^c)$ by setting either $A=D_+(\vec x)$ or $A=D_-(\vec x)$.
On the other hand,
for any Cheeger cut $(A,A^c)$,
we can also determine at most two binary eigenvectors $\hat{x}=\vec{1}_A/\vol(A)$ or $\hat{x}=\vec{1}_{A^c}/\vol(A^c)$.
Moreover, starting from an eigenvector $\vec x$ (\ie a Cheeger cut),
Theorem~\ref{th:connect} below implies that
it is possible for us to obtain a new binary eigenvector (\ie a new Cheeger cut) by investigating the connectedness of resulting subgraphs from the pre-existing Cheeger cut.
Actually,
the connectedness of those subgraphs
is closely related to the nodal domains of eigenvectors~\cite{Chang2014}.





\begin{theorem}\label{th:connect}
Let $G=(V,E)$ be connected, the following statements hold.
\begin{description}
\item(1) If $(A,A^c)$ is a Cheeger cut and $A$ is a disjoint union of two nonempty sets $A_1$, $A_2$ satisfying $E(A_1,A_2)=\emptyset$, then $\vol(A)\le \vol(A^c)$, and $(A_1,A_1^c)$, $(A_2,A_2^c)$ are also Cheeger cuts with $\frac{|\partial A_i|}{\vol(A_i)}=h(G),i=1,2$.
\item(2) If $(A,A^c)$ is a Cheeger cut, then one of $A$, $A^c$ is connected.


\item(3) There exists a Cheeger cut $(A,A^c)$ such that $A$, $A^c$ are both connected.
\end{description}
\end{theorem}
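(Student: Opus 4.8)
The plan is to prove the three statements in the order (1), (2), (3), since (1) supplies the structural tool, (2) follows from it essentially by contraposition, and (3) is obtained by iterating (1).

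For statement (1), I would start from the hypothesis that $(A,A^c)$ is a Cheeger cut with $A = A_1 \sqcup A_2$ and $E(A_1,A_2) = \emptyset$. The key arithmetic observation is that since no edges run between $A_1$ and $A_2$, the boundary splits additively: $|\partial A| = |\partial A_1| + |\partial A_2|$ where the boundaries of $A_i$ are taken in $G$; likewise $\vol(A) = \vol(A_1) + \vol(A_2)$. Write $h = h(G)$. Since $(A,A^c)$ is a Cheeger cut, $\frac{|\partial A|}{\min\{\vol(A),\vol(A^c)\}} = h$. I first want to rule out $\vol(A) > \vol(A^c)$: if that held, then $\min\{\vol(A),\vol(A^c)\} = \vol(A^c)$, so $|\partial A| = h\,\vol(A^c)$. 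But then, using $|\partial A| = |\partial A_1| + |\partial A_2|$ and $\vol(A) = \vol(A_1)+\vol(A_2) > \vol(A^c)$, at least one $A_i$, say $A_1$, would satisfy $\frac{|\partial A_1|}{\vol(A_1)} < \frac{|\partial A_1|+|\partial A_2|}{\vol(A_1)+\vol(A_2)}$ is the wrong direction — so instead I use the mediant inequality: $\min_i \frac{|\partial A_i|}{\vol(A_i)} \le \frac{|\partial A_1|+|\partial A_2|}{\vol(A_1)+\vol(A_2)} = \frac{|\partial A|}{\vol(A)} < \frac{|\partial A|}{\vol(A^c)} = h$, and since each $\vol(A_i) \le \vol(A) \le \vol(A) + \vol(A^c)$... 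I need $\vol(A_i) \le \vol(A_i^c)$ to conclude $\frac{|\partial A_i|}{\min\{\vol(A_i),\vol(A_i^c)\}} \le \frac{|\partial A_i|}{\vol(A_i)} < h$, contradicting the definition of $h$. This forces $\vol(A) \le \vol(A^c)$, hence $\vol(A_i) \le \vol(A) \le \vol(A^c) \le \vol(A_i^c)$ for each $i$, so $\min\{\vol(A_i),\vol(A_i^c)\} = \vol(A_i)$. Now the mediant inequality again gives $\min_i \frac{|\partial A_i|}{\vol(A_i)} \le \frac{|\partial A|}{\vol(A)} = h$ (the last equality because $\vol(A)\le\vol(A^c)$); combined with $\frac{|\partial A_i|}{\vol(A_i)} \ge h$ from the definition of the Cheeger constant, both ratios equal $h$, and then the mediant equality case forces this. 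So $(A_i,A_i^c)$ are Cheeger cuts.

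For statement (2), suppose neither $A$ nor $A^c$ is connected. If $A$ is disconnected, it splits as $A_1 \sqcup A_2$ with $E(A_1,A_2)=\emptyset$ (take $A_1$ a connected component, $A_2$ the rest), so by (1) $\vol(A) \le \vol(A^c)$. Symmetrically, if $A^c$ is disconnected, $\vol(A^c) \le \vol(A)$. If both are disconnected we get $\vol(A) = \vol(A^c)$; in any case applying (1) to the disconnected one produces a Cheeger cut $(A_1,A_1^c)$ where $A_1$ is a proper nonempty subset with strictly smaller volume. The point is to iterate: each application of (1) to a disconnected piece strictly decreases the number of vertices in the "small" side (or strictly decreases its volume), so the process terminates at a Cheeger cut $(B,B^c)$ with $B$ connected. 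That already shows — wait, I need both; so I should phrase (2) as: repeatedly peel off connected components until one side is connected, which must happen since $|V|$ is finite. This yields a Cheeger cut with a connected side, but (2) asks that for the \emph{given} cut one of the two sides is connected. So the argument must instead be: if $A$ is disconnected, write $A = A_1 \sqcup A_2$; I claim $A^c$ is then connected. Suppose not; then I can also split $A^c$, and I would want to derive $h(G) < h(G)$ or violate connectedness of $G$ — here is where I expect the main obstacle: showing that simultaneous disconnection of both sides contradicts connectedness of $G$ together with the Cheeger-cut optimality. I would push on the fact that from (1), all four pieces $A_1, A_2, A_1^c$-type sets are Cheeger cuts, and use a counting/volume argument plus the connectedness of $G$ (some edge must join the blocks) to reach a contradiction, likely by exhibiting a set with ratio strictly below $h(G)$.

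For statement (3), I would take any Cheeger cut $(A,A^c)$; by (2) one side, say $A$, is connected. If $A^c$ is also connected we are done. Otherwise $A^c = B_1 \sqcup B_2$ with no edges between, and by (1) each $(B_i, B_i^c)$ is a Cheeger cut with $\vol(A^c) \le \vol(A)$ and $\vol(B_i) \le \vol(B_i^c)$; pick $(B_1, B_1^c)$. Now $B_1^c = A \cup B_2$, and since $G$ is connected there must be an edge from $B_2$ to $A$ (there are none from $B_2$ to $B_1$), while $A$ is connected — so $B_1^c = A \sqcup B_2$ glued along at least one edge is connected. Thus $(B_1, B_1^c)$ has $B_1^c$ connected, and $B_1$ has strictly fewer vertices than $A^c$ had. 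Iterating, the small side strictly shrinks in cardinality each time it is disconnected, so after finitely many steps we reach a Cheeger cut both of whose sides are connected. The main work throughout is the careful bookkeeping of the mediant inequality and its equality case in (1), and resolving the genuine subtlety in (2) about why both sides cannot be disconnected at once.
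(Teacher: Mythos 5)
Your overall architecture --- the mediant inequality plus additivity of $|\partial\cdot|$ and $\vol(\cdot)$ for (1), contraposition via (1) for (2), and peeling off components for (3) --- is the same as the paper's, but two steps are genuinely missing. In (1), your attempt to rule out $\vol(A)>\vol(A^c)$ stalls exactly where you flag it: to contradict the definition of $h(G)$ you need $\vol(A_i)\le\vol(A_i^c)$ for the piece $A_i$ minimizing $|\partial A_i|/\vol(A_i)$, and nothing in that case gives it to you. The missing observation (used twice in the paper's proof of (1)) is that the \emph{complementary} ratio can always be controlled: $|\partial A_1^c|=|\partial A_1|\le|\partial A|=|\partial A^c|$ and $\vol(A_1^c)>\vol(A^c)$ (because $A_1\subsetneq A$) give $|\partial A_1^c|/\vol(A_1^c)<|\partial A^c|/\vol(A^c)\le h(G)$; together with $|\partial A_1|/\vol(A_1)<h(G)$ and the identity $|\partial S|/\min\{\vol(S),\vol(S^c)\}=\max\{|\partial S|/\vol(S),\,|\partial S^c|/\vol(S^c)\}$ this contradicts the definition of $h(G)$ regardless of which of $\vol(A_1)$, $\vol(A_1^c)$ is smaller.

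The larger gap is (2), where you only announce the goal (``exhibit a set with ratio strictly below $h(G)$'') without the construction, and that construction is the heart of the theorem. The paper's argument: if $A=A_1\sqcup A_2$ and $A^c=B_1\sqcup B_2$ are both disconnected, part (1) gives $\vol(A)=\vol(A^c)$ and $|\partial A_i|/\vol(A_i)=|\partial B_j|/\vol(B_j)=h(G)$ for all four pieces; connectedness of $G$ forces at least three of the sets $E(A_i,B_j)$ to be nonempty, so after relabelling one may assume $|E(A_1,B_1)|>0$ and $|E(A_2,B_2)|>0$; setting $D=A_1\sqcup B_1$ one computes $|\partial D|=|E(A_1,B_2)|+|E(A_2,B_1)|$ while $h(G)\vol(D)=|\partial A_1|+|\partial B_1|=2|E(A_1,B_1)|+|E(A_1,B_2)|+|E(A_2,B_1)|>|\partial D|$, and symmetrically for $D^c$, so $(D,D^c)$ would beat $h(G)$. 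Your part (3) is essentially the paper's and is fine once (1) and (2) are secured, except that you should split off a \emph{connected component} of the disconnected side (or note that every component of $B_2$ must attach to $A$), not merely ``at least one edge from $B_2$ to $A$'', to guarantee the glued side is connected; the paper's version does this and finishes in one step rather than by iteration.
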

\begin{proof}

(1) One can easily check that $\partial A$ is the disjoint union of $\partial A_1$ and $\partial A_2$, it follows $|\partial A|=|\partial A_1|+|\partial A_2|$ and $\vol(A)=\vol(A_1)+\vol(A_2)$.
 Therefore,
 \begin{equation}\label{eq:vol1}
 \frac{|\partial A|}{\vol(A)}=\frac{|\partial A_1|+|\partial A_2|}{\vol(A_1)+\vol(A_2)}\ge \min\limits\left\{\frac{|\partial A_1|}{\vol(A_1)},\frac{|\partial A_2|}{\vol(A_2)}\right\}.
 \end{equation}

Meanwhile, $|\partial A_1^c|=|\partial A_1|<|\partial A|=|\partial A^c|$ and $A_1\varsubsetneqq A$ implies $\vol(A^c)<\vol(A_1^c)$. Then
\begin{equation}\label{eq:vol2}
\frac{|\partial A_1^c|}{\vol(A_1^c)}<\frac{|\partial A^c|}{\vol(A^c)}.
\end{equation}

We claim:
$$\frac{|\partial A_1|}{\vol(A_1)}=\frac{|\partial A_2|}{\vol(A_2)}=\frac{|\partial A|}{\vol(A)}.$$
For otherwise, according to Eq.~\eqref{eq:vol1}, we may assume without loss of generality that
$$\frac{|\partial A_1|}{\vol(A_1)}
\leq \frac{|\partial A_2|}{\vol(A_2)}.$$
Then we have
$$\frac{|\partial A_1|}{\vol(A_1)}
<
\frac{|\partial A|}{\vol(A)}.$$
It follows from Eq.~\eqref{eq:vol2},
$$\max\left\{\frac{|\partial A_1|}{\vol(A_1)},\frac{|\partial A_1^c|}{\vol(A_1^c)}\right\}<\max\left\{\frac{|\partial A|}{\vol(A)},\frac{|\partial A^c|}{\vol(A^c)}\right\}=h(G).$$
This is a contradiction.

Now we turn to prove: $\vol(A)\le \vol(A^c)$. If not, \ie $\vol(A)> \vol(A^c)$, then combining with Eq.~\eqref{eq:vol2}
yields
$$\max\left\{\frac{|\partial A_1|}{\vol(A_1)},\frac{|\partial A_1^c|}{\vol(A_1^c)}\right\}= \max\left\{\frac{|\partial A|}{\vol(A)},\frac{|\partial A_1^c|}{\vol(A_1^c)}\right\}<\frac{|\partial A^c|}{\vol(A^c)}=h(G).$$
Again, it is a contradiction. Thus
$$\max\left\{\frac{|\partial A_1|}{\vol(A_1)},\frac{|\partial A_1^c|}{\vol(A_1^c)}\right\}= \max\left\{\frac{|\partial A|}{\vol(A)},\frac{|\partial A_1^c|}{\vol(A_1^c)}\right\}=
\frac{|\partial A|}{\vol(A)}=
\max\left\{\frac{|\partial A|}{\vol(A)},\frac{|\partial A^c|}{\vol(A^c)}\right\}=h(G),$$
\ie $(A_1,A_1^c)$ is a Cheeger cut.
Similarly, $(A_2,A_2^c)$ is also a Cheeger cut.

(2) Suppose the contrary. There is  a Cheeger cut $(A,A^c)$ such that $A=A_1\sqcup A_2$, $A^c=B_1\sqcup B_2$, where $A_1,A_2,B_1,B_2$ are four disjoint nonempty sets satisfying $E(A_1,A_2)=\emptyset$, $E(B_1,B_2)=\emptyset$. By (1), we know $\vol(A)=\vol(A^c)$. That is, $\vol(A_1)+\vol(A_2)=\vol(B_1)+\vol(B_2)$. The sketch  for their possible connection is shown below.
\begin{center}
\includegraphics{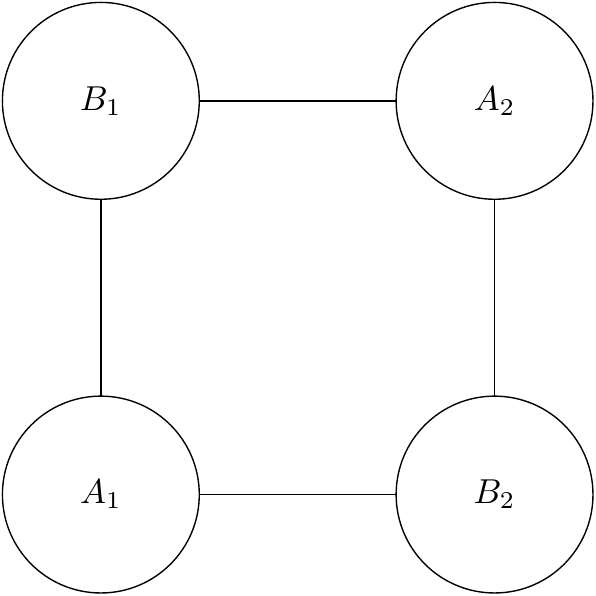}
\end{center}

According to (1), $(A_1,A_1^c)$, $(A_2,A_2^c)$, $(B_1,B_1^c)$, $(B_2,B_2^c)$ are all Cheeger cuts.
Since $\partial A_1 = E(A_1,B_1)\sqcup E(A_1,B_2)$, we have
$$|\partial A_1| = |E(A_1,B_1)|+ |E(A_1,B_2)|.$$ Similarly, we obtain
$$|\partial A_2| = |E(A_2,B_1)|+ |E(A_2,B_2)|,$$
$$|\partial B_1| = |E(B_1,A_1)|+ |E(B_1,A_2)|,$$
$$|\partial B_2| = |E(B_2,A_1)|+ |E(B_2,A_2)|.$$
The fact $\frac{|\partial A_1|}{\vol(A_1)}=h(G)$ leads to
$$|E(A_1,B_1)|+ |E(A_1,B_2)| = |\partial A_1| =h(G) \vol(A_1).$$
Analogously, we have
$$|E(B_1,A_1)|+ |E(B_1,A_2)|= h(G) \vol(B_1).$$

Since $G$ is connected, at least three of $E(A_1,B_1), E(A_1,B_2), E(A_2,B_2), E(A_2,B_1)$ are nonempty. Then we may assume that $|E(A_1,B_1)|>0$, $|E(A_2,B_2)|>0$.

Let $D=A_1\sqcup B_1$ and $D^c=A_2\sqcup B_2$. Then we have
$$|\partial D|=|E(A_1,B_2)|+|E(A_2,B_1)|,$$
and
$$\vol(D)=\vol(A_1)+\vol(B_1)=\frac{1}{h(G)}\left(2|E(A_1,B_1)|+ |E(A_1,B_2)|+|E(A_2,B_1)|\right).$$
Thus,
$$\frac{|\partial D|}{\vol(D)}=h(G)\frac{|E(A_1,B_2)|+|E(A_2,B_1)|}{2|E(A_1,B_1)|+ |E(A_1,B_2)|+|E(A_2,B_1)|}<h(G).$$
Similarly, $\frac{|\partial D^c|}{\vol(D^c)}<h(G)$, we obtain a contradiction.

(3) We suppose that $A$ is not connected, then by (2) we know that $A^c$ is connected. There are finite disjoint nonempty sets $A_1$, $A_2$, $\cdots$, $A_k$ such that $A=\bigsqcup_{i=1}^k A_i$ with $k\ge 2$. Each subgraph $(A_i,E(A_i))$ is connected. By noting
that $|E(A_i,A^c)|>0$, $i=1,2,\cdots, k$, we have $A_1^c=A^c\cup \bigsqcup_{i=2}^kA_i$ is connected.
That is, both $A_1$ and $A_1^c$ are connected. Further by (1), we know that $(A_1,A_1^c)$ is a Cheeger cut.
\end{proof}

\begin{example}
The Cheeger value of the following graph is $\frac{1}{5}$. Let $A_1=\{1,2,3\}$, $A_2=\{11,12,13\}$, $A=A_1\sqcup A_2$. It can be easily verified that $(A,A^c)$ is a Cheeger cut.
\begin{center}
\includegraphics{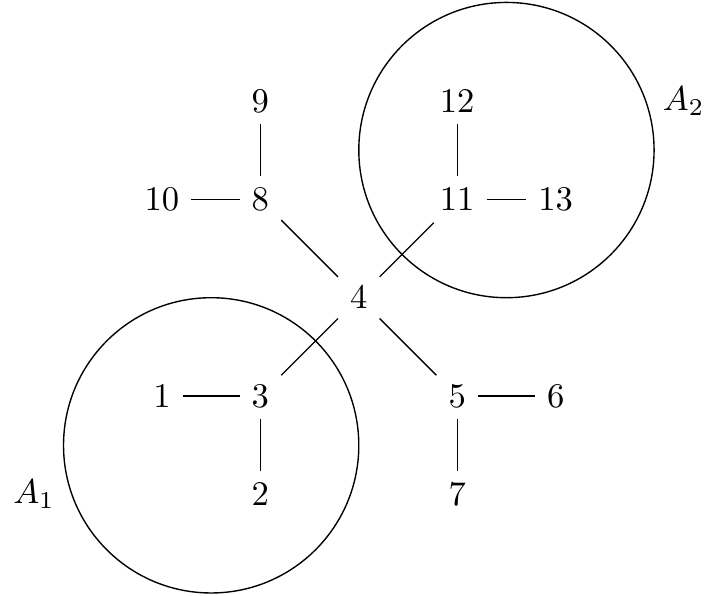}
\end{center}
It is seen that,  $A^c$ is connected but $A$ is not. One can also easily check that,  $\vol(A)=10< \vol(A^c)=14$; both $(A_1,A_1^c)$ and $(A_2,A_2^c)$ are Cheeger cuts; both $(A_1, E(A_1))$ and $(A_1^c, E(A_1^c))$ are connected subgraphs. Theorem~\ref{th:connect} is verified on this graph.
\end{example}

Theorem~\ref{th:chang2014} is the cornerstone of our noval cell descent algorithms delineated in Section~\ref{sec:alg}. Theorem~\ref{th:connect} gives us some hint for choosing an initial cut. That is, it is better to start from the initial cut each branch of which is connected.


\section{Algorithms for $1$-Laplacian Cheeger cut}
\label{sec:alg}



According to Theorems~\ref{th:compactness} and \ref{th:chang2014}, we have
\begin{equation}\label{eq:h=mu=cell}
h(G) = \mu_2=\min\limits_{\vec x\in\pi}I(\vec x) = \min\limits_{\Delta\in\Pi} \min\limits_{\vec x\in\bar{\Delta}} I(\vec x),
\end{equation}
from which there are two different approaches in treating the Cheeger cut problem. The first one is to regard it as a discrete combination optimization problem. By Theorem~\ref{th:compactness}, the minimizer of $I(\vec x)$ in $\pi$ exists, it can be found by solving a convex programming on each of the cells, and then by choosing the smallest one among them. However, the number of cells in $\Pi$ must be larger than $\sum_{k=0}^{[\frac{n-1}{2}]} C_n^{k+1}2^{k+1}=3^{[\frac{n+1}{2}]}-1$, which is reminiscent of NP-hardness of the Cheeger cut problem.
 The other one is to regard it as a continuous optimization problem on the feasible set $\pi$. Both IP (see Eq.~\eqref{eq:4step}) and SD (see Eqs.~\eqref{eq:4step-gsd} and \eqref{eq:sd}) methods have been used, but usually only local minimizers are obtained.

In this section, we propose a new CD framework in solving the combinatorial optimization problem. Since the objective function is convex on each cell, we produce a sequence of cells $\triangle_k$ in $\pi$, on which the minimizers $\{\vec x^k\}$ are decreasing.

In each step, we search a decreasing direction of $I$ in $\pi$ at the minimizer $\vec x^k$ in order to transfer the search from one cell to the other.

We are going to study the strategy for generating a decreasing direction.

\begin{lemma}\label{lm:x0}
Let $T\subset \mathbb{R}^n-\{0\}$ be a bounded closed set. We have
\[
\vec x^0=\argmin\limits_{\vec x\in T}\frac{I(\vec x)}{\|\vec x\|_{1,d}} \Leftrightarrow \vec x^0=\argmin\limits_{\vec x\in T} \{I(\vec x)-\lambda^0 \|\vec x\|_{1,d}\},
\]
where $\lambda^0=\frac{I(\vec x^0)}{\|\vec x^0\|_{1,d}}$.
\end{lemma}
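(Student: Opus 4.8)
The plan is to prove the two implications by exploiting the positivity of $\|\vec x\|_{1,d}$ on $T$ (since $0 \notin T$ and $T$ is compact, there is a constant $c > 0$ with $\|\vec x\|_{1,d} \ge c$ for all $\vec x \in T$). Set $\lambda^0 = I(\vec x^0)/\|\vec x^0\|_{1,d}$, which by the definition of $\vec x^0$ as the ratio-minimizer equals $\min_{\vec x\in T} I(\vec x)/\|\vec x\|_{1,d}$. The whole argument rests on the elementary equivalence, valid for any $\vec x \in T$,
\[
\frac{I(\vec x)}{\|\vec x\|_{1,d}} \ge \lambda^0 \quad\Longleftrightarrow\quad I(\vec x) - \lambda^0\|\vec x\|_{1,d} \ge 0,
\]
which holds because $\|\vec x\|_{1,d} > 0$; and similarly with $\ge$ replaced by $=$.

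First I would prove ($\Rightarrow$). Assume $\vec x^0$ minimizes the ratio over $T$. Then for every $\vec x \in T$ we have $I(\vec x)/\|\vec x\|_{1,d} \ge \lambda^0$, hence $I(\vec x) - \lambda^0\|\vec x\|_{1,d} \ge 0$. On the other hand, $I(\vec x^0) - \lambda^0\|\vec x^0\|_{1,d} = \|\vec x^0\|_{1,d}\bigl(I(\vec x^0)/\|\vec x^0\|_{1,d} - \lambda^0\bigr) = 0$. Therefore $\vec x^0$ attains the value $0 = \min_{\vec x\in T}\{I(\vec x) - \lambda^0\|\vec x\|_{1,d}\}$, i.e.\ $\vec x^0 = \argmin_{\vec x\in T}\{I(\vec x) - \lambda^0\|\vec x\|_{1,d}\}$.

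Next I would prove ($\Leftarrow$). Here $\lambda^0$ is still defined as $I(\vec x^0)/\|\vec x^0\|_{1,d}$, and we assume $\vec x^0$ minimizes $I(\vec x) - \lambda^0\|\vec x\|_{1,d}$ over $T$. As just computed, the minimum value is $I(\vec x^0) - \lambda^0\|\vec x^0\|_{1,d} = 0$. Hence for every $\vec x \in T$, $I(\vec x) - \lambda^0\|\vec x\|_{1,d} \ge 0$, and dividing by $\|\vec x\|_{1,d} > 0$ gives $I(\vec x)/\|\vec x\|_{1,d} \ge \lambda^0 = I(\vec x^0)/\|\vec x^0\|_{1,d}$. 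Thus $\vec x^0$ minimizes the ratio over $T$. (The existence of the minimizers on both sides is guaranteed by compactness of $T$ and continuity of $I$ and $\|\cdot\|_{1,d}$, together with $\|\cdot\|_{1,d}$ being bounded away from zero, so the statement is not vacuous.)

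There is essentially no hard part: the proposition is the standard Dinkelbach-type linearization of a fractional objective, and the only point requiring care is that $\lambda^0$ must be held fixed at the value $I(\vec x^0)/\|\vec x^0\|_{1,d}$ throughout, and that one invokes compactness plus $0 \notin T$ to ensure $\|\vec x\|_{1,d}$ is bounded below by a positive constant so that division is legitimate and minimizers exist. I would keep the write-up to the short chain of equivalences above.
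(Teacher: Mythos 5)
Your proof is correct and follows essentially the same argument as the paper's: multiply/divide by $\|\vec x\|_{1,d}>0$ to pass between the ratio inequality $I(\vec x)/\|\vec x\|_{1,d}\ge\lambda^0$ and the nonnegativity of $I(\vec x)-\lambda^0\|\vec x\|_{1,d}$, and note that $\vec x^0$ attains the value $0$ in the linearized problem. Your added remarks on $0\notin T$ guaranteeing $\|\vec x\|_{1,d}$ is bounded away from zero and on compactness ensuring existence of minimizers are sensible but not a departure from the paper's route.
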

\begin{proof}
$\Rightarrow$:

Since $\vec x^0=\argmin\limits_{\vec x\in T}\frac{I(\vec x)}{\|\vec x\|_{1,d}} $, we have
$$\frac{I(\vec x)}{\|\vec x\|_{1,d}}\ge \frac{I(\vec x^0)}{\|\vec x^0\|_{1,d}}=\lambda^0,\,\,\,\,\,\forall\, \vec x\in T .$$
Therefore, $I(\vec x)-\lambda^0\|\vec x\|_{1,d}\ge 0$. Now $I(\vec x^0)-\lambda^0\|\vec x^0\|_{1,d}=0$, \ie $\vec x^0=\argmin\limits_{\vec x\in T} \{I(\vec x)-\lambda^0 \|\vec x\|_{1,d}\}$.

$\Leftarrow$:

Since $\vec x^0=\argmin\limits_{\vec x\in T} \{I(\vec x)-\lambda^0 \|\vec x\|_{1,d}\}$, we have
$$I(\vec x)-\lambda^0 \|\vec x\|_{1,d}\ge I(\vec x^0)-\lambda^0\|\vec x^0\|_{1,d}=0,\,\,\,\,\, \forall\,\vec x\in T.$$
Therefore $\frac{I(\vec x)}{\|\vec x\|_{1,d}}\ge \lambda^0=\frac{I(\vec x^0)}{\|\vec x^0\|_{1,d}}$, then $\vec x^0=\argmin\limits_{\vec x\in T}\frac{I(\vec x)}{\|\vec x\|_{1,d}} $.
\end{proof}

\begin{lemma}\label{lm:x1}
Let $T\subset \mathbb{R}^n-\{0\}$  be a bounded closed set and $\vec x^0\in T$ be a given point. Assume
\[
 \vec x^1=\argmin\limits_{\vec x\in T} \{I(\vec x)-\lambda^0 \|\vec x\|_{1,d}\},
\]
then $\lambda^1\le \lambda^0$, where $\lambda^0=\frac{I(\vec x^0)}{\|\vec x^0\|_{1,d}}$
and $\lambda^1=\frac{I(\vec x^1)}{\|\vec x^1\|_{1,d}}$.
\end{lemma}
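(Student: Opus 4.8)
The plan is to use the defining optimality of $\vec x^1$ directly, testing the competitor $\vec x^0$, which lies in the same feasible set $T$. This is the same mechanism that makes the inner $\argmin$ step of the algorithm monotone, so the proof should be short.

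First I would record what optimality of $\vec x^1$ buys us: since $\vec x^1=\argmin_{\vec x\in T}\{I(\vec x)-\lambda^0\|\vec x\|_{1,d}\}$ and $\vec x^0\in T$, we have $I(\vec x^1)-\lambda^0\|\vec x^1\|_{1,d}\le I(\vec x^0)-\lambda^0\|\vec x^0\|_{1,d}$. Here $\vec x^0\ne 0$ because $T\subset\mathbb{R}^n-\{0\}$, so $\lambda^0=I(\vec x^0)/\|\vec x^0\|_{1,d}$ is well defined and the right-hand side equals exactly $0$. Hence $I(\vec x^1)\le\lambda^0\|\vec x^1\|_{1,d}$.

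Next I would divide through by $\|\vec x^1\|_{1,d}$. This is legitimate because $\vec x^1\in T\subset\mathbb{R}^n-\{0\}$ forces $\|\vec x^1\|_{1,d}=\sum_{i=1}^n d_i|x^1_i|>0$. Dividing gives $\lambda^1=I(\vec x^1)/\|\vec x^1\|_{1,d}\le\lambda^0$, which is the assertion. I would also remark that the $\argmin$ defining $\vec x^1$ is attained, since $I(\cdot)-\lambda^0\|\cdot\|_{1,d}$ is continuous (both $I$ and $\|\cdot\|_{1,d}$ are) and $T$ is bounded and closed, hence compact; so $\vec x^1$ is well defined to begin with.

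There is essentially no obstacle; the only point that deserves an explicit sentence is the non-degeneracy $\|\vec x^1\|_{1,d}>0$ that licenses the division, and this is immediate from the hypothesis $0\notin T$. (In effect this lemma is the quantitative complement of Lemma~\ref{lm:x0}: the latter characterizes the fixed point $\lambda^0=\lambda^1$, the former shows that short of a fixed point the inner minimization strictly cannot increase the Rayleigh quotient.)
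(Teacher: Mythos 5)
Your proposal is correct and follows the same route as the paper's own proof: test the optimality of $\vec x^1$ against the competitor $\vec x^0\in T$, observe that $I(\vec x^0)-\lambda^0\|\vec x^0\|_{1,d}=0$, and divide by $\|\vec x^1\|_{1,d}>0$. Your added remarks on attainment of the $\argmin$ and the positivity of $\|\vec x^1\|_{1,d}$ are sound (the paper leaves them implicit) but do not change the argument.
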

\begin{proof}
Since $\vec x^1=\argmin\limits_{\vec x\in T} \{I(\vec x)-\lambda^0 \|\vec x\|_{1,d}\}$, we have
$$I(\vec x)-\lambda^0 \|\vec x\|_{1,d}\ge I(\vec x^1)-\lambda^0\|\vec x^1\|_{1,d},\,\,\,\,\forall\, \vec x\in T.$$
Taking $x=\vec x^0$ in the above equation, we have
$$0=I(\vec x^0)-\lambda^0 \|\vec x^0\|_{1,d}\ge I(\vec x^1)-\lambda^0\|\vec x^1\|_{1,d}.$$

Therefore, $\lambda^0\ge\frac{I(\vec x^1)}{\|\vec x^1\|_{1,d}}=\lambda^1$.
\end{proof}

Based on Lemmas \ref{lm:x0} and~\ref{lm:x1}, we obtain

\begin{theorem}Let $T\subset \{\vec x\in \mathbb{R}^n-\{0\}:0\in\median(\vec x)\}$ be a bounded closed set containing $\pi$. The two-step iterative scheme from any initial point $\vec x^0\in\pi$
\begin{numcases}{}
\vec x^{k+1}=\argmin\limits_{\vec x\in T} I(\vec x)-\lambda^k \|\vec x\|_{1,d}, \label{eq:twostep_x}
\\
\lambda^{k+1}=\frac{I(\vec x^{k+1})}{\|\vec x^{k+1}\|_{1,d}},
\label{eq:twostep_lam}
\end{numcases}
produces a sequence $\lambda^k$ convergent to the global minimum of $I(\vec x)$ in $\pi$.
\end{theorem}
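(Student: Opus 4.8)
The plan is to show the sequence $\lambda^k$ is monotone, bounded below, hence convergent, and then argue that its limit must be the global minimum $\mu_2 = \min_{\vec x\in\pi} I(\vec x)$. First I would observe that by construction $\vec x^{k+1}$ minimizes $I(\vec x) - \lambda^k\|\vec x\|_{1,d}$ over $T$, so Lemma~\ref{lm:x1} applies with $\vec x^0 = \vec x^k$, $\vec x^1 = \vec x^{k+1}$, giving $\lambda^{k+1}\le\lambda^k$. Since $T\subset\{\vec x: 0\in\median(\vec x)\}$, every $\vec x^k$ lies in this set, and by Theorem~\ref{th:chang2014} together with Eq.~\eqref{th:Chung_d} we have $\lambda^k = \frac{I(\vec x^k)}{\|\vec x^k\|_{1,d}} \ge h(G) = \mu_2$ for all $k$ (because the ratio $I/\|\cdot\|_{1,d}$ is scale-invariant, we may normalize $\vec x^k$ into $\pi$ without changing $\lambda^k$). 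Therefore $\lambda^k$ is nonincreasing and bounded below by $\mu_2$, so it converges to some $\lambda^*\ge\mu_2$.

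Next I would identify $\lambda^*$. Using compactness of $T$ (bounded and closed) and continuity of $I$ and $\|\cdot\|_{1,d}$, pass to a subsequence along which $\vec x^k\to\vec x^*\in T$; note $\vec x^*\ne 0$ since $\|\vec x^*\|_{1,d} = \lim\|\vec x^k\|_{1,d}$ and one can check the iterates stay bounded away from $0$ (or rescale so that $\|\vec x^k\|_{1,d}=1$, using scale-invariance of the whole scheme). Then $\lambda^* = \frac{I(\vec x^*)}{\|\vec x^*\|_{1,d}}$. The key step is to show $\vec x^*$ minimizes $I(\vec x) - \lambda^*\|\vec x\|_{1,d}$ over $T$: taking the limit in Eq.~\eqref{eq:twostep_x}, for every $\vec y\in T$ we have $I(\vec x^{k+1}) - \lambda^k\|\vec x^{k+1}\|_{1,d} \le I(\vec y) - \lambda^k\|\vec y\|_{1,d}$; letting $k\to\infty$ (along the subsequence) and using $\lambda^k\to\lambda^*$ and continuity gives $I(\vec x^*) - \lambda^*\|\vec x^*\|_{1,d}\le I(\vec y) - \lambda^*\|\vec y\|_{1,d}$. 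Since the left side equals $0$ by definition of $\lambda^*$, this says $I(\vec y) - \lambda^*\|\vec y\|_{1,d}\ge 0$ for all $\vec y\in T$, i.e. $\lambda^* = \min_{\vec y\in T}\frac{I(\vec y)}{\|\vec y\|_{1,d}}$ by Lemma~\ref{lm:x0}. Restricting to $\pi\subset T$ and invoking $\min_{\vec y\in\pi}\frac{I(\vec y)}{\|\vec y\|_{1,d}} = \min_{\vec y\in\pi} I(\vec y) = \mu_2$ from Theorem~\ref{th:chang2014} (and the fact that $T\subset\{0\in\median(\vec x)\}$ forces the minimum over $T$ to equal the minimum over $\pi$, again by scale-invariance and Theorem~\ref{th:median}), we conclude $\lambda^* = \mu_2$. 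Since the full sequence $\lambda^k$ is monotone and a subsequence converges to $\mu_2$, the whole sequence converges to $\mu_2$.

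The main obstacle I anticipate is the step that reduces $\min_{\vec y\in T}\frac{I(\vec y)}{\|\vec y\|_{1,d}}$ to $\min_{\vec y\in\pi}\frac{I(\vec y)}{\|\vec y\|_{1,d}}$: one must verify that enlarging the feasible set from $\pi$ to an arbitrary bounded closed $T$ contained in $\{\vec x\ne 0: 0\in\median(\vec x)\}$ does not decrease the optimal ratio. This follows because for any $\vec y\in T$ with $0\in\median(\vec y)$, the rescaled vector $\vec y/\|\vec y\|_{1,d}\in\pi$ by Theorem~\ref{th:median}, and the ratio $\frac{I(\cdot)}{\|\cdot\|_{1,d}}$ is invariant under positive scaling, so the infimum over $T$ is attained already within $\pi$. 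A secondary technical point is ensuring the iterates do not degenerate to $\vec 0$; this is handled cleanly by exploiting the scale-invariance of Eqs.~\eqref{eq:twostep_x}--\eqref{eq:twostep_lam} to assume $\|\vec x^k\|_{1,d}=1$ throughout, which also makes the compactness argument immediate.
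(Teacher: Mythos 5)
Your proposal is correct and follows essentially the same route as the paper: monotonicity of $\lambda^k$ from Lemma~\ref{lm:x1}, then identification of the limit $\lambda^*$ by showing $\min_{\vec x\in T}\{I(\vec x)-\lambda^*\|\vec x\|_{1,d}\}=0$ and reducing $\min_{\vec x\in T}I(\vec x)/\|\vec x\|_{1,d}$ to $\min_{\vec x\in\pi}I(\vec x)$ via positive scale-invariance of the ratio and of the median condition. The only (cosmetic) difference is that you extract a convergent subsequence of the iterates and pass to the limit in the optimality inequality, whereas the paper works with the value function $g(\lambda)=\min_{\vec x\in T}\{I(\vec x)-\lambda\|\vec x\|_{1,d}\}$, noting its continuity and that $g(\lambda^k)=\|\vec x^{k+1}\|_{1,d}(\lambda^{k+1}-\lambda^k)\to 0$; both hinge on the compactness of $T$, which also disposes of your worry about iterates degenerating to $\vec 0$ since $0\notin T$.
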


\begin{proof}
According to Lemma \ref{lm:x1}, $\lambda^k\ge \lambda^{k+1}\ge 0$ for any $k\in \mathbb{N}^+$, so there exists $\lambda^*\ge 0$ such that $\lim\limits_{k\to+\infty}\lambda^k=\lambda^*$.
By the definition of $T$, we know that $\vec x\in T$ implies $\frac{x}{\|\vec x\|_{1,d}}\in \pi$, and $\min\limits_{\vec x\in \pi}I(\vec x)=\min\limits_{\vec x\in T}\frac{I(\vec x)}{\|\vec x\|_{1,d}}$. Next we will prove that $\lambda^*=\min\limits_{\vec x\in T}\frac{I(\vec x)}{\|\vec x\|_{1,d}}$.

Denote $g(\lambda)=\min\limits_{\vec x\in T}I(\vec x)-\lambda\|\vec x\|_{1,d}$. It is easy to see that $g(\lambda)$ is continuous on $\mathbb{R}$ since $T$ is compact. Hence, we have $$g(\lambda^*)=\lim\limits_{k\to+\infty}g(\lambda^k)
=\lim\limits_{k\to+\infty} \|\vec x^{k+1}\|_{1,d}
(\lambda^{k+1}-\lambda^k)=0,$$
which implies that
$$
I(\vec x)-\lambda^* \|\vec x\|_{1,d} \ge 0 \Leftrightarrow I(\vec x/\|\vec x\|_{1,d}) = I(\vec x) / \|\vec x\|_{1,d} \ge \lambda^*$$
holds for all $\vec x\in T$.
\end{proof}

However, in solving the minimization problem~\eqref{eq:twostep_x}, the first difficulty is the nonlinear constraint $0\in\median(\vec x)$ (\ie $\vec x\in\pi$ by Theorem~\ref{th:median}).
Taking the complicated geometry of $\pi$ into account, one often performs the minimization in the larger space $X$, and then projects the resulting solution into $\pi$ through a simple operation: $x-\alpha \vec{1}$, where $\alpha\in \median(\vec x)$~\cite{SzlamBresson2010,HeinBuhler2010}.

The second difficulty comes from the non-differentiability of the objective function in Eq.~\eqref{eq:twostep_x}.  The following results show that explicit subgradient methods hardly work.

\begin{lemma}
Let $f$ be a Lipschitz function and $\vec x^0\in int(dom(f))$. Then the subgradient $\partial f(\vec x^0)$ is a nonempty compact convex set. Let $$\vec v^0=\argmin\{\|\vec v\|:\vec v\in  \partial f(\vec x^0)\}.$$
Then we have (1) $f(\vec x^0-t\vec v^0)\le f(\vec v^0)$ for any sufficiently small $t>0$; (2) if $0\not\in\partial f(\vec x^0)$, $f(\vec x^0-t\vec v^0)< f(\vec v^0)$ for any sufficiently small $t>0$.
\end{lemma}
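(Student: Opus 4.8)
The plan is to identify $\partial f$ with the Clarke generalized gradient of the locally Lipschitz function $f$ and to invoke the max-formula for Clarke's generalized directional derivative; the vector $\vec v^0$ will be recognized as the metric projection of the origin onto $\partial f(\vec x^0)$, and the descent property will then drop out of the projection characterization.

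First I would recall the standard facts about the Clarke generalized gradient (see Clarke, \emph{Optimization and Nonsmooth Analysis}): since $f$ is Lipschitz near $\vec x^0\in\mathrm{int}(\mathrm{dom}(f))$, the set $\partial f(\vec x^0)$ is nonempty, convex and compact, and the generalized directional derivative
\[
f^\circ(\vec x^0;\vec h):=\limsup_{\vec y\to\vec x^0,\; s\downarrow 0}\frac{f(\vec y+s\vec h)-f(\vec y)}{s}
\]
admits the representation $f^\circ(\vec x^0;\vec h)=\max_{\vec v\in\partial f(\vec x^0)}\langle \vec v,\vec h\rangle$ for every $\vec h\in\mathbb{R}^n$. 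Because $\partial f(\vec x^0)$ is a nonempty closed convex set, its minimum-norm element $\vec v^0$ exists and is unique, and it coincides with the metric projection of $0$ onto $\partial f(\vec x^0)$.

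Next I would extract the key inequality from the projection property. The variational characterization of the projection gives $\langle 0-\vec v^0,\vec v-\vec v^0\rangle\le 0$ for all $\vec v\in\partial f(\vec x^0)$, which rearranges to $\langle \vec v,\vec v^0\rangle\ge\|\vec v^0\|^2$; since equality holds at $\vec v=\vec v^0$, we obtain $\min_{\vec v\in\partial f(\vec x^0)}\langle \vec v,\vec v^0\rangle=\|\vec v^0\|^2$. Feeding $\vec h=-\vec v^0$ into the max-formula then yields
\[
f^\circ(\vec x^0;-\vec v^0)=\max_{\vec v\in\partial f(\vec x^0)}\langle \vec v,-\vec v^0\rangle=-\min_{\vec v\in\partial f(\vec x^0)}\langle \vec v,\vec v^0\rangle=-\|\vec v^0\|^2\le 0,
\]
with strict inequality precisely when $\vec v^0\ne 0$, i.e. when $0\notin\partial f(\vec x^0)$.

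Finally I would pass from the directional derivative to an honest decrease along the ray $t\mapsto \vec x^0-t\vec v^0$. Choosing the base point $\vec y=\vec x^0$ in the $\limsup$ defining $f^\circ$ shows that $\limsup_{t\downarrow 0}\bigl(f(\vec x^0-t\vec v^0)-f(\vec x^0)\bigr)/t\le f^\circ(\vec x^0;-\vec v^0)=-\|\vec v^0\|^2$. If $0\in\partial f(\vec x^0)$ then $\vec v^0=0$ and statement (1) is the trivial identity $f(\vec x^0-t\vec v^0)=f(\vec x^0)$; if $0\notin\partial f(\vec x^0)$ then $\|\vec v^0\|^2>0$, so there is $\bar t>0$ with $f(\vec x^0-t\vec v^0)-f(\vec x^0)<-\tfrac{t}{2}\|\vec v^0\|^2<0$ for all $0<t<\bar t$, which gives statement (2) and, a fortiori, statement (1). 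I expect the only delicate points to be the appeal to Clarke's max-formula (which simultaneously supplies nonemptiness, compactness, convexity, and the representation of $f^\circ$) and the remark that the upper Dini derivative of $f$ at $\vec x^0$ is dominated by $f^\circ(\vec x^0;\cdot)$; the degenerate case $\vec v^0=0$ must simply be separated out by hand.
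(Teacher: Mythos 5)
The paper states this lemma without proof---it is invoked as a standard fact from nonsmooth analysis and used only to motivate why explicit subgradient steps need care---so there is no in-paper argument to compare against. Your proof is correct and is precisely the argument one would expect: reading $\partial f$ as the Clarke generalized gradient supplies nonemptiness, compactness and convexity; the max-formula $f^\circ(\vec x^0;\vec h)=\max_{\vec v\in\partial f(\vec x^0)}\langle\vec v,\vec h\rangle$ combined with the variational characterization of the projection of $0$ onto $\partial f(\vec x^0)$ gives $f^\circ(\vec x^0;-\vec v^0)=-\|\vec v^0\|^2$; and since the upper Dini quotient along the ray is dominated by $f^\circ(\vec x^0;-\vec v^0)$, a negative value of the latter forces $f(\vec x^0-t\vec v^0)-f(\vec x^0)<-\tfrac{t}{2}\|\vec v^0\|^2$ for \emph{all} sufficiently small $t>0$, not merely along a sequence---a point you handle correctly via the definition of $\limsup$. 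Two minor remarks: the occurrences of $f(\vec v^0)$ on the right-hand sides of (1) and (2) in the statement are evidently typographical errors for $f(\vec x^0)$, and you have (rightly) proved the corrected version; and your explicit separation of the degenerate case $\vec v^0=0$, where (1) reduces to an identity, is necessary and is done properly. The identification of the unnamed ``subgradient'' with the Clarke generalized gradient is also the right call here, since $F=I/\|\cdot\|_{1,d}$ is nonconvex and the paper itself appeals to the Clarke derivative elsewhere.
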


\begin{theorem}\label{th:explicit_subg}
Let $\vec x^0\in \pi$ and $\lambda^0=F(\vec x^0)$, where the function $F$ is defined in~\eqref{eq:I1F}. Assume
\begin{align}
\vec v^0
&=\left(\sum_j z_{ij}^0-\lambda^0d_ic_i^0\right)_i
\nonumber \\
&=\argmin\limits_{z_{ij}+z_{ji}=0} \left\{\sum_i\left(\sum_j z_{ij}-\lambda^0d_ic_i\right)^2: z_{ij}\in \sgn(x_i-x_j), c_i\in \sgn(x_i)\right\},\label{eq:subg}
\end{align}
then we have $F(\vec x^0-t\vec v^0)\le F(\vec x^0)$ for any sufficiently small $t>0$. Moreover, if $\vec x^0$ is not a critical point of $F$, then we have $F(\vec x^0-t\vec v^0)<  F(\vec x^0)$ for any sufficiently small $t>0$.
\end{theorem}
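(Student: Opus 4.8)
The plan is to pass from the quotient $F=I/\|\cdot\|_{1,d}$ to the affine combination
\[
g(\vec x):=I(\vec x)-\lambda^0\|\vec x\|_{1,d},
\]
exactly as in Lemmas~\ref{lm:x0} and~\ref{lm:x1}. Since $\lambda^0=F(\vec x^0)$ we have $g(\vec x^0)=0$, and since $\|\vec x^0\|_{1,d}>0$ the map $t\mapsto\|\vec x^0-t\vec v^0\|_{1,d}$ stays positive for $t$ near $0$, so that for such $t$
\[
F(\vec x^0-t\vec v^0)=\lambda^0+\frac{g(\vec x^0-t\vec v^0)}{\|\vec x^0-t\vec v^0\|_{1,d}} .
\]
Thus both conclusions reduce to the single claim: $g(\vec x^0-t\vec v^0)\le0$ for all sufficiently small $t>0$, with strict inequality when $\vec x^0$ is not a critical point.

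Write $\psi(\vec x):=\|\vec x\|_{1,d}$. Both $I$ and $\psi$ are finite convex functions, with convex subdifferentials $\partial I(\vec x^0)=\Delta_1\vec x^0$ and $\partial\psi(\vec x^0)=D\,\sgn(\vec x^0)$ --- in coordinates, precisely the two families of vectors $\big(\sum_{j\sim i}z_{ij}\big)_i$ and $(d_ic_i)_i$ appearing in \eqref{eq:subg}. Hence the $\argmin$ in \eqref{eq:subg} is taken over the convex compact set $W:=\partial I(\vec x^0)-\lambda^0\,\partial\psi(\vec x^0)$, i.e.\ $\vec v^0$ is the Euclidean projection of the origin onto $W$. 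Moreover $g=I-\lambda^0\psi$ is a difference of convex functions, hence directionally differentiable with $g(\vec x^0+t\vec u)=g(\vec x^0)+t\,g'(\vec x^0;\vec u)+o(t)$ as $t\downarrow0$ and $g'(\vec x^0;\vec u)=I'(\vec x^0;\vec u)-\lambda^0\psi'(\vec x^0;\vec u)$, where $I'(\vec x^0;\cdot)$ and $\psi'(\vec x^0;\cdot)$ are the support functions of $\partial I(\vec x^0)$ and $\partial\psi(\vec x^0)$.

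The heart of the argument is the estimate $g'(\vec x^0;-\vec v^0)\le-\|\vec v^0\|_2^2$. It follows from the projection inequality $\langle\vec w,\vec v^0\rangle\ge\|\vec v^0\|_2^2$, valid for every $\vec w\in W$: writing $\vec w=\xi-\lambda^0\eta$ with $\xi\in\partial I(\vec x^0)$, $\eta\in\partial\psi(\vec x^0)$, taking the infimum over $\xi$ and then the supremum over $\eta$, bounding the leftover $\inf_\eta\langle\eta,\vec v^0\rangle$ by $\sup_\eta\langle\eta,\vec v^0\rangle=\psi'(\vec x^0;\vec v^0)$, and using $\lambda^0=F(\vec x^0)\ge0$, one obtains $g'(\vec x^0;-\vec v^0)=-\inf_\xi\langle\xi,\vec v^0\rangle+\lambda^0\inf_\eta\langle\eta,\vec v^0\rangle\le-\|\vec v^0\|_2^2$. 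Substituting into the first-order expansion gives $g(\vec x^0-t\vec v^0)\le-t\|\vec v^0\|_2^2+o(t)$, which is $\le0$ for all small $t>0$ and $<0$ as soon as $\vec v^0\ne0$; by the reduction of the first paragraph, $F(\vec x^0-t\vec v^0)\le F(\vec x^0)$ for small $t>0$, with strict inequality whenever $\vec v^0\ne0$. (When $\vec x^0$ has no vanishing coordinate, $W=\partial_Cg(\vec x^0)$ and both conclusions also drop out directly from the lemma preceding the theorem, with $\vec v^0$ the minimal-norm Clarke subgradient of $g$.)

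It remains to see that $\vec x^0$ not a critical point forces $\vec v^0\ne0$. Indeed $\vec v^0=0$ iff $0\in W$, i.e.\ iff $\Delta_1\vec x^0\cap\lambda^0 D\,\sgn(\vec x^0)\ne\varnothing$, which is exactly the statement that $(\lambda^0,\vec x^0)$ is an eigenpair of $\Delta_1$ --- equivalently, by Theorem~\ref{th:S(G)=K}, that $\vec x^0$ is a critical point. Hence if $\vec x^0$ is not critical then $\vec v^0\ne0$ and the inequality is strict. I expect the two points that need care to be: (i) the sign/support-function bookkeeping in the estimate $g'(\vec x^0;-\vec v^0)\le-\|\vec v^0\|_2^2$ (and, in the alternate route, the fact that $W\supseteq\partial_Cg(\vec x^0)$ with equality only when $\vec x^0$ has no zero coordinate); and (ii) making precise that ``critical point of $F$'' in the statement is to be read as $0\in\partial_Cg(\vec x^0)$, equivalently $(\lambda^0,\vec x^0)\in S(G)$ --- see the Remark following Theorem~\ref{th:S(G)=K} for why this is a genuine subtlety rather than a triviality.
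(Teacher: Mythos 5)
The paper states Theorem~\ref{th:explicit_subg} without proof: it is offered as an apparent corollary of the preceding (also unproved) lemma on the minimal-norm Clarke subgradient being a descent direction. There is therefore no line-by-line comparison to make, but your argument is a correct, self-contained proof of the descent inequality, and it is in one respect more careful than the derivation the paper implies. The set $W=\partial I(\vec x^0)-\lambda^0\,\partial\|\cdot\|_{1,d}(\vec x^0)$ over which \eqref{eq:subg} minimizes only \emph{contains} the Clarke subdifferential of $g=I-\lambda^0\|\cdot\|_{1,d}$ (equality can fail when $\vec x^0$ has zero coordinates), so the lemma does not apply verbatim to $\vec v^0$. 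Your direct computation --- $g'(\vec x^0;-\vec v^0)\le-\|\vec v^0\|_2^2$ from the projection inequality $\langle\vec w,\vec v^0\rangle\ge\|\vec v^0\|_2^2$ for all $\vec w\in W$, the support-function formulas for the two convex directional derivatives, and $\lambda^0\ge 0$ --- sidesteps this entirely; and since $I$ and $\|\cdot\|_{1,d}$ are piecewise linear the first-order expansion is even exact for small $t$, so no $o(t)$ bookkeeping is really needed. The reduction $F(\vec x^0-t\vec v^0)\le F(\vec x^0)\Leftrightarrow g(\vec x^0-t\vec v^0)\le 0$ is also sound because $\|\vec x^0-t\vec v^0\|_{1,d}>0$ for small $t$.

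The one place your proof does not deliver the theorem exactly as written is the strict-inequality clause, and you have correctly isolated why. What you actually prove is: strict decrease holds iff $\vec v^0\ne 0$, iff $0\notin W$, iff $(\lambda^0,\vec x^0)$ is \emph{not} an eigenpair of $\Delta_1$ (equivalently, by Theorem~\ref{th:S(G)=K}, $\vec x^0$ is not a critical point of $\tilde I$). If ``critical point of $F$'' is instead read in the Clarke sense --- the reading the paper's own Remark after Theorem~\ref{th:S(G)=K} and Appendix A make salient --- then the hypothesis ``not a critical point of $F$'' is strictly weaker than ``not an eigenvector'': Appendix A exhibits a point of $\pi$ that is an eigenvector yet not a Clarke critical point of $F$, and at such a point $\vec v^0=0$, so no strict decrease is possible and the ``Moreover'' clause would be false. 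So the theorem is true, and your argument proves it, only under the eigenvector reading of the hypothesis; this is a defect of the statement rather than of your proof. One small slip to correct: you call $0\in\partial_C g(\vec x^0)$ ``equivalent'' to $(\lambda^0,\vec x^0)\in S(G)$, but since $\partial_C g(\vec x^0)\subseteq W$ with possibly strict inclusion only the forward implication is automatic; this does not affect your argument, which runs entirely through $W$.
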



From Theorem~\ref{th:explicit_subg}, it seems that the decreasing direction $-\vec v^0$ should be our choice if the minimization problem~\eqref{eq:subg} is solved (In fact, it is also difficult) and then we expect that $F(\vec x^0-t\vec v^0)< F(\vec x^0)$ for a steplength $t^0>0$ if $\vec x^0$ is not a critical point.
However, the direction $-\vec v^0$ may not be acceptable. The reason is twofold: First, $\vec x^1=\vec x^0-t^0\vec v^0\in\pi$ is not guaranteed; Second, after projecting $\vec x^1$ onto $\pi$, \ie $\tilde{\vec x}^1=\vec x^1-\median(\vec x^1)$,
$F(\tilde{\vec x}^1)\le F(\vec x^0)$ may not hold.
In view of this, we turn to the implicit subgradient methods. The basic requirement is to ensure $F(\tilde{\vec x}^1)\le F(\vec x^0)$. To this end, a clever relaxation introduced in~\cite{HeinBuhler2010} is used.

\begin{lemma}\label{lemma3}
Let $(\cdot,\cdot)$ denote the standard inner product in $\mathbb{R}^n$.
Then we have
(1) $(u,\vec x)=\|\vec x\|_{1,d}$, $\forall u\in\partial \|\vec x\|_{1,d}$, $\forall \vec x\in\mathbb{R}^n$;
(2) $(u,\vec y)\le\|\vec y\|_{1,d}$, $\forall u\in \partial \|\vec x\|_{1,d}$, $\forall x, \vec y \in\mathbb{R}^n$.
\end{lemma}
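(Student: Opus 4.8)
The plan is to exploit the fact that $\|\cdot\|_{1,d}$ is a convex, positively $1$-homogeneous function on $\mathbb{R}^n$ (a weighted $\ell^1$-norm, $\|\vec x\|_{1,d}=\sum_{i=1}^n d_i|x_i|$), finite and continuous everywhere, so that $\partial\|\vec x\|_{1,d}$ is a nonempty compact convex set for every $\vec x$ and the quantified claims are not vacuous. First I would record the elementary, separable description of the subdifferential: since $\vec x\mapsto\sum_i d_i|x_i|$ splits coordinatewise and $\partial|t|=\sgn(t)$, one has $u\in\partial\|\vec x\|_{1,d}$ if and only if $u_i\in d_i\,\sgn(x_i)$ for each $i$; in particular $|u_i|\le d_i$ always, and $u_i x_i=d_i|x_i|$ for every $i$ (the product being $0$ when $x_i=0$).

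Given this description, (1) is immediate: $(u,\vec x)=\sum_i u_i x_i=\sum_i d_i|x_i|=\|\vec x\|_{1,d}$. For (2), $(u,\vec y)=\sum_i u_i y_i\le\sum_i|u_i|\,|y_i|\le\sum_i d_i|y_i|=\|\vec y\|_{1,d}$, using $|u_i|\le d_i$.

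An alternative route, which avoids computing $\partial\|\cdot\|_{1,d}$ explicitly and is the one I would actually present, argues from the subgradient inequality alone. For $u\in\partial\|\vec x\|_{1,d}$ and any $\vec w$, $\|\vec w\|_{1,d}\ge\|\vec x\|_{1,d}+(u,\vec w-\vec x)$. Taking $\vec w=2\vec x$ and using $1$-homogeneity gives $\|\vec x\|_{1,d}\ge(u,\vec x)$; taking $\vec w=\vec 0$ and using $\|\vec 0\|_{1,d}=0$ gives $(u,\vec x)\ge\|\vec x\|_{1,d}$; hence $(u,\vec x)=\|\vec x\|_{1,d}$, which is (1). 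Substituting this back into the subgradient inequality with $\vec w=\vec y$ yields $\|\vec y\|_{1,d}\ge\|\vec x\|_{1,d}+(u,\vec y)-(u,\vec x)=(u,\vec y)$, which is (2).

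There is essentially no obstacle here: the lemma merely records that a subgradient of a $1$-homogeneous convex function is a supporting functional that realizes the function value at the base point and is dominated by the function elsewhere. The only point requiring minor care is the bookkeeping with the weights $d_i$ and with the coordinates where $x_i=0$, on which $\sgn$ is genuinely set-valued; I would dispose of this once in the description of $\partial\|\vec x\|_{1,d}$ (or once in deriving (1) via the homogeneity trick), after which the two estimates follow in a single line each.
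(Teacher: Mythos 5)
Your proof is correct. Note, however, that the paper states Lemma~\ref{lemma3} without any proof at all (it is invoked as a known fact underlying the relaxation of Hein and B\"uhler), so there is no in-paper argument to compare against. Both of your routes are sound: the coordinatewise computation correctly identifies $\partial\|\vec x\|_{1,d}=\{u: u_i\in d_i\,\sgn(x_i)\}$, from which (1) follows since $u_ix_i=d_i|x_i|$ in every case (including $x_i=0$) and (2) follows from $|u_i|\le d_i$. Your second route is the cleaner one and the one worth keeping: it uses only the subgradient inequality together with positive $1$-homogeneity and $\|\vec 0\|_{1,d}=0$, so it proves the general statement that any subgradient of a sublinear function supports it exactly at the base point and is dominated by it everywhere, without needing the separable structure of the weighted $\ell^1$-norm. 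The only cosmetic caveat is that the degrees $d_i$ could in principle vanish on isolated vertices, making $\|\cdot\|_{1,d}$ only a seminorm; neither argument is affected, since then $u_i=0$ is forced and the estimates go through unchanged.
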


The term $\|\vec x\|_{1,d}$ is replaced by the inner produce $(\vec v^k,\vec x)$, where $\vec v^k\in\partial \|\vec x^k\|_{1,d}$
(this relaxation enlarges the objective function),
the two-step iterative scheme~\eqref{eq:twostep_x}-\eqref{eq:twostep_lam} turns out to be
\begin{numcases}{}
\vec x^{k+1}=\argmin\limits_{x\ne 0, 0\in\median(\vec x)} I(\vec x)-\lambda^k (\vec v^k,\vec x), \;\;\; \vec v^k\in\partial \|\vec x^k\|_{1,d}, \label{eq:twostep_x1}
\\
\lambda^{k+1}=F(\vec x^{k+1}),
\label{eq:twostep_lam1}
\end{numcases}
and then we can easily recover the four-step iterative scheme~\cite{HeinBuhler2010} as follow:
\begin{equation}\label{eq:4step}
\begin{cases}
\vec x^{k+1}=\argmin\limits_{\|\vec x\|_2\leq 1} I(\vec x)-\lambda^k (\vec v^k,\vec x), \;\;\; \vec v^k\in\partial \|\vec x^k\|_{1,d},
\\
\vec x^{k+1}=\vec x^{k+1}-\alpha\cdot \vec{1}, \alpha\in \median(\vec x^{k+1}),
\\
\text{Choose}\; \vec v^{k+1}\in\partial \|\vec x^{k+1}\|_{1,d}\; \text{such that}\; (\vec v^{k+1},\vec{1})=0,\\
\lambda^{k+1}=F(\vec x^{k+1}).
\end{cases}
\end{equation}

\begin{theorem}[Theorem 4.1 in \cite{HeinBuhler2010}]
For given $\vec x^0\in\pi$ and $\vec v^0\in\partial\|\vec x^0\|_{1,d}$ satisfying $(\vec v^0,\vec{1})=0$, $\vec x^1$ is generated by the scheme~\eqref{eq:4step}. Then we have $\vec x^1\in\pi$ and $F(\vec x^1)\le F(\vec x^0)$.
\end{theorem}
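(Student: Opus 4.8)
The plan is to verify the two conclusions $\vec x^1\in\pi$ and $F(\vec x^1)\le F(\vec x^0)$ separately, exploiting the structure of the four-step scheme~\eqref{eq:4step} and the relaxation lemmas already in hand. For membership in $\pi$, the key observation is that after the second line of~\eqref{eq:4step} we have subtracted $\alpha\vec 1$ with $\alpha\in\median(\vec x^1)$, so $0\in\median(\vec x^1)$ by the translation-invariance of the median, and then Theorem~\ref{th:median} gives $\vec x^1\in\pi$ directly (here I use that $\vec x^1$ is nonconstant, which follows because $I(\vec x^1)-\lambda^0(\vec v^0,\vec x^1)$ is strictly negative at a suitable competitor unless $\vec x^0$ is already an eigenvector, a degenerate case that can be handled separately). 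So the $\pi$-membership is essentially a one-line consequence of the projection step plus Theorem~\ref{th:median}.

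For the monotonicity $F(\vec x^1)\le F(\vec x^0)$, I would chain together the three relaxations. First, $\vec x^0$ itself is a feasible competitor in the first line of~\eqref{eq:4step}: since $\|\vec x^0\|_{1,d}=1$ we may rescale so that $\|\vec x^0\|_2\le 1$, or argue that the constrained minimum over $\{\|\vec x\|_2\le1\}$ of the positively-homogeneous-degree-one functional $I(\vec x)-\lambda^0(\vec v^0,\vec x)$ is attained and is $\le 0$ because the value at $\vec 0$ is $0$. Hence $I(\vec x^1_{\text{pre}})-\lambda^0(\vec v^0,\vec x^1_{\text{pre}})\le I(\vec x^0)-\lambda^0(\vec v^0,\vec x^0)=I(\vec x^0)-\lambda^0\|\vec x^0\|_{1,d}=0$ by Lemma~\ref{lemma3}(1). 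Next, translating by $\alpha\vec 1$ does not increase $I$ (in fact $I$ is translation invariant) and, because $(\vec v^0,\vec 1)=0$, it does not change $(\vec v^0,\vec x)$ either; so the inequality $I(\vec x^1)-\lambda^0(\vec v^0,\vec x^1)\le 0$ survives the projection step. Finally, Lemma~\ref{lemma3}(2) gives $(\vec v^0,\vec x^1)\le\|\vec x^1\|_{1,d}$, whence $I(\vec x^1)\le\lambda^0(\vec v^0,\vec x^1)\le\lambda^0\|\vec x^1\|_{1,d}$, i.e. $F(\vec x^1)=I(\vec x^1)/\|\vec x^1\|_{1,d}\le\lambda^0=F(\vec x^0)$, provided $\|\vec x^1\|_{1,d}>0$.

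The main obstacle, and the step that needs the most care, is ruling out the degenerate cases where the argument stalls: namely $\vec x^1_{\text{pre}}=\vec 0$ after the first minimization (so that the division in the final step is illegitimate), or $\|\vec x^1\|_{1,d}=0$ after projection. The first cannot happen because, if $\vec x^0$ were the unique minimizer giving value $0$, one could still take $\vec x^1=\vec x^0$ and the argument closes; more robustly, the minimum over the unit $\ell^2$-ball of a nonzero linear-growth convex-minus-linear functional that is $\le0$ somewhere on the unit sphere is attained on the boundary, so $\|\vec x^1_{\text{pre}}\|_2=1\ne0$. The second degenerate case, $\vec x^1\in\operatorname{span}\{\vec 1\}$, would force $I(\vec x^1)=0$ and then $\vec x^0$ is already optimal (a zero of $F$ restricted to its own cell, hence an eigenvector with $\mu=0$), so the inequality $F(\vec x^1)\le F(\vec x^0)$ holds trivially in the limiting sense; in all other cases $\|\vec x^1\|_{1,d}>0$ and the chain of inequalities above yields the claim. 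Assembling these pieces gives the theorem.
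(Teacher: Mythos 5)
Your proof is correct and follows essentially the same route the paper takes: the paper states this result without a standalone proof (deferring to the cited reference), but your chain of inequalities --- competitor $\vec x^0$ plus Lemma~\ref{lemma3}(1) to get a nonpositive minimum, translation invariance of $I$ together with $(\vec v^0,\vec 1)=0$ to survive the median shift, then Lemma~\ref{lemma3}(2) to conclude $F(\vec x^1)\le\lambda^0$, with Theorem~\ref{th:median} giving $\vec x^1\in\pi$ --- is exactly the argument embedded in the paper's proofs of Theorem~\ref{th:gsd} and Theorem~\ref{th:cd1convergence}. Your extra care with the degenerate cases ($\vec x^1$ constant or zero) goes slightly beyond what the paper records, but does not change the approach.
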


\begin{remark}~
\begin{itemize}
\item In \cite{HeinBuhler2010}, the iterative scheme~\eqref{eq:4step} is called the inverse power method, which is an extension of the inverse power method for linear systems. While a different view presented here is to regard it as some kind of relaxation introduced in Lemma~\ref{lemma3}.

\item In the scheme~\eqref{eq:4step}, it should be noted that the $2$-norm is only chosen for algorithmic convenience and does not make any effect on the final results because
the function $F(\vec x)$ is homogeneous of degree zero.

\item Furthermore, we can also add an extra positive term to make the induced optimization problem bounded from below. This raises the following generalized steepest descent (GSD) method.
\end{itemize}
\end{remark}

\begin{theorem}\label{th:gsd}
For given $\vec x^0\in\pi$ and $\vec v^0\in\partial\|\vec x^0\|_{1,d}$ satisfying $(\vec v^0,\vec{1})=0$,
let $G_{\vec x^0}(\vec x)$ be a convex function on $\mathbb{R}^n$ and $\vec x^0$ is the minimum of $G_{\vec x^0}(\vec x)$.
Assume
\begin{equation}\label{eq:gsd}
\vec x^1=\argmin\limits_{\vec x\in \mathbb{R}^n}I(\vec x)-\lambda^0(\vec v^0,\vec x)+\lambda^0 G_{\vec x^0}(\vec x),
\end{equation}
 then $F(\vec x^1)\le F(\vec x^0)$. Moreover, $F(\tilde{x}^1)\le F(\vec x^0)$ holds for $\tilde{x}^1=\vec x^1-\alpha\vec{1}$ with $\alpha\in\median(\vec x^1)$.
\end{theorem}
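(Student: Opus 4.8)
The plan is to mimic the proof of Theorem 4.1 in \cite{HeinBuhler2010} (the one preceding this statement), with the extra convex term $\lambda^0 G_{\vec x^0}(\vec x)$ only helping: it vanishes at the comparison point $\vec x^0$ and is nonnegative elsewhere, so it never spoils the key inequality. First I would record the elementary facts I will use: by Lemma~\ref{lemma3}(1), $(\vec v^0,\vec x^0)=\|\vec x^0\|_{1,d}$ and $(\vec v^0,\vec y)\le\|\vec y\|_{1,d}$ for all $\vec y$; by hypothesis $G_{\vec x^0}(\vec x^0)=\min_{\vec x}G_{\vec x^0}(\vec x)$, so $G_{\vec x^0}(\vec x)\ge G_{\vec x^0}(\vec x^0)=:c_0$ for all $\vec x$, and I may assume $c_0=0$ (replace $G_{\vec x^0}$ by $G_{\vec x^0}-c_0$; this changes the objective in \eqref{eq:gsd} by the constant $\lambda^0 c_0$ and hence not the argmin). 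Also $I$ is homogeneous of degree one and $\|\cdot\|_{1,d}$ is homogeneous of degree one, while $F=I/\|\cdot\|_{1,d}$ is homogeneous of degree zero; I will exploit that freedom to normalize.

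Next I would establish $F(\vec x^1)\le F(\vec x^0)=\lambda^0$. Evaluating the objective in \eqref{eq:gsd} at $\vec x=\vec x^0$ gives the upper bound $I(\vec x^0)-\lambda^0(\vec v^0,\vec x^0)+\lambda^0 G_{\vec x^0}(\vec x^0)=I(\vec x^0)-\lambda^0\|\vec x^0\|_{1,d}+0=0$, using Lemma~\ref{lemma3}(1) and the definition $\lambda^0=F(\vec x^0)$. Since $\vec x^1$ is the minimizer,
\[
I(\vec x^1)-\lambda^0(\vec v^0,\vec x^1)+\lambda^0 G_{\vec x^0}(\vec x^1)\le 0.
\]
Now $G_{\vec x^0}(\vec x^1)\ge 0$ and, by Lemma~\ref{lemma3}(2), $(\vec v^0,\vec x^1)\le\|\vec x^1\|_{1,d}$; dropping the (nonnegative) $G$ term and using the latter bound yields $I(\vec x^1)-\lambda^0\|\vec x^1\|_{1,d}\le I(\vec x^1)-\lambda^0(\vec v^0,\vec x^1)\le 0$, hence $F(\vec x^1)=I(\vec x^1)/\|\vec x^1\|_{1,d}\le\lambda^0=F(\vec x^0)$ (here one notes $\vec x^1\ne 0$: if $\vec x^1=0$ the objective value would be $0$, not strictly less, and in any degenerate case one falls back on $\vec x^0$ itself — this boundary bookkeeping is a minor point to dispatch).

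For the second assertion, set $\tilde{\vec x}^1=\vec x^1-\alpha\vec{1}$ with $\alpha\in\median(\vec x^1)$. By Theorem~\ref{th:median}, $0\in\median(\tilde{\vec x}^1)$, i.e. $\tilde{\vec x}^1\in\cone(\pi)$, which is exactly the place where $F$ computes the Cheeger-type quotient. Here I would invoke the characterization in \eqref{th:Chung_d}, $\|\tilde{\vec x}^1\|_{1,d}=\|\vec x^1-\alpha\vec{1}\|_{1,d}=\min_{t\in\mathbb{R}}\|\vec x^1-t\vec{1}\|_{1,d}\ge(\vec v^0,\vec x^1-\alpha\vec{1})=(\vec v^0,\vec x^1)-\alpha(\vec v^0,\vec{1})=(\vec v^0,\vec x^1)$, where the inequality is Lemma~\ref{lemma3}(2) applied to $\vec y=\tilde{\vec x}^1$ and the last equality uses the hypothesis $(\vec v^0,\vec{1})=0$. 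Since $I$ is translation-invariant along $\vec{1}$ (every term $|x_i-x_j|$ is unchanged by $\vec x\mapsto\vec x-\alpha\vec{1}$), $I(\tilde{\vec x}^1)=I(\vec x^1)$. Combining, $F(\tilde{\vec x}^1)=I(\vec x^1)/\|\tilde{\vec x}^1\|_{1,d}\le I(\vec x^1)/(\vec v^0,\vec x^1)$, and from the minimizer inequality above together with $G_{\vec x^0}(\vec x^1)\ge 0$ we get $I(\vec x^1)\le\lambda^0(\vec v^0,\vec x^1)$, so $F(\tilde{\vec x}^1)\le\lambda^0=F(\vec x^0)$.

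The main obstacle is not any single step — each is a short application of Lemma~\ref{lemma3} and translation invariance — but rather handling the degenerate cases cleanly: ensuring $\vec x^1\ne 0$ and $(\vec v^0,\vec x^1)>0$ so the quotients make sense, and checking that the minimum in \eqref{eq:gsd} is actually attained (this is where the coercive term $\lambda^0 G_{\vec x^0}$ earns its keep, since $I(\vec x)-\lambda^0(\vec v^0,\vec x)$ alone is unbounded below as one scales $\vec x$). I would dispose of these by the homogeneity/normalization remarks at the outset, so that the inequalities above are the whole content of the proof.
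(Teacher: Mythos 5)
Your proof is correct, and it reaches the conclusion by a genuinely different (and more elementary) route than the paper. The paper starts from the first-order optimality condition $0\in \partial I(\vec x^1)-\lambda^0\vec v^0+\lambda^0\partial G_{\vec x^0}(\vec x^1)$, picks $g^1\in\partial G_{\vec x^0}(\vec x^1)$ with $\lambda^0(\vec v^0-g^1)\in\partial I(\vec x^1)$, and chains the subgradient inequalities for $I$ and $G$ together with the minimality of $\vec x^0$ for $G_{\vec x^0}$ to obtain $I(\vec x^0)-I(\vec x^1)\ge\lambda^0(\vec v^0,\vec x^0-\vec x^1)$; you instead compare the objective values of \eqref{eq:gsd} at $\vec x^1$ and at $\vec x^0$ directly, which gives the equivalent inequality $I(\vec x^1)-\lambda^0(\vec v^0,\vec x^1)\le I(\vec x^0)-\lambda^0(\vec v^0,\vec x^0)=0$ after discarding the nonnegative (normalized) $G$-term. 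From that point on the two arguments coincide: Lemma~\ref{lemma3} converts $(\vec v^0,\cdot)$ into $\|\cdot\|_{1,d}$ bounds, and the translation step uses $I(\vec x^1-\alpha\vec 1)=I(\vec x^1)$ together with $(\vec v^0,\vec 1)=0$. Your version buys simplicity --- no subdifferential sum rule, no selection of $g^1$, only the definition of $\argmin$ --- and it makes transparent exactly where each hypothesis enters ($G_{\vec x^0}(\vec x)\ge G_{\vec x^0}(\vec x^0)$ is used once, to drop the $G$-term). The paper's version is phrased so that it applies verbatim to any stationary point of the relaxed objective, though for a convex objective this is no real gain in generality. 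Your closing remarks about the degenerate cases ($\vec x^1=0$ or $(\vec v^0,\vec x^1)\le 0$) address bookkeeping that the paper also leaves implicit, so they do not constitute a gap relative to the published argument.
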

\begin{proof}
By the conditions, we know that $\vec x^1$ is a critical point of $I(\vec x)-\lambda^0(\vec v^0,\vec x)+\lambda^0 G_{\vec x^0}(\vec x)$, \ie
$$0\in \partial I(\vec x^1) -\lambda^0 \vec v^0+\lambda^0 \partial G_{\vec x^0}(\vec x^1),$$ so
$\lambda^0(\vec v^0-\partial G_{\vec x^0}(\vec x^1))\cap\partial I(\vec x^1)\ne\emptyset$. Let $g^1\in \partial G_{\vec x^0}(\vec x^1)$ such that $\lambda^0(\vec v^0-g^1)\in \partial I(\vec x^1) $, then we have
\begin{align*}
I(\vec x^0)-I(\vec x^1)&\ge \lambda^0(\vec v^0-g^1,\vec x^0-\vec x^1)
\\&=\lambda^0(\vec v^0,\vec x^0-\vec x^1)-\lambda^0(g^1,\vec x^0-\vec x^1)
\\&\ge
\lambda^0(\vec v^0,\vec x^0-\vec x^1)-(G_{\vec x^0}(\vec x^0)-G_{\vec x^0}(\vec x^1))
\\&=\lambda^0(\vec v^0,\vec x^0-\vec x^1)+G_{\vec x^0}(\vec x^1)-G_{\vec x^0}(\vec x^0)
\\&\ge
\lambda^0(\vec v^0,\vec x^0-\vec x^1).
\end{align*}

Then taking $\vec x^1=\vec x^1-\alpha\vec{1}$ with $\alpha\in\median(\vec x^1)$ leads to
\begin{align*}
I(\vec x^0)-I(\vec x^1)&\ge\lambda^0(\vec v^0,\vec x^0-\vec x^1)
\\&=\lambda^0
(\vec v^0,\vec x^0)-\lambda^0 (\vec v^0,\vec x^1)
\\&\ge \lambda^0 \|\vec x^0\|_{1,d}-\lambda^0
\|\vec x^1\|_{1,d}
\\&=I(\vec x^0)-\lambda^0 \|\vec x^1\|_{1,d}
\end{align*} Therefore, we get
$I(\vec x^1)-\lambda^0 \|\vec x^1\|_{1,d}\le0$, \ie
$\lambda^1=\frac{I(\vec x^1)}{\|\vec x^1\|_{1,d}}\le\lambda^0$.
\end{proof}

Theorem~\ref{th:gsd} implies that a further relaxation to enlarge the objective function can be allowed and the following iterative scheme, \ie the GSD method,  should also work,
\begin{equation}\label{eq:4step-gsd}
\begin{cases}
\vec x^{k+1}=\argmin\limits_{\vec x\in\mathbb{R}^n} I(\vec x)-\lambda^k (\vec v^k,\vec x) + \lambda^k G_{\vec x^k}(\vec x), \;\;\; \vec v^k\in\partial \|\vec x^k\|_{1,d},
\\
\vec x^{k+1}=\vec x^{k+1}-\alpha\vec{1}, \alpha\in \median(\vec x^{k+1}),
\\
\text{Choose}\; \vec v^{k+1}\in\partial \|\vec x^{k+1}\|_{1,d}\; \text{such that}\; (\vec v^{k+1},\vec{1})=0,\\
\lambda^{k+1}=F(\vec x^{k+1}).
\end{cases}
\end{equation}

\begin{remark}~
\begin{itemize}
\item  Taking $G_{\vec x^k}(\vec x)=\frac{\|x-\vec x^k\|^2_2+c^2 \|\vec v^k\|_2^2+2c(\vec x^k,\vec v^k)}{2c}$ for $c>0$,
the scheme~\eqref{eq:4step-gsd} reduces to
the SD algorithm~\cite{BressonLaurentUminskyBrecht2012}. Then the updatation of $\vec x^{k+1}$ becomes
\begin{equation}\label{eq:sd}
\vec x^{k+1}=\argmin\limits_{\vec x\in\mathbb{R}^n} I(\vec x)+\frac{\lambda^k}{2c}\|x-(\vec x^k+c\vec v^k)\|_2^2,
\end{equation}
which is nothing but the so-called proximal gradient often used in implicit subgradient methods \cite{ChambollePock2011,BressonLaurentUminskyBrecht2012,Ma2012}.
\item The relaxation from Eq.~\eqref{eq:twostep_x} to Eq.~\eqref{eq:twostep_x1} is more essential than that
from Eq.~\eqref{eq:twostep_x1} to Eq.~\eqref{eq:gsd}.
\end{itemize}
\end{remark}

In summary, by exploiting the idea of relaxing the objective function for searching the descending direction in $\pi$, adopted by either IP or SD,
we obtain two kinds of CD methods: CD1 and CD2. In CD1,
the inner problem appeared in the first line of Eq.~\eqref{eq:4step} is devised in choosing a descending direction, and then the next cell,
while the inner problem~\eqref{eq:sd} is for CD2.
The skeleton of CD methods is given in Algorithm~\ref{alg:cd}, where the CD1 method is taken as an example.

\begin{algorithm}[htpb]
\SetKwInOut{Input}{Input}
\SetKwInOut{Output}{Output}

\Input{The initial cell $\triangle^1\in\Pi$.}
\Output{The cell $\triangle^\ast$ containing the minimizer $\vec x^\ast$ with the minimum $I^\ast$, the number of traversed cells $\#_{\triangle}$.}
Set $k=1$ and $I(\vec x^0)=1$\;
\While{True}
{
Solve $\vec x^{k} = \argmin\limits_{\vec x\in\overline{\triangle^{k}}} I(\vec x)$\;
\If{$I(\vec x^{k}) > I(\vec x^{k-1})$}
{
\Return $\triangle^\ast\gets \triangle^{k-1}$, $\vec x^\ast\gets \vec x^{k-1}$, $I^\ast\gets I(\vec x^{k-1})$, $\#_\triangle \gets k$\;
 Exit the loop\;
}
Choose $\vec v^{k}\in\partial\|\vec x^{k}\|_{1,d}$ such that $(\vec v^k,\vec{1})=0$, \ie
$v_i^k =
\begin{cases}
d_i \sign(x^k_i), & \text{if } x^k_i\ne 0\\
\frac{\delta^-(\vec x^k)-\delta^+(\vec x^k)}{\delta^0(\vec x^k)}, & \text{if } x^k_i= 0
\end{cases}
$\;
Solve $\vec y^k = \argmin\limits_{\|\vec x\|_2\leq 1} I(\vec x) - F(\vec x^k)(\vec v^k,\vec x)$\;\label{line9}
Set $\vec y^k\gets \vec y^k-\median(\vec y^k)$\;\label{line}
Set $\triangle^{k+1} = \triangle_{\sign(\vec y^k)}$\;
\If{$\triangle^{k+1}\in\{\pm\triangle^1,\pm\triangle^2,\cdots,\pm\triangle^k\}$}
{
\Return $\triangle^\ast\gets \triangle^{k}$, $\vec x^\ast\gets \vec x^{k}$, $I^\ast\gets I(\vec x^{k})$, $\#_\triangle \gets k$\;
Exit the loop\;
}
Set $k \gets k+1$\;
}
\caption{\small The skeleton of cell descent algorithms.}
\label{alg:cd}
\end{algorithm}

\begin{theorem}
\label{th:cd1convergence}
The sequence $\{I(\vec x^k)\}$ produced by the CD1 method is decreasing and convergent to a local minimum. Furthermore, the sequence $\{\vec x^k\}$ produced by the CD1 method converges to an eigenvector of the 1-Laplacian with eigenvalue $\lambda^*\in [h(G), I(\vec x^0)]$.

\end{theorem}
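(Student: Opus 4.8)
The plan is to establish the theorem in three stages: monotone convergence of the scalar sequence $\{I(\vec x^k)\}$, identification of the limit cell and the limit vector, and verification that the limit vector is an eigenvector with the claimed eigenvalue bound. First I would argue that $\{I(\vec x^k)\}$ is nonincreasing. On passing from $\triangle^k$ to $\triangle^{k+1}$, the CD1 method solves the relaxed inner problem on line~\ref{line9} and projects onto $\pi$ via the median subtraction on line~\ref{line}; by Theorem~9 (the IP step, Theorem~4.1 of \cite{HeinBuhler2010}) the projected iterate $\vec y^k$ satisfies $F(\vec y^k)\le F(\vec x^k)=I(\vec x^k)$, and since $\vec y^k\in\overline{\triangle^{k+1}}$ we have $I(\vec x^{k+1})=\min_{\vec x\in\overline{\triangle^{k+1}}}I(\vec x)\le F(\vec y^k)\le I(\vec x^k)$. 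Being nonincreasing and bounded below by $h(G)>0$, the sequence $\{I(\vec x^k)\}$ converges; call the limit $\lambda^*$, and note $\lambda^*\ge h(G)$ while $\lambda^*\le I(\vec x^1)\le I(\vec x^0)=1$ by construction.

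Next I would exploit finiteness: there are only finitely many cells (the count $C_n^{k+1}2^{k+1}$ summed over $k$), and the termination test ``$\triangle^{k+1}\in\{\pm\triangle^1,\dots,\pm\triangle^k\}$'' guarantees the algorithm either halts in finitely many steps or — if we think of it as running without the stopping rule — must revisit a cell. Here I would argue that once the descent in $I$ has become stationary (i.e.\ $I(\vec x^{k+1})=I(\vec x^k)$ for all large $k$, which must happen since the values converge and, by the distance-$\tfrac{4}{n^2(n-1)^2}$ separation of eigenvalues in Theorem~\ref{properties}(2) together with the cell structure, the minimizer values on cells take finitely many possible values), the sequence $\{\vec x^k\}$ of per-cell minimizers stabilizes: the minimizer of the convex function $I$ on a fixed closed cell is determined up to the convex set of minimizers, and the transfer rule then either produces a strictly smaller value (contradiction) or returns to an already-visited cell, forcing $\vec x^{k+1}=\vec x^k=:\vec x^*$ eventually. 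Thus $\vec x^k\to\vec x^*$ with $I(\vec x^*)=\lambda^*\in[h(G),1]$.

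Finally I would show $\vec x^*$ is an eigenvector of $\Delta_1$. The key point is that at the fixed point, $\vec x^*$ minimizes $I$ over $\overline{\triangle^*}$, and the relaxed IP step launched from $\vec x^*$ yields a vector $\vec y^*$ with $\sign(\vec y^*)$ landing back in (a cell of) $\triangle^*$ (or $-\triangle^*$), so that $F(\vec y^*)=F(\vec x^*)$. By the equality-case analysis in the proof of Theorem~9 / Lemma~\ref{lemma3} (the inequalities $(\vec v^*,\vec x)\le\|\vec x\|_{1,d}$ become equalities, and the relaxation gap closes), this forces the subgradient inclusion $\lambda^* D\sgn(\vec x^*)\cap\Delta_1\vec x^*\ne\varnothing$, i.e.\ $(\lambda^*,\vec x^*/\|\vec x^*\|_{1,d})$ is an eigenpair; equivalently, by Theorem~\ref{th:S(G)=K}, a fixed point of the relaxed scheme inside its own cell is a critical point of $\tilde I$. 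Combined with $\vec x^*\in\pi$ (maintained by the median projection on every step) and Theorem~\ref{th:pi_key}, we get the eigenvalue bound $\lambda^*\in[h(G),I(\vec x^0)]$.

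The main obstacle I expect is the middle stage: rigorously ruling out that $\{\vec x^k\}$ oscillates among several cells all sharing the same minimal value $\lambda^*$ without ever settling. The descent-in-$I$ argument only gives $I(\vec x^k)\downarrow\lambda^*$, not immediate stabilization of the cells; one must combine the finite number of cells, the anti-cycling stopping rule (which forbids returning to $\pm\triangle^j$ for $j\le k$), and a careful argument that a genuine cell change accompanied by no decrease in $I$ can happen only finitely often — essentially, that the relaxed step from a per-cell minimizer either strictly decreases $F$ or certifies an eigenvector in the current cell. Making that dichotomy airtight, using the equality conditions in Lemma~\ref{lemma3} and Theorem~\ref{th:explicit_subg}, is the technical heart of the proof.
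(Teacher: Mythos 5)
Your first two stages match the paper's argument in substance: the chain $I(\vec x^0)\ge\cdots\ge F(\vec y^{k-1})\ge I(\vec x^k)\ge F(\vec y^k)\ge I(\vec x^{k+1})\ge\cdots\ge 0$ is exactly what the paper establishes (it reproves the descent of the relaxed step inline via Lemma~\ref{lemma3} rather than citing Theorem~4.1 of \cite{HeinBuhler2010}, but that is the same content), and the finiteness of cells in $\Pi$ is indeed what forces $I(\vec x^k)=F(\vec y^k)=\lambda^*$ for all $k\ge K$. Your worry about oscillation among cells of equal value is largely beside the point: the paper does not need the iterates to stabilize to a single vector, because its final argument shows that \emph{every} $\vec x^k$ with $k\ge K$ is already an eigenvector with eigenvalue $\lambda^*$ (and the anti-cycling rule terminates the run in finitely many steps anyway). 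Your invocation of the eigenvalue-separation bound from Theorem~\ref{properties}(2) is unnecessary for the same reason.

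The genuine gap is in your last stage. You assert that once $F(\vec y^*)=F(\vec x^*)$, ``the equality-case analysis in Lemma~\ref{lemma3}\dots forces the subgradient inclusion $\lambda^* D\sgn(\vec x^*)\cap\Delta_1\vec x^*\ne\varnothing$.'' That does not follow: knowing the inequalities $(\vec v^k,\vec x)\le\|\vec x\|_{1,d}$ are tight along the iterates tells you the value did not drop, but it gives you no stationarity condition at $\vec x^*$. The missing idea — and the crux of the paper's proof — is that once $I(\vec x^k)=F(\vec y^k)=\lambda^*$, the point $\vec x^k$ itself attains the minimum value (namely $0$) of the convex functional $\Phi(\vec x)=I(\vec x)-\lambda^k(\vec v^k,\vec x)$ over the ball $\{\|\vec x\|_2\le 1\}$: one checks $\Phi(\vec x^k)=I(\vec x^k)-\lambda^k\|\vec x^k\|_{1,d}=0$ by Lemma~\ref{lemma3}, while $\Phi(\vec x)\ge\Phi(\vec y^k)\ge I(\vec y^k)-\lambda^k\|\vec y^k\|_{1,d}=0$ for all $\vec x$ in the ball (this last equality is where $F(\vec y^k)=\lambda^k$ is used). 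Since $\|\vec x^k\|_2<\|\vec x^k\|_{1,d}=1$, the point $\vec x^k$ is an \emph{interior} minimizer of the convex function $\Phi$, hence $0\in\partial I(\vec x^k)-\lambda^k\vec v^k$, and $v^k_i\in d_i\sgn(x^k_i)$ turns this into the eigenpair equation. Without the observation that $\vec x^k$ is an unconstrained (interior) minimizer of the relaxed functional, your claim that the subgradient inclusion is ``forced'' is unsupported; also, your auxiliary claim that $\sign(\vec y^*)$ must land back in $\triangle^*$ or $-\triangle^*$ is neither needed nor justified.
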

\begin{proof}
Since $\frac{\vec y^{k-1}}{\|\vec y^{k-1}\|_{1,d}}\in \triangle^k$, we obtain
\begin{equation}
\label{eq:1}
F(\vec y^{k-1})=I\left(\frac{\vec y^{k-1}}{\|\vec y^{k-1}\|_{1,d}}\right)\ge \min\limits_{\vec x\in\overline{\triangle^k}}I(\vec x)= I(\vec x^k).
\end{equation}

For any $\alpha\in \median(\vec y^k)$,
we can readily arrive at
\begin{align*}
I(\vec y^k-\alpha\cdot \vec1)-F(\vec x^k)(\vec v^k,\vec y^k-\alpha\cdot \vec1)&=
I(\vec y^k)-F(\vec x^k)(\vec v^k,\vec y^k)
\\&\le I(\vec x^k)-F(\vec x^k)(\vec v^k,\vec x^k)
\\&=I(\vec x^k)-F(\vec x^k)\|\vec x^k\|_{1,d}=0,
\end{align*}
since $\vec y^k$ is a minimizer of $I(\vec x)-F(\vec x^k)(\vec v^k,\vec x)$ and $(\vec v^k,\vec1)=0$,
where the translation invariance of $I(\vec x)$ and  Lemma~\ref{lemma3} are applied.
Accordingly, after the updation of $\vec y^k\leftarrow \vec y^k-\alpha\cdot \vec1$ in Line~\ref{line} of Algorithm~\ref{alg:cd}, we have
\begin{equation}
\label{eq:2}
I(\vec y^k)\le F(\vec x^k)(\vec v^k,\vec y^k)\le F(\vec x^k)\|\vec y^k\|_{1,d} \Leftrightarrow F(\vec x^k)\ge {I(\vec y^k)}/{\|\vec y^k\|_{1,d}}=F(\vec y^k).
\end{equation}

Combining Eqs.~\eqref{eq:1} and \eqref{eq:2} leads to
$$
I(\vec x^0)\ge \cdots\ge F(\vec y^{k-1})\ge I(\vec x^k)\ge F(\vec y^k)\ge I(\vec x^{k+1})\ge\cdots \ge 0,
$$
where the fact that $I(\vec x^k)=F(\vec x^k)$ holds for $\vec x^k\in \overline{\triangle^k}\subset X$ has been used. That is, $I(\vec x^k)$ converges to $\lambda^*$ and
$\lambda^*\ge h(G)$ due to Theorem~\ref{th:chang2014}.
Below we will further show this $\lambda^*$ is actually an eigenvalue of $\Delta_1$.

Because there are only finite cells in $\Pi$, the decreasing sequence $\{I(\vec x^k)\}_{k=1}^\infty$ can only take finite different values. That means there exists $K\in\mathbb{Z}^+$ such that $I(\vec x^k)=F(\vec y^k)=\lambda^k=\lambda^*$ for any $k\ge K$. Furthermore, we claim that any $\vec x^k(k\ge K)$ must be the minimizer of $I(\vec x) - F(\vec x^k)(\vec v^k,\vec x)$ in the open ball $\{\vec x:\|\vec x\|_2< 1\}$. The reason is, (1) we have $\|\vec x^k\|_2^2 \leq \sum_{i=1}^n d_i |x_i^k|^2 <(\sum_{i=1}^n d_i |x_i^k|)^2= \|\vec x^k\|_{1,d}^2=1$, i.e. $\|\vec x^k\|_2 < 1$; (2) $I(\vec x^k) - F(\vec x^k)(\vec v^k,\vec x^k)=0$ holds due to Lemma~\ref{lemma3}; (3) for any $\vec x\in \{\vec x:\|\vec x\|_2< 1\}$, we have
$$
I(\vec x) - F(\vec x^k)(\vec v^k,\vec x)\ge I(\vec y^k) - F(\vec x^k)(\vec v^k,\vec y^k)\ge I(\vec y^k) - F(\vec x^k)\|\vec y^k\|_{1,d}=0,
$$
where we have used Line~\ref{line9} of Algorithm~\ref{alg:cd} and Lemma~\ref{lemma3}.
Therefore, $\vec x^k$ is a critical point of $I(\vec x) - F(\vec x^k)(\vec v^k,\vec x)$ and then
$$
0\in \partial I(\vec x) - F(\vec x^k)\partial (\vec v^k,\vec x)=\left(\sum_{j\sim i} \sgn(x_i-x_j)-\lambda^k  v^k_i\right)_{i=1}^n.
$$
Consequently, it follows from $v^k_i\in d_i\sgn(x_i)$ that $(\vec x^k,\lambda^k)$ is an eigenpair of $\Delta_1$. Hence we have $\lambda^*=\lambda^k$ is an eigenvalue.
\end{proof}

For the CD2 method, we still have a similar local convergence. In the CD1 method as shown in Algorithm~\ref{alg:cd},
the main cost is spent on two inner minimization problems. The first one is shown in Line 3 and the second one is shown in Line 9, both of which are convex and then can be solved by any standard solver for convex optimization, \eg the alternating direction method~\cite{BoydParikhChuPeleatoEckstein2011,Ma2012} and the MOSEK solver. The same situation applies to the CD2 method.

%
%
%

\section{Numerical experiments}
\label{sec:num}

Applying
the idea of relaxing the objective function in $\pi$,
a CD algorithm framework in solving the $1$-Laplacian Cheeger cut problem is proposed and two specified CD methods (\ie CD1 and CD2) are presented. The CD1 method adopts the inner problem Eq.~\eqref{eq:4step} to search the next cell,
while the CD2 method uses~\eqref{eq:sd}. Actually, we have shown that both IP and SD methods can be readily recovered from such relaxation.

\begin{figure}[htpb]
\centering
\subfigure[A Petersen graph on $10$ vertices.]{
\label{fig:typical:petersen}
\includegraphics[scale=0.35]{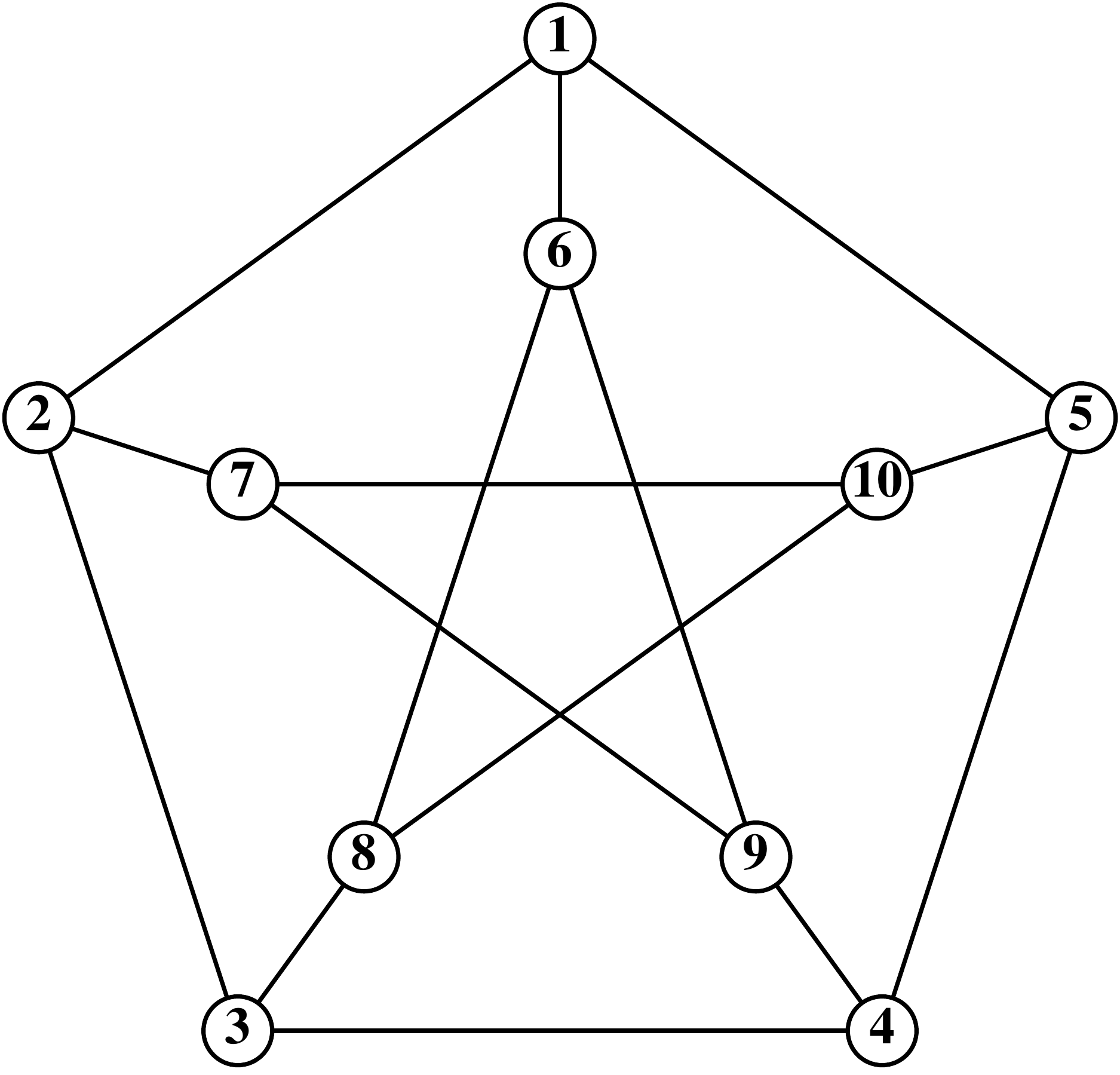}}
\subfigure[A path graph on $10$ vertices $P_{10}$.]{
\includegraphics[scale=0.65]{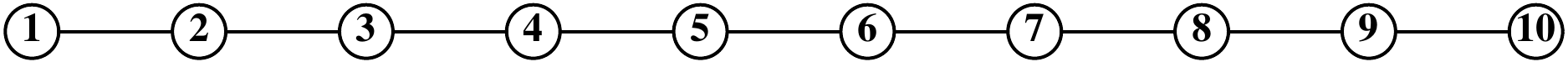}}
\subfigure[A roach Graph on $20$ vertices $R_{20}$.]{
\label{fig:typical:roach}
\includegraphics[scale=0.65]{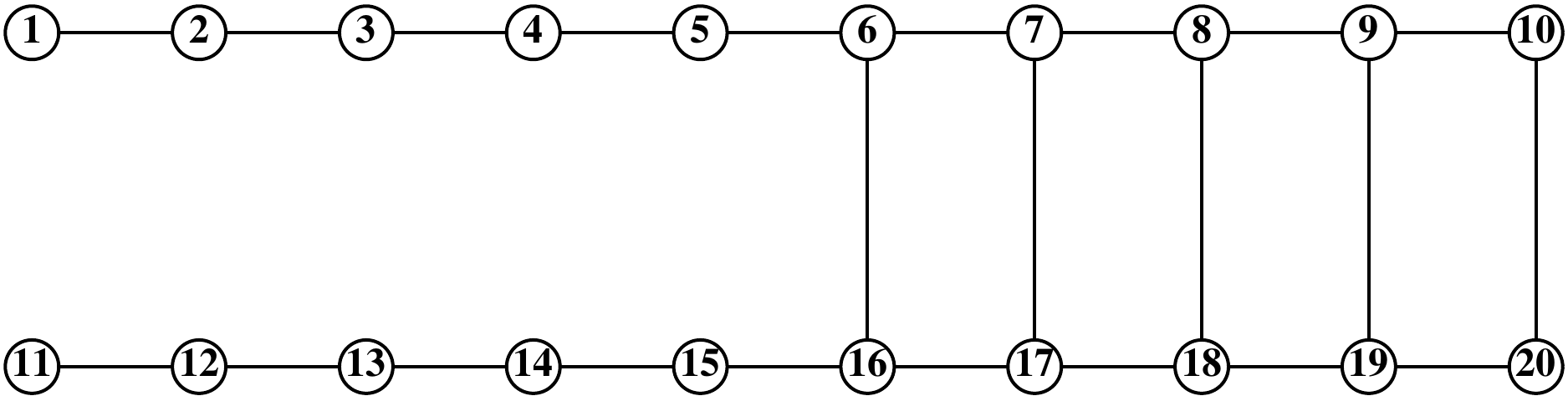}}
\caption{Several typical graphs.}
\label{fig:typical}
\end{figure}

In this section, we conduct a compare study of the CD1, CD2, IP and SD methods in terms of performance on several typical graphs, \eg the Petersen graph, the path graph (\eg $P_{10}$), the roach graph and the complete graph (\eg $K_{10}$). The Petersen graph has 10 vertices (see Fig.~\ref{fig:typical:petersen}) and the exact Cheeger value is $h(G)=\mu_2=\frac13$ while the second eigenvalue of the standard graph Laplacian is $\lambda_2=2\mu_2=\frac{2}{3}$. That is, the Cheeger value takes the lower bound of inequality (\ref{eq:mu_2&lambda_2}) on the Petersen graph, implying that the Petersen graph should be one of the most typical graphs on which the $2$-spectral clusering fails to determine the Cheeger cut.
Another interesting graph is the so-called roach graph~\cite{Luxburg2007}, which has $4k$ vertices (see Fig.~\ref{fig:typical:roach} for $k=5$, \ie a roach graph $R_{20}$) and is a typical graph that the $2$-spectral clusering fails, too. Four roach graphs of different size (\ie $R_{4k}$ with $k=2,3,4,5$) are tested.
That is, a comparison between the $1$- and $2$-Laplacian Cheeger cuts in the roach graphs makes no sense. Moreover, this work focuses on the $1$-Laplacian Cheeger cut, while its comparison with the $p$-spectral clusering ($1<p<2$) methods can be found in \cite{SzlamBresson2010,HeinBuhler2010}.

To solve all envolved convex inner problems as well as to make a fair comparison, we use the same MOSEK solver with CVX, a package for specifying and solving convex programs~\cite{cvx,gb08}. We set $c=1$ in the inner problem~\eqref{eq:sd} as in~\cite{BressonLaurentUminskyBrecht2012}.


\begin{table}
\centering
\caption{\small Performance of the IP, SD, CD1 and CD2 methods on typical graphs. Avergae number of iterations and the percent for the numerical Cheeger value $\tilde{h}$ reaching the exact Cheeger value $h$.}
\label{tab:htilde=h}
\newsavebox{\tablebox}
\begin{lrbox}{\tablebox}
\begin{tabular}{c|c|c|c|c|c|c|c|c}
\hline
\multirow{2}{*}{Graph} & \multicolumn{4}{c|}{Average No. of iterations} & \multicolumn{4}{c}{Percent for $\tilde{h}=h$}  \\
\cline{2-9}
& IP & SD & CD1 & CD2 & IP & SD & CD1 & CD2  \\
\hline
Petersen & 2.889 &	4.925	 & 2.746 &	2.789 &	33.1 & 32.4 & 37& 31 \\
\hline
$P_{10}$ &	3.984 & 7.438	&	2.774 &	3.474 &	79.8 &	76.5 &	98.6 & 86.1 \\
\hline
$K_{10}$ & 2.435 &	4.277& 3.191  &	3.128 &	61.1 & 85.3 &	100&100 \\
\hline
$R_8$ & 3.216	 &5.794 & 2.753 &2.727 &	43.2 & 44.6 & 82.4&51.7 \\\hline
$R_{12}$ & 3.481 &6.521	&	2.722 &	2.989 &	97.3 &	95.9 &	100& 98.9 \\\hline
$R_{16}$ &	3.887 &	9.957&	2.843 &	2.826 &	49.1 &	52.1 &	52.7&42.3 \\
\hline
$R_{20}$ & 4.239&10.782	&	3.087  &	3.02 &	52.7 &	55.7 &	57.7&47.9\\
\hline
\end{tabular}
\end{lrbox}
\scalebox{0.8}{\usebox{\tablebox}}
\end{table}

\begin{table}
\centering
\caption{\small Comparison of the numerical Cheeger values.}
\label{tab:compare}
\begin{lrbox}{\tablebox}
\begin{tabular}{c|c|c|c|c|c|c}
\hline
\multirow{3}{*}{Graph} & \multicolumn{3}{c|}{IP \vs CD1} & \multicolumn{3}{c}{SD \vs CD2}\\
\cline{2-7}
& \multicolumn{3}{c|}{Percent for $\tilde{h}_{IP}\lesseqqgtr \tilde{h}_{CD1}$}
& \multicolumn{3}{c}{Percent for $\tilde{h}_{SD}\lesseqqgtr \tilde{h}_{CD2}$} \\
\cline{2-7}
& $\tilde{h}_{IP} < \tilde{h}_{CD1}$ & $\tilde{h}_{IP} > \tilde{h}_{CD1}$ & $\tilde{h}_{IP} = \tilde{h}_{CD1}$ &
$\tilde{h}_{SD} < \tilde{h}_{CD2}$ & $\tilde{h}_{SD} > \tilde{h}_{CD2}$ & $\tilde{h}_{SD} = \tilde{h}_{CD2}$
\\
\hline
Petersen & 0.8 &	11& 88.2 &	7.6 &	7.5 & 84.9  \\
\hline
$P_{10}$ &	0.4&	19.2&	80.4&	8&	18.8&	81.4	 \\
\hline
$K_{10}$ & 0&	38.9&	61.1&	0&	14.7& 85.3	 \\
\hline
$R_8$ & 0.5	&39.8&	59.7&	13.7&	28.2&	58.1 \\ \hline
$R_{12}$ & 0&	2.7&	97.3&	0.8&	3.8& 95.4\\ \hline
$R_{16}$ & 9.1&	14.4&	76.5&	20.1&	11.90&68	\\ \hline
$R_{20}$ & 10.3&	16&	73.7&	19.9&	13.3& 66.8	\\ \hline\hline
\multirow{3}{*}{Graph} & \multicolumn{3}{c|}{SD \vs CD1} & \multicolumn{3}{c}{CD1 \vs CD2}\\
\cline{2-7}
& \multicolumn{3}{c|}{Percent for $\tilde{h}_{SD}\lesseqqgtr \tilde{h}_{CD1}$}
& \multicolumn{3}{c}{Percent for $\tilde{h}_{CD1}\lesseqqgtr \tilde{h}_{CD2}$} \\
\cline{2-7}
& $\tilde{h}_{SD} < \tilde{h}_{CD1}$ & $\tilde{h}_{SD} > \tilde{h}_{CD1}$ & $\tilde{h}_{SD} = \tilde{h}_{CD1}$ &
$\tilde{h}_{CD1} < \tilde{h}_{CD2}$ & $\tilde{h}_{CD1} > \tilde{h}_{CD2}$ & $\tilde{h}_{CD1} = \tilde{h}_{CD2}$
\\
\hline
Petersen & 7.3 &9& 83.7  &	8.4 &	6.4 & 85.2  \\
\hline
$P_{10}$ &	0.4& 22.5&	77.1 &	12.5&	0&	87.5	 \\
\hline
$K_{10}$ & 0&	14.7& 85.3	 &	0&	0&	100 \\
\hline
$R_8$ & 2.9	&40.5& 56.6 & 	32.7& 2.2&	65.1 \\ \hline
$R_{12}$ &  0&	4.1& 95.9	 &	1.1  &	0& 98.9\\ \hline
$R_{16}$ & 12.2&	15.7&	72.1&	13.8&	1.8&	84.4\\ \hline
$R_{20}$ & 11.7&15.1&	73.2&	14.4&	4&81.6	\\
\hline\hline
\multirow{3}{*}{Graph} & \multicolumn{3}{c|}{SD \vs IP} & \multicolumn{3}{c}{IP \vs CD2}\\
\cline{2-7}
& \multicolumn{3}{c|}{Percent for $\tilde{h}_{SD}\lesseqqgtr \tilde{h}_{IP}$}
& \multicolumn{3}{c}{Percent for $\tilde{h}_{IP}\lesseqqgtr \tilde{h}_{CD2}$} \\
\cline{2-7}
& $\tilde{h}_{SD} < \tilde{h}_{IP}$ & $\tilde{h}_{SD} > \tilde{h}_{IP}$ & $\tilde{h}_{SD} = \tilde{h}_{IP}$ &
$\tilde{h}_{IP} < \tilde{h}_{CD2}$ & $\tilde{h}_{IP} > \tilde{h}_{CD2}$ & $\tilde{h}_{IP} = \tilde{h}_{CD2}$
\\
\hline
Petersen & 12.4 &3.6& 84  &	5.9 &	14.6 & 79.5  \\
\hline
$P_{10}$ &	11.4 & 15&	 73.6 &	8.8&	16.2&	75	 \\
\hline
$K_{10}$ & 30.2&	6&  63.4	 &	0&	38.9&	61.1 \\
\hline
$R_8$ &  13.7	&12.2&  74.1 & 	10.2& 19&	70.8 \\ \hline
$R_{12}$ &  0.6&	2& 97.4	 &	0.8  &	2.4& 96.8\\ \hline
$R_{16}$ & 10.6&	8.6&	80.8&	17.4&	10.7&	71.9\\ \hline
$R_{20}$ & 10&8.1&	81.9&	16.7&	12.5&70.8	\\ \hline
\end{tabular}
\end{lrbox}
\scalebox{0.8}{\usebox{\tablebox}}
\end{table}

During all the experiments,
we start from the same $1000$ initial random cuts and record the numbers of iterations as well as the resulting numerical Cheeger values. Let $\tilde{h}$ denote the numerical Cheeger value. Table~\ref{tab:htilde=h} shows the percent for  $\tilde{h}$ equalling the exact Cheeger value $h$ ($h$ equals to $\frac13,\frac19,\frac59,\frac13,\frac15,\frac18,\frac{1}{11}$ for the Petersen graph, $P_{10}$, $K_{10}$, $R_8$, $R_{12}$, $R_{16}$ and $R_{20}$, respectively) and the average number of iterations required in those four methods. From there
it is easily seen that: (a) the SD method needs more iterations than the other and its average number of iterations increases quickly on the roach graph as the graph size increases; (b) both CD1 and CD2 methods require almost the same average numbers of iterations on all seven graphs, which are also less than those for the IP method; (c) the CD1 method has the highest percent to reach the exact Cheeger value.
Table~\ref{tab:compare} further presents
the pairwise comparison of the numerical Cheeger values obtained by the IP, SD, CD1 and CD2 methods.
Overall, the CD1 method provides the best cut among those four methods.
For example, on the Petersen graph, the percent for $\tilde{h}_{IP}<\tilde{h}_{CD1}$ is far less than that for $\tilde{h}_{IP}>\tilde{h}_{CD1}$, which means, within the 1000 numerical Cheeger values from the same $1000$ initial random cuts,
the CD1 method produces much more smaller numerical Cheeger values than the IP method. In such sense, we have the order on the Petersen graph: CD1 $>$ SD $\approx$ CD2 $>$ IP. In summary, the CD1 method has the better performance in terms of both the number of iterations and the quality of cut, but the CD2 method does not show clear advantage in the quality of cut.
That is, while using the the CD framework to design a specified algorithm, as we expected, the strategy for searching next cell plays a key role. The numerical results here show that the strategy using the inner problem in Eq.~\eqref{eq:4step} is better than that using the inner problem~\eqref{eq:sd}.

%
%
%
%
%
%
%
%
%
%


\section{Conclusions and discussions}
\label{sec:con}

In this paper, we present a systematic review of the theory of
the $1$-Laplacian Cheeger cut, where the spectrum of the $1$-Laplacian $\Delta_1$, the property of the feasible set $\pi$ and the conncetion between the Cheeger value $h(G)$ and the second eigenvalue $\mu_2$ of $\Delta_1$ are studied in details. Taking advantage of the cell structure of $\pi$, we propose a cell descend (CD) algorithm framework for achieving the Cheeger cut. Combined with
the relaxation to guarantee
the descrease of the objective value in $\pi$, we obtain two specified CD methods. Moreover, we also show that the inverse power (IP) method and the steepest descent (SD) method can also be generally recovered from such relaxation. A compare study of all these methods are conducted on several typical graphs. We find that,  the CD1 method, which adopts the inner problem in Eq.~\eqref{eq:4step} to search the next cell, performs better than both IP and SD methods.
We must admit that all the numerical results presented in this work are preliminary since the size of the graphs is small, but precisely because of small size of those typical graphs the exact cuts can be obtained conveniently for comparison. The performance of our CD algorithm framework on graphs with relatively large size is still going on. Furthermore, developping theory and algorithms for multi-class ($\geq 3$) cut and designing other strategies than the relaxation for searching next cell will also be our furture projects.

\section*{Acknowledgement}
This research was supported by grants from the National Natural Science Foundation of China (Nos. 11371038, 11471025, 11421101, 61121002).

\section*{Appendix A: An example}


The critical points of $\frac{I(\cdot)}{\|\cdot\|_{1,d}}$ in the sense of the Clarke derivative \cite{HeinBuhler2010} are actually not equivalent to the eigenvectors of $\Delta_1$. In this Appendix, we give an example. Let $G=(V,E)$ be the following graph.
\begin{center}
\includegraphics{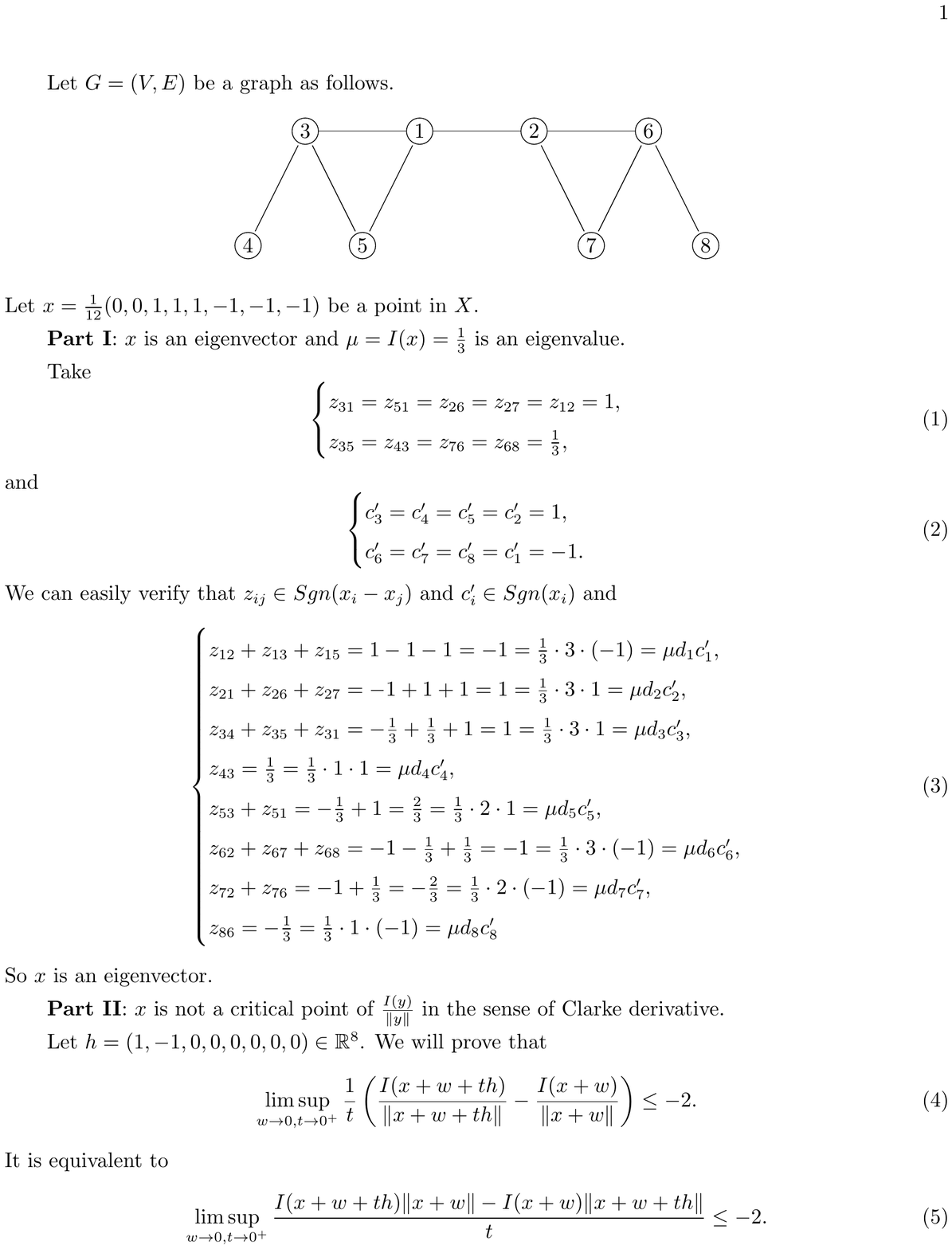}
\end{center}
Consider the point
$$
\vec x_0=\frac{1}{12}(0,0,1,1,1,-1,-1,-1) \in X.
$$
We can easily check that $\vec x_0$ is an eigenvector of $\Delta_1$ and the corresponding eigenvalue is $\mu=\frac13$. However,
for the direction
$$
\vec h_0=(1,-1,0,0,0,0,0,0)\in \mathbb{R}^8,
$$
we have
\begin{equation*}\label{eq:main1}
\limsup\limits_{\vec w\to \vec 0,t\to 0^+}
\frac{1}{t}\left(\frac{I(\vec x_0+ \vec w+t \vec h)}{\|\vec x_0+\vec w+t \vec h\|_{1,d}}-\frac{I(\vec x_0+\vec w)}{\|\vec x_0+\vec w\|_{1,d}}\right)\le -2.
\end{equation*}
That is, $\vec x_0$ is not a critical point $\frac{I(\cdot)}{\|\cdot\|_{1,d}}$ in the sense of the Clarke derivative.

\end{document}